\newtheorem{theorem}{Theorem}[section]
\newtheorem{proposition}[theorem]{Proposition}
\newtheorem{corollary}[theorem]{Corollary}
\newtheorem{lemma}[theorem]{Lemma}
\theoremstyle{definition}
\newtheorem{remark}{Remark}[section]
\newtheorem{example}{Example}[section]
\numberwithin{equation}{section}
\newcommand{\RR}{{\mathbb{R}}}
\newcommand{\ZZ}{{\mathbb{Z}}}
\newcommand{\argmin}{\mathop{\rm argmin}\limits}
\DeclareMathOperator*{\argmax}{\mathrm{argmax}}
\newcommand{\dom}{\mathop{\rm dom}\limits}
\newcommand{\dmid}{\mu}
\newcommand{\vecone}{\bm{1}}
\newcommand{\veczero}{\bm{0}}
\newcommand{\finbox}{\hspace*{\fill}$\rule{0.2cm}{0.2cm}$}
\begin{document}

\title{Directed Discrete Midpoint Convexity}

\author{Akihisa Tamura\thanks{
Akihisa Tamura:
Dept. of Math., Keio University, Japan,
\texttt{aki-tamura@math.keio.ac.jp}
}
\and Kazuya Tsurumi\thanks{
Kazuya Tsurumi:
Dept. of Math., Keio University, Japan,
\texttt{goro56koropappa0@keio.jp}
}}

\date{20 January 2020}

\maketitle

\begin{abstract}
For continuous functions, midpoint convexity characterizes convex
functions.
By considering discrete versions of midpoint convexity, several types of
discrete convexities of functions, including integral convexity, 
L$^\natural$-convexity and global/local discrete midpoint convexity, 
have been studied.
We propose a new type of discrete midpoint convexity that lies between 
L$^\natural$-convexity and integral convexity and is independent of
global/local discrete midpoint convexity.
The new convexity, named DDM-convexity, has nice properties
satisfied by L$^\natural$-convexity and 
global/local discrete midpoint convexity.
DDM-convex functions are stable under scaling, satisfy 
the so-called parallelgram inequality and a proximity theorem
with the same small proximity bound as
that for L$^{\natural}$-convex functions.
Several characterizations of DDM-convexity are given and
algorithms for DDM-convex function minimization are developed.
We also propose DDM-convexity in continuous variables and 
give proximity theorems on these functions.
\end{abstract}

\textbf{Keywords}:
midpoint convexity, discrete midpoint convexity,
 integral convexity, L$^{\natural}$-convexity,
 proximity theorem, scaling algorithm

%%%%%%%%%%%%%%%%%%%%%%%%%%%%%%%%%%%%%%%%%%%%%%%%
\section{Introduction}\label{intro}

For a continuous function $f$ defined on a convex set 
$S \subseteq\RR^n$, it was proved by Jensen~\cite{Jen0506} that
{\em midpoint convexity} defined by
\[
 f(x) + f(y) \geq 2 f\left(\frac{x+y}{2}\right) \qquad
 (\forall x,y \in S)
\]
is equivalent to the inequality defining convex functions
\[
 \alpha f(x) + (1-\alpha)f(y) \geq f(\alpha x + (1-\alpha) y) \qquad
 (\forall x,y \in S;\; \forall \alpha \in [0,1]).
\]
By capturing the concept of midpoint convexity, several types of
`discrete' midpoint convexities for functions defined 
on the integer lattice $\ZZ^n$ have been proposed.

A weak version of `discrete' midpoint convexity is obtained by
replacing $f((x+y)/2)$ by the smallest value of a linear extension of
$f$ among the integer points neighboring $(x+y)/2$.
More precisely, for any point $x \in \RR^n$, we consider
its integer neighborhood
\[
 N(x) = \{ z \in \ZZ^n \mid |z_i-x_i| < 1 \;(i=1,,\ldots,n)\}
\]
and the set $\Lambda(x)$ of all coefficients
$(\lambda_z \mid z \in N(x))$ 
for convex combinations indexed by $N(x)$.
For a function $f:\ZZ^n \to \RR \cup \{+\infty\}$, 
we define the local convex envelope $\tilde{f}$ of $f$ by
\[
 \tilde{f}(x) = \min\left\{ \sum_{z \in N(x)}\lambda_z f(z) \mid
  \sum_{z \in N(x)}\lambda_z z = x,\; (\lambda_z) \in \Lambda(x)\right\}
  \qquad (x \in \RR^n).
\]
We say that $f$ satisfies {\em weak discrete midpoint convexity} if
the following inequality holds
\begin{equation}\label{weakDMC}
 f(x) + f(y) \geq 2 \tilde{f} \left(\frac{x+y}{2}\right)
\end{equation}
for all $x,y \in \ZZ^n$.
On the other hand, 
$f$ is said to
be {\em integrally convex} \cite{favati-tardella}
if $\tilde{f}$ is convex on $\RR^n$.
Characterizations of integral convexity by using 
weak discrete midpoint convexity
have been discussed in \cite{favati-tardella,MMTT2019,MMTT-DMC}.
The simplest characterization, Theorem A.1 in \cite{MMTT-DMC}, says that 
$f$ is integrally convex if and only if $f$ satisfies (\ref{weakDMC}) 
for all $x,y \in \dom f$ with\footnote{
If $\|x-y\|_{\infty} \leq 1$, then (\ref{weakDMC}) obviously holds.}
$\|x-y\|_{\infty} \geq 2$,
where the effective domain $\dom f$ of $f$ is defined by
\[
 \dom f = \{ x \in \ZZ^n \mid f(x) < +\infty\}.
\]

The class of integrally convex functions establishes a general framework
of discrete convex functions, including separable convex, 
L$^\natural$-convex, M$^\natural$-convex, L$^\natural_2$-convex,
M$^\natural_2$-convex functions \cite{DCA},
BS-convex and UJ-convex functions \cite{Fuj2014}, and
globally/locally discrete midpoint convex functions \cite{MMTT-DMC}.
The concept of integral convexity is used in formulating
discrete fixed point theorems \cite{Iim10,IMT05,Yan09fixpt},
designing algorithms for discrete systems of 
nonlinear equations \cite{LTY11nle,Yan08comp}, and
guaranteeing the existence of a pure strategy equilibrium in
finite symmetric games \cite{IW14}.

A strong version of `discrete' midpoint convexity is obtained by
replacing $f((x+y)/2)$ by the average of the values of $f$ at two
integer points obtained by rounding-up and rounding-down of 
all components of $(x+y)/2$.
More precisely, 
for a function $f:\ZZ^n \to \RR \cup \{+\infty\}$, 
we say that $f$ satisfies {\em discrete midpoint convexity} if 
it has
\begin{equation}\label{DMC}
 f(x) + f(y) \geq 
   f\left(\left\lceil  \frac{x+y}{2}  \right\rceil\right) +
   f\left(\left\lfloor  \frac{x+y}{2}  \right\rfloor\right)
\end{equation}
for all $x,y \in \ZZ^n$, where $\lceil \cdot \rceil$ and
$\lfloor \cdot \rfloor$ denote the integer vectors obtained by
rounding up and rounding down all components of a given real vector,
respectively.
It is known that discrete midpoint convexity characterizes
the class of L$^\natural$-{\em convex functions} \cite{FM00,Mdca98}
which play important roles in both theoretical and practical aspects.
L$^\natural$-convex functions are applied to several fields, including
auction theory~\cite{LLM06,MSY16auction},
image processing~\cite{KS09lnatmin},
inventory theory~\cite{Chen2017,SCB14,Zip08} and
scheduling~\cite{BQ11}.
Since discrete midpoint convexity (\ref{DMC}) obviously implies
weak discrete midpoint convexity (\ref{weakDMC}), 
L$^\natural$-convex functions forms a subclass of
integrally convex functions.

Moriguchi et al.~\cite{MMTT-DMC} classified discrete convex functions
between L$^\natural$-convex and integrally convex functions in terms of
discrete midpoint convexity with $\ell_\infty$-distance requirements, 
and proposed two new classes of discrete convex functions, 
namely, globally/locally discrete midpoint convex functions.
A function $f:\ZZ^n \to \RR \cup \{+\infty\}$ is said to
be {\em globally discrete midpoint convex} if (\ref{DMC}) holds
for any pair $(x,y) \in \ZZ^n \times \ZZ^n$ with
$\|x-y\|_\infty \geq 2$.
A set $S \subseteq \ZZ^n$ is called
a {\em discrete midpoint convex set} if its indicator function $\delta_S$
defined by 
\[
\delta_S(x)=
\begin{cases}
0 & (x\in S)\\
+\infty & (x\not\in S)
\end{cases}
\qquad
(x \in \ZZ^n)
\]
is globally discrete midpoint convex, that is, if
\[
x, y\in S \mbox{ with } \|x-y\|_\infty \geq 2 \;\Rightarrow\;  
\left\lceil  \frac{x+y}{2}  \right\rceil,\; 
\left\lfloor  \frac{x+y}{2}  \right\rfloor \in S.
\]
A function $f:\ZZ^n \to \RR \cup \{+\infty\}$ is said to
be {\em locally discrete midpoint convex} if $\dom f$ is 
a discrete midpoint convex set and (\ref{DMC}) holds
for any pair $(x,y) \in \ZZ^n \times \ZZ^n$ with
$\|x-y\|_\infty = 2$.
It is shown in \cite{MMTT-DMC} that the following inclusion relations 
among function classes hold:
\begin{align*}
& \{ \mbox{ ${\rm L}\sp{\natural}$-convex } \}
\ \subsetneqq \
\{ \mbox{ globally discrete midpoint convex  } \} \\
& \subsetneqq \
\{ \mbox{ locally discrete midpoint convex } \}
\ \subsetneqq \
\{ \mbox{ integrally convex } \},
\end{align*}
and globally/locally discrete midpoint convex functions inherit 
nice features from L$^\natural$-convex functions, that is,
for a globally/locally discrete midpoint convex $f$ and 
a positive integer $\alpha$,
\begin{itemize}
\item
the scaled function $f^\alpha$ defined by 
$f^\alpha(x) = f(\alpha x)\; (x \in \ZZ^n)$
belongs to the same class, that is,
global/local discrete midpoint convexity is closed
with respect to scaling operations,
\item a proximity theorem with the same proximity distance with 
L$^\natural$-convexity holds, that is, given an $x^\alpha$ with
$f(x^\alpha) \leq f(x^\alpha + \alpha d)$ for all $d \in \{-1,0,+1\}^n$,
there exists a minimizer $x^*$ of $f$ with 
$\|x^\alpha -x ^*\|_\infty \leq n(\alpha-1)$,
\item when $f$ has a minimizer,
a steepest descent algorithm for the minimization of $f$ is developed 
such that the number of local minimizations in the neighborhood of
$\ell_\infty$-distance $2$ (the $2$-neighborhood minimizations) 
is bounded by the shortest $\ell_\infty$-distance 
from a given initial feasible point to a minimizer of $f$, and
\item when $\dom f$ is bounded and $K_\infty$ denotes
the $\ell_\infty$-size of $\dom f$, a scaling algorithm
minimizing $f$ with $O(n \log_2 K_\infty$) calls of
the $2$-neighborhood minimization is developed.
\end{itemize}

This paper, strongly motivated by \cite{MMTT-DMC}, proposes
a new type of discrete midpoint convexity between L$^\natural$-convexity
and integral convexity, but it is independent of
global/local discrete midpoint convexity with respect to inclusion relation.
We name the new convexity 
{\em directed discrete midpoint convexity (DDM-convexity)} which forms
the following classification
\[
 \{ \mbox{ ${\rm L}\sp{\natural}$-convex } \}
\ \subsetneqq \
\{ \mbox{ DDM-convex  } \} 
 \ \subsetneqq \
\{ \mbox{ integrally convex } \}.
\]
The same features as mentioned above are satisfied by DDM-convexity.
The merits of DDM-convexity relative to 
global/local discrete midpoint convexity are the following properties:
\begin{itemize}
\item 
DDM-convexity is closed with respect to individual sign inversion of 
variables, that is, for a DDM-convex function $f$ and 
$\tau_i \in \{+1,-1\}\;(i=1,\ldots,n)$, 
$f(\tau_1 x_1,\ldots,\tau_n x_n)$ is also DDM-convex (see
Proposition~\ref{prop:basic-operations} (3)).
Neither L$^\natural$-convexity nor 
global nor local discrete midpoint convexity has this property,
while integral convexity is closed with respect to individual sign inversion
of variables.
\item
For a quadratic function $f(x) = x^\top Q x$ with a symmetric matrix
$Q = [q_{ij}]$, DDM-convexity is characterized by 
the diagonal dominance with nonnegative diagonals of $Q$:
\[
 q_{ii} \geq \sum_{j:j \neq i}|q_{ij}| \qquad (\forall i =1,\ldots,n)
\]
(see Theorem~\ref{thm:diagdom-DDMC}).
While L$^\natural$-convexity is characterized by the combination of 
diagonal dominance with nonnegative diagonals and 
nonpositivity of all off-diagonal components of $Q$, 
global/local discrete midpoint convexity is independent of
the diagonal dominance with nonnegative diagonals.
\item
A function $g : \ZZ \to \RR \cup \{+\infty\}$ is said to be discrete convex
if $g(t-1)+g(t+1) \geq 2g(t)$ for all $t \in \ZZ$.
For univariate discrete convex functions
$\xi_{i}, \varphi_{ij}, \psi_{ij} : 
\ZZ \to \RR \cup \{+\infty\}\; 
(i=1,\ldots,n; j \in \{1,\ldots,n\} \setminus \{i\}$),
a {\em 2-separable convex function}~\cite{L-extendable} is defined as
a function represented as
\[
 f(x)=\sum_{i=1}^n \xi_i(x_i)+
 \sum_{i,j:j\not=i}\varphi_{ij} (x_i-x_j)+
 \sum_{i,j:j\not=i} \psi_{ij}(x_i+x_j)\qquad (x\in\ZZ^n).
\]
The class of DDM-convex functions includes 
all 2-separable convex functions (see Theorem~\ref{thm:2sep2ddmc}).
It is known that if all $\psi_{ij}$ are identically zero, then $f$ is
L$^\natural$-convex, whereas
there exists a 2-separable convex function not contained
in the class of globally/locally discrete midpoint convex functions.
\item
A steepest descent algorithm for the minimization of 
DDM-convex functions requires only the 1-neighborhood minimization
in contrast to the 2-neighborhood minimization
(see Section~\ref{sec:descentalgo}).
\end{itemize}

In the next section, we give the definition of 
DDM-convexity and basic properties of DDM-convex functions.
In Section~\ref{relation-sec}, we discuss a relationship between
DDM-convexity and known discrete convexities.
 For globally/locally discrete midpoint convex functions, 
Moriguchi et al.~\cite{MMTT-DMC} revealed a useful property, 
which is expressed by the so-called parallelgram inequality.
We show that a similar parallelgram inequality holds 
for DDM-convex functions in Section~\ref{para-ineq}.
Sections~\ref{chars} and \ref{operations} are devoted to
characterizations and operations for DDM-convexity.
We prove a proximity theorem for DDM-convex functions 
in Section~\ref{proximity}, while in Section~\ref{min-algo}
we propose a steepest descent algorithm and a scaling algorithm
for DDM-convex function minimization.
In Section~\ref{continuousDDMC}, we define DDM-convex functions 
in continuous variables and give proximity theorems
for such functions.

%%%%%%%%%%%%%%%%%%%%%%%%%%%%%%%%%%%%%%%%%%%%%%%%
\section{Directed discrete midpoint convexity}\label{def-DDMC-section}

We give the definition of directed discrete midpoint convexity and  show
its basic properties.

For an ordered pair $(x,y)$ of $x,y\in\ZZ^n$, 
we define $\dmid(x,y) \in \ZZ^n$ by
\[
\dmid(x,y)_i=
\begin{cases}
    \displaystyle \left\lceil \frac{x_i+y_i}{2} \right\rceil & (x_i\ge y_i), \\[1em]
    \displaystyle \left\lfloor \frac{x_i+y_i}{2} \right\rfloor & (x_i<y_i).
\end{cases}
\]
That is, each component $\dmid(x,y)_i$ of $\dmid(x,y)$ is defined 
by rounding up or rounding down $\frac{x_i+y_i}{2}$ to the integer
in the direction of $x_i$.
It is easy to show the next characterization of $\dmid(x,y)$ and
$\dmid(y,x)$.

\begin{proposition}\label{prop:dmid}
For $x,y,p,q\in\ZZ^n$, $p=\dmid(x,y)$ and $q=\dmid(y,x)$ hold
if and only if the following conditions {\rm (a)}$\sim${\rm (c)} hold:
\begin{enumerate}
\renewcommand{\labelenumi}{\rm (\alph{enumi})}
\item $p+q = x+y$,
\item $\|p-q\|_\infty\le 1$, and
\item for each $i=1,\dots,n$, if $x_i\ge y_i$, then $p_i\ge q_i$; 
otherwise $p_i\le q_i$.
\end{enumerate}
\end{proposition}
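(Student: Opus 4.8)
The plan is to exploit the fact that the definition of $\dmid$ and each of the conditions (a)--(c) are separable across coordinates: condition (a) reads $p_i+q_i=x_i+y_i$ for every $i$, condition (b) reads $|p_i-q_i|\le 1$ for every $i$, and condition (c) is already stated componentwise. Hence it suffices to fix an index $i$, write $s_i=(x_i+y_i)/2$, and prove the scalar equivalence between ``$p_i=\dmid(x,y)_i$ and $q_i=\dmid(y,x)_i$'' and ``(a), (b), (c) hold at coordinate $i$,'' then reassemble over all $i$. The two elementary facts I will rely on are $\lceil s_i\rceil+\lfloor s_i\rfloor=x_i+y_i$ and $\lceil s_i\rceil-\lfloor s_i\rfloor\in\{0,1\}$, the latter difference being $0$ exactly when $x_i+y_i$ is even.

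For the forward direction I would substitute the definition and split on the sign of $x_i-y_i$. If $x_i\ge y_i$ then $p_i=\lceil s_i\rceil$, while $\dmid(y,x)_i=\lfloor s_i\rfloor$ (using $y_i\le x_i$), so $q_i=\lfloor s_i\rfloor$; the two facts above immediately give $p_i+q_i=x_i+y_i$ and $p_i-q_i\in\{0,1\}$, hence (a), (b), and $p_i\ge q_i$ as required by (c). The case $x_i<y_i$ is symmetric with the roles of $\lceil\cdot\rceil$ and $\lfloor\cdot\rfloor$ exchanged, yielding $p_i=\lfloor s_i\rfloor\le\lceil s_i\rceil=q_i$. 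The boundary $x_i=y_i$ is absorbed into the first case, where $s_i$ is an integer and $p_i=q_i=s_i$.

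For the converse I assume (a), (b), (c) at coordinate $i$ and must recover the exact values. From (a) the sum $p_i+q_i=x_i+y_i$ is fixed, so $p_i$ and $q_i$ are determined by their difference $p_i-q_i$; by (b) this difference lies in $\{-1,0,1\}$, and by (c) its sign is pinned down (nonnegative if $x_i\ge y_i$, nonpositive if $x_i<y_i$). The remaining point --- which I expect to be the only genuine step --- is to show the magnitude of the difference is forced: a parity argument does this, since $p_i-q_i$ and $p_i+q_i=x_i+y_i$ share the same parity, so $|p_i-q_i|$ equals $0$ when $x_i+y_i$ is even and $1$ when it is odd. Combining this with the sign information reconstructs $p_i=\lceil s_i\rceil$ and $q_i=\lfloor s_i\rfloor$ when $x_i\ge y_i$ (and the reverse assignment when $x_i<y_i$), which is precisely $p_i=\dmid(x,y)_i$ and $q_i=\dmid(y,x)_i$. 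Assembling the coordinates completes the proof.
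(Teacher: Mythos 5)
Your proof is correct and complete: the componentwise reduction, the two facts $\lceil s_i\rceil+\lfloor s_i\rfloor=x_i+y_i$ and $\lceil s_i\rceil-\lfloor s_i\rfloor\in\{0,1\}$, and the parity argument pinning down $|p_i-q_i|$ in the converse together give exactly what is needed. The paper states this proposition without proof (it is introduced with ``It is easy to show''), and your argument is precisely the natural one the authors had in mind; in particular you correctly handle the only slightly delicate point, namely that when $x_i=y_i$ the definition formally rounds $\dmid(y,x)_i$ up rather than down, which is harmless because $(x_i+y_i)/2$ is then an integer.
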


For every $a,b\in\RR^n$, let us denote 
the $n$-dimensional vector $(a_1b_1,\dots,a_nb_n)$ by $a\odot b$.
The next proposition gives fundamental properties of $\dmid(\cdot,\cdot)$.

\begin{proposition}\label{prop:basic}
For every $ x,y,d\in\ZZ^n$, the following properties hold.
\begin{enumerate}
\renewcommand{\labelenumi}{\rm (\arabic{enumi})}
\item $x+y=\dmid(x,y)+\dmid(y,x)$.
\item If $x\ge y$, then $\dmid(x,y)=\lceil\frac{x+y}{2}\rceil$
and $\dmid(y,x)= \lfloor\frac{x+y}{2}\rfloor$.
\item If $\|x-y\|_\infty\le 1$, then $\dmid(x,y)=x$ and $\dmid(y,x)=y$.
\item $\dmid(x+d,y+d)=\dmid(x,y)+d$.
\item For any permutation $\sigma$ of $(1,\dots,n)$,
\[
 \dmid((x_{\sigma(1)},\dots,x_{\sigma(n)}),
 (y_{\sigma(1)},\dots,y_{\sigma(n)}))=
 (\dmid(x,y)_{\sigma(1)},\dots,\dmid(x,y)_{\sigma(n)}).
\]
\item For any $\tau\in\{+1,-1\}^n$, 
$\dmid(\tau\odot x,\tau \odot y)=\tau\odot\dmid(x,y)$.
\end{enumerate}
\end{proposition}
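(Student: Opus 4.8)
The plan is to exploit the fact that the definition of $\dmid(\cdot,\cdot)$ is \emph{separable}: each component $\dmid(x,y)_i$ depends only on the pair $(x_i,y_i)$, through a rounding rule selected by the sign of $x_i-y_i$. Consequently every one of the six assertions reduces to a coordinatewise statement, and it suffices to verify each for a single index $i$, i.e.\ for scalars $x_i,y_i,d_i\in\ZZ$. The only tools I need are the elementary identities $\lceil m/2\rceil+\lfloor m/2\rfloor=m$ for $m\in\ZZ$, the translation rules $\lceil t+k\rceil=\lceil t\rceil+k$ and $\lfloor t+k\rfloor=\lfloor t\rfloor+k$ for $k\in\ZZ$, and the reflection rules $\lfloor -t\rfloor=-\lceil t\rceil$ and $\lceil -t\rceil=-\lfloor t\rfloor$.

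For (1), I would note that it is exactly condition (a) of Proposition~\ref{prop:dmid} with $p=\dmid(x,y)$ and $q=\dmid(y,x)$; alternatively it follows coordinatewise from $\lceil m/2\rceil+\lfloor m/2\rfloor=m$ applied to $m=x_i+y_i$, once one observes that in the $i$-th coordinate one of $\dmid(x,y)_i,\dmid(y,x)_i$ is a round-up and the other a round-down of $(x_i+y_i)/2$. For (2), assuming $x\ge y$ componentwise, each $x_i\ge y_i$, so the definition gives $\dmid(x,y)_i=\lceil (x_i+y_i)/2\rceil$ at once; for $\dmid(y,x)_i$ I would split on whether $x_i>y_i$ (then $y_i<x_i$ triggers the round-down branch, giving the floor) or $x_i=y_i$ (then the round-up branch applies, but ceiling and floor coincide because $(x_i+y_i)/2$ is an integer), so $\dmid(y,x)_i=\lfloor(x_i+y_i)/2\rfloor$ in either case. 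Property (3) is a finite case check: since $\|x-y\|_\infty\le 1$ means $|x_i-y_i|\in\{0,1\}$ for every $i$, I would evaluate the rounding rule in the three cases $x_i=y_i$, $x_i=y_i+1$, $x_i=y_i-1$ and confirm that $\dmid(x,y)_i=x_i$ and $\dmid(y,x)_i=y_i$ each time.

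The remaining three are equivariance statements. For (4), the sign of $(x_i+d_i)-(y_i+d_i)$ equals that of $x_i-y_i$, so the same branch is selected, and the translation rule turns the shifted argument $(x_i+y_i)/2+d_i$ into $\dmid(x,y)_i+d_i$. Property (5) is immediate from separability, since permuting the coordinates of the arguments permutes the independently-computed outputs in the same way. Property (6) is the one requiring slightly more care, and I expect it to be the only genuine sticking point. When $\tau_i=+1$ there is nothing to prove; when $\tau_i=-1$, passing to $-x_i,-y_i$ reverses the inequality $x_i\ge y_i$ into $-x_i\le -y_i$, so the definition now selects the opposite branch (round-down in place of round-up, and vice versa), and the reflection identities $\lfloor -t\rfloor=-\lceil t\rceil$, $\lceil -t\rceil=-\lfloor t\rfloor$ show that this flipped branch evaluated at $-(x_i+y_i)/2$ returns exactly $-\dmid(x,y)_i$.

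The one subtlety running through (2) and (6) is the boundary case $x_i=y_i$: there the stated rule uses the ceiling whereas the branch one "expects" after a reversal would use the floor, yet the two agree because the half-sum is an integer. I would record this coincidence explicitly, since it is precisely what makes the direction-reversal argument in (6) airtight rather than merely plausible; everything else is routine bookkeeping with $\lceil\cdot\rceil$ and $\lfloor\cdot\rfloor$.
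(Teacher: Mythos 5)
Your proposal is correct and follows essentially the same route as the paper: the paper declares (1)--(5) obvious from the componentwise definition and proves (6) by the same case split on $\tau_i$ using the reflection identities $\lfloor -t\rfloor=-\lceil t\rceil$ and $\lceil -t\rceil=-\lfloor t\rfloor$. Your explicit attention to the boundary case $x_i=y_i$ (where the selected branch flips but floor and ceiling coincide because the half-sum is an integer) is a point the paper's displayed case analysis silently glosses over, so that remark is a welcome refinement rather than a deviation.
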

\begin{proof}
Properties (1)-(5) are obvious by the definition of $\dmid(\cdot,\cdot)$.
Let us show (6). If $\tau_i=+1$,
\[
\dmid(\tau\odot x,\tau \odot y)_i=
\begin{cases}
\lceil\frac{x_i+y_i}{2}\rceil=\dmid(x,y)_i&\ (x_i\ge y_i)\\
\lfloor \frac{x_i+y_i}{2}\rfloor=\dmid(x,y)_i&\ (x_i<y_i)
\end{cases}
\]
holds, and if $\tau_i=-1$,
\[
\dmid(\tau\odot x,\tau \odot y)_i=
\begin{cases}
\lfloor\frac{-x_i-y_i}{2}\rfloor=-\lceil\frac{x_i+y_i}{2}\rceil=-\dmid(x,y)_i&\ (x_i \geq y_i)\\
\lceil\frac{-x_i-y_i}{2}\rceil=-\lfloor\frac{x_i+y_i}{2}\rfloor=-\dmid(x,y)_i&\ (x_i < y_i)
\end{cases}
\]
holds.
\end{proof}

By using the introduced $\dmid(\cdot,\cdot)$, we propose
new classes of functions and sets.
We say that a function $f:\ZZ^n \to \RR \cup \{+\infty\}$
satisfies {\em directed discrete midpoint convexity (DDM-convexity)}\
or is 
a {\em directed discrete midpoint convex function (DDM-convex function)}\ if
\begin{equation}\label{DDMC}
f(x)+f(y)\ge f(\dmid(x, y))+f(\dmid(y, x)) 
\end{equation}
for all  $x,y \in \ZZ^n$.
We call $S \subseteq \ZZ^n$
a {\em directed discrete midpoint convex set (DDM-convex set)}\
if its indicator function $\delta_S$ is DDM-convex, that is, if
\[
x, y\in S\Rightarrow  \dmid(x, y),\; \dmid(y, x)\in S
\]
holds.

The next propositions are direct consequences of 
Proposition~\ref{prop:basic} and the definition (\ref{DDMC}).

\begin{proposition}\label{prop:obviousprop} The following statements hold:

\begin{enumerate}
\renewcommand{\labelenumi}{\rm (\arabic{enumi})}
\item Any function defined on $\{0, 1\}^n$ is a DDM-convex function.
\item Any subset of $\{0, 1\}^n$ is a DDM-convex set.
\item For a DDM-convex function $f$, its effective domain $\dom f$ and 
the set $\argmin f$ of minimizers of $f$ are DDM-convex sets, where
$\argmin f$ is defined by
\[
 \argmin f = \{ x \in \ZZ^n \mid f(x) \leq f(z)\;
 (\forall z \in \ZZ^n)\}.
\]
\end{enumerate}
\end{proposition}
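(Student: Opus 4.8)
The plan is to prove each of the three statements in Proposition~\ref{prop:obviousprop} directly from the definition~(\ref{DDMC}) of DDM-convexity together with the basic properties collected in Proposition~\ref{prop:basic}.

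For statement (1), I would take any $x,y\in\{0,1\}^n$ and observe that $\|x-y\|_\infty\le 1$, since each coordinate difference lies in $\{-1,0,1\}$. By Proposition~\ref{prop:basic}(3), this immediately gives $\dmid(x,y)=x$ and $\dmid(y,x)=y$. Hence the DDM-convexity inequality~(\ref{DDMC}) reduces to the trivial identity $f(x)+f(y)\ge f(x)+f(y)$, which holds for \emph{any} function $f$ defined on $\{0,1\}^n$. Statement (2) is then an instance of statement (1): the indicator function $\delta_S$ of a subset $S\subseteq\{0,1\}^n$ is a function on $\{0,1\}^n$, so it is DDM-convex by (1), and therefore $S$ is a DDM-convex set by definition.

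For statement (3), I would argue separately for the two sets. To show $\dom f$ is a DDM-convex set, take $x,y\in\dom f$, so that $f(x),f(y)<+\infty$. Then the right-hand side of~(\ref{DDMC}) satisfies $f(\dmid(x,y))+f(\dmid(y,x))\le f(x)+f(y)<+\infty$, which forces both $f(\dmid(x,y))$ and $f(\dmid(y,x))$ to be finite (a sum of two values in $\RR\cup\{+\infty\}$ is finite only when both summands are finite). Hence $\dmid(x,y),\dmid(y,x)\in\dom f$, as required. For $\argmin f$, let $\mu^\ast=\min_z f(z)$ be the common value attained on $\argmin f$, and take $x,y\in\argmin f$, so $f(x)=f(y)=\mu^\ast$. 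By~(\ref{DDMC}) we get $f(\dmid(x,y))+f(\dmid(y,x))\le 2\mu^\ast$; since each term is at least $\mu^\ast$ by minimality, both must equal $\mu^\ast$, so $\dmid(x,y),\dmid(y,x)\in\argmin f$.

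I do not anticipate a substantive obstacle in any of these arguments, as each reduces to a one-line consequence of the definition; the only point requiring mild care is the handling of $+\infty$ values in the $\dom f$ case, where one must note that a finite upper bound on a sum of extended-real values propagates to each summand. The statement is genuinely routine, and the phrase ``direct consequences'' in the text accurately reflects the difficulty.
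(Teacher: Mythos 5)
Your proof is correct and follows exactly the route the paper intends: the paper gives no explicit proof, stating only that the proposition is a direct consequence of Proposition~\ref{prop:basic} and the definition~(\ref{DDMC}), and your write-up simply fills in those routine details. The only point you leave implicit in~(1) is the case where one of $x,y$ lies outside $\{0,1\}^n$, but there the left-hand side of~(\ref{DDMC}) is $+\infty$ and the inequality is vacuous, so nothing is missing in substance.
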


\begin{proposition}\label{prop:basic-operations}
Let $f, f_1, f_2:\ZZ^n \to \RR \cup \{+\infty\}$ be
DDM-convex functions.
\begin{enumerate}
\renewcommand{\labelenumi}{\rm (\arabic{enumi})}
\item For any $d\in\ZZ^n$, 
  $g(x)=f(x+d)$ is a DDM-convex function.
\item For any permutation $\sigma$ of $(1, \dots, n)$,
  $g(x)=f(x_{\sigma(1)}, \dots, x_{\sigma(n)})$ is a DDM-convex function.
\item For any $\tau\in\{+1, -1\}^n$, 
  $g(x)=f(\tau\odot x)$ is a DDM-convex function.
\item For any $a_1, a_2\ge0$, 
  $g(x)=a_1f_1(x)+a_2f_2(x)$ is a DDM-convex function.
\end{enumerate}
\end{proposition}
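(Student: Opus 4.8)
The plan is to obtain each of (1)--(4) directly from the defining inequality (\ref{DDMC}) by transporting the DDM-convexity inequality for $f$ through the corresponding structural identity of Proposition~\ref{prop:basic}. For the three symmetry operations (1)--(3) the mechanism is uniform: each operation is a bijection of $\ZZ^n$ under which $\dmid(\cdot,\cdot)$ is equivariant, so I apply (\ref{DDMC}) to $f$ at the \emph{transformed} pair and then rewrite the two midpoints using the relevant part of Proposition~\ref{prop:basic}.

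Concretely, for (1) I would fix $x,y\in\ZZ^n$ and apply (\ref{DDMC}) to the pair $(x+d,y+d)$, getting $f(x+d)+f(y+d)\ge f(\dmid(x+d,y+d))+f(\dmid(y+d,x+d))$; invoking Proposition~\ref{prop:basic}(4) to replace $\dmid(x+d,y+d)$ by $\dmid(x,y)+d$ and $\dmid(y+d,x+d)$ by $\dmid(y,x)+d$ turns this into exactly $g(x)+g(y)\ge g(\dmid(x,y))+g(\dmid(y,x))$. Part (2) is identical in structure, applying (\ref{DDMC}) to the permuted pair and using Proposition~\ref{prop:basic}(5), and part (3) applies (\ref{DDMC}) to $(\tau\odot x,\tau\odot y)$ and uses Proposition~\ref{prop:basic}(6); here I note that the same identity $\dmid(\tau\odot x,\tau\odot y)=\tau\odot\dmid(x,y)$ also yields $\dmid(\tau\odot y,\tau\odot x)=\tau\odot\dmid(y,x)$ by interchanging the roles of $x$ and $y$, so both right-hand terms transform correctly. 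Part (4) uses no geometry of $\dmid$ at all: I would write the DDM-convexity inequalities for $f_1$ and $f_2$ at the same pair $(x,y)$, multiply them by $a_1\ge0$ and $a_2\ge0$ respectively (so the inequalities are preserved), and add, which gives $g(x)+g(y)\ge g(\dmid(x,y))+g(\dmid(y,x))$.

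I do not anticipate a genuine obstacle, since every step is a single application of an already-established identity. The only point needing a little care in (1)--(3) is the bookkeeping: one must check that the relevant identity from Proposition~\ref{prop:basic} applies to \emph{both} ordered midpoints, which it does because each of (4), (5), (6) is symmetric under swapping $x$ and $y$. Moreover, since each operation is a bijection of $\ZZ^n$, evaluating (\ref{DDMC}) at the transformed pair loses no generality, so the resulting inequality holds for all $x,y$.
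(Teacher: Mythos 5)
Your proposal is correct and is precisely the argument the paper has in mind: the paper dispenses with these four statements by noting they are ``direct consequences of Proposition~\ref{prop:basic} and the definition (\ref{DDMC})'', and your write-up simply makes that explicit by applying (\ref{DDMC}) at the transformed pair and invoking the equivariance identities (4)--(6) of Proposition~\ref{prop:basic}, plus nonnegative combination for part (4). No gaps; nothing further is needed.
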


\begin{proposition}\label{prop:basic-operationsset}
Let $S, S_1, S_2\subseteq\ZZ^n$ be DDM-convex sets.
\begin{enumerate}
\renewcommand{\labelenumi}{\rm (\arabic{enumi})}
\item For any $d\in\ZZ^n$, 
  $T=\{x+d \mid x\in S\}$ is a DDM-convex set.
\item For any permutation $\sigma$ of $(1, \dots, n)$,
  $T=\{(x_{\sigma(1)},\dots,x_{\sigma(n)}) \mid (x_1,\dots,x_n) \in S \}$
  is a DDM-convex set.
\item For any $\tau\in\{+1, -1\}^n$, 
  $T=\{\tau\odot x \mid  x\in S\}$ is a DDM-convex set.
\item $T=S_1\cap S_2$ is a DDM-convex set.
\end{enumerate}
\end{proposition}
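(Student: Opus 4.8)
The plan is to work directly with the combinatorial description of a DDM-convex set: $S$ is DDM-convex precisely when $x,y\in S$ implies $\dmid(x,y),\dmid(y,x)\in S$. For each of the four operations I would reduce the membership test for $T$ to the one for $S$ (or for $S_1,S_2$) by commuting the operation through $\dmid(\cdot,\cdot)$ using the appropriate clause of Proposition~\ref{prop:basic}. Equivalently, since $\delta_T$ arises from $\delta_S$ by exactly the operations handled in Proposition~\ref{prop:basic-operations}, each claim is the indicator-function case of that proposition; but the direct set argument is shorter and is what I would write out.

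Claims (1)--(3) admit a uniform treatment. Let $\phi$ denote the relevant bijection of $\ZZ^n$: translation $x\mapsto x+d$ for (1), the coordinate permutation $x\mapsto(x_{\sigma(1)},\dots,x_{\sigma(n)})$ for (2), and the sign flip $x\mapsto\tau\odot x$ for (3); in each case $T=\{\phi(x)\mid x\in S\}$. Take arbitrary $u,v\in T$ and write $u=\phi(x)$, $v=\phi(y)$ with $x,y\in S$. Then Proposition~\ref{prop:basic}(4), (5), (6), respectively, gives $\dmid(u,v)=\phi(\dmid(x,y))$ and $\dmid(v,u)=\phi(\dmid(y,x))$. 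Since $S$ is DDM-convex, $\dmid(x,y),\dmid(y,x)\in S$, so their images under $\phi$ lie in $T$, which is exactly the membership condition certifying that $T$ is a DDM-convex set. The three cases differ only in which commutation identity of Proposition~\ref{prop:basic} is invoked.

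Claim (4) is immediate: if $x,y\in S_1\cap S_2$, then applying the DDM-convexity of $S_1$ and of $S_2$ separately yields $\dmid(x,y),\dmid(y,x)\in S_1$ and $\dmid(x,y),\dmid(y,x)\in S_2$, hence both points lie in $T=S_1\cap S_2$.

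I expect no genuine obstacle here; the content is essentially bookkeeping. The only points needing care are that each $\phi$ in (1)--(3) is a bijection, so that the pairs $(u,v)$ with $u,v\in T$ are exactly the images of the pairs $(x,y)$ with $x,y\in S$, and that the correct clause of Proposition~\ref{prop:basic} is matched to each operation.
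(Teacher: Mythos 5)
Your proof is correct and follows exactly the route the paper intends: the paper states this proposition as a direct consequence of Proposition~\ref{prop:basic} and the definition of DDM-convex sets, and your argument simply writes out that consequence, matching clauses (4), (5), (6) of Proposition~\ref{prop:basic} to the translation, permutation, and sign-flip cases and handling the intersection directly. No gaps.
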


%%%%%%%%%%%%%%%%%%%%%%%%%%%%%%%%%%%%%%%%%%%%%%%%
\section{Relationships with known discrete convexities}\label{relation-sec}

We discuss relationships between DDM-convexity and known discrete
convexities, including integral convexity, L$^\natural$-convexity, 
global/local discrete midpoint convexity and 2-separable convexity.

As mentioned in Section~\ref{intro}, 
the class of integrally convex functions is characterized by
weak discrete midpoint convexity (\ref{weakDMC}).
Since DDM-convexity (\ref{DDMC}) trivially implies (\ref{weakDMC}), 
any DDM-convex function is integrally convex.
Therefore, DDM-convex functions inherit many properties of 
integrally convex functions.
We introduce a good property of integrally convex functions as well as
DDM-convex functions, {\em box-barrier property}.

\begin{theorem}
[\protect{Box-barrier property~\cite[Theorem 2.6]{MMTT2019}}]\label{box-barrier}
Let $f:\ZZ^n \to \RR \cup \{+\infty\}$ be
an integrally convex function, and let 
$p\in(\ZZ\cup\{-\infty\})^n$ and 
$q\in(\ZZ\cup\{+\infty\})^n$ with $p\leq q$.
Define 
\begin{align*}
S  &=\{x\in\ZZ^n \mid p_i<x_i<q_i\ (i=1, \dots, n)\}, \\
W^+_i &=\{x\in\ZZ^n \mid x_i=q_i, p_j\le x_j\le q_j\ (j\not=i)\}\quad (i=1, \dots, n),\\
W^-_i &=\{x\in\ZZ^n \mid x_i=p_i, p_j\le x_j\le q_j\ (j\not=i)\}\quad (i=1, \dots, n), \\
W &=\bigcup_{i=1}^n (W^+_i\cup W^-_i),
\end{align*}
and $\hat{x}\in S\cap \dom f$.
If $f(\hat{x})\leq f(y)$ for all $y\in W$, then
$f(\hat{x})\le f(z)$ for all $z\in\ZZ^n\setminus S$.
\end{theorem}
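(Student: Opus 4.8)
The plan is to avoid a purely discrete minimal-counterexample induction (where the midpoint of $\hat{x}$ and an exterior point can fall back into $S$ and stall the recursion) and instead pass to the continuous convex envelope $\tilde{f}$, running a one-dimensional convexity argument along a single segment with all the combinatorial work concentrated at one boundary point. Since $f$ is integrally convex, $\tilde{f}$ is convex on all of $\RR^n$ and agrees with $f$ on $\ZZ^n$ (because $N(v)=\{v\}$ for $v\in\ZZ^n$). I would argue by contradiction: suppose some integer $z\in\ZZ^n\setminus S$ has $f(z)<f(\hat{x})$. Because $z\notin S$, there is a coordinate $i$ with $z_i\ge q_i$ or $z_i\le p_i$, and the relevant bound must then be finite; hence along the segment from the interior point $\hat{x}$ to $z$ that coordinate crosses its bound.

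Next I would take $y^\ast=(1-t^\ast)\hat{x}+t^\ast z$ to be the first point (smallest $t^\ast\in(0,1]$) at which the segment meets the boundary $\partial\bigl(\prod_i[p_i,q_i]\bigr)$. Convexity of $\tilde{f}$ along this segment gives $\tilde{f}(y^\ast)\le(1-t^\ast)f(\hat{x})+t^\ast f(z)<f(\hat{x})$, using $t^\ast>0$ and $f(z)<f(\hat{x})$. It then suffices to prove the reverse inequality $\tilde{f}(y^\ast)\ge f(\hat{x})$, which contradicts the hypothesis $f(\hat{x})\le f(y)$ for all $y\in W$.

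The crux, and the step I expect to be the main obstacle, is showing that every integer point of the neighborhood $N(y^\ast)$ in fact lies in $W$; granting this, $\tilde{f}(y^\ast)$ is a convex combination of values $f(w)$ with $w\in W$, so $\tilde{f}(y^\ast)\ge\min_{w\in W}f(w)\ge f(\hat{x})$, closing the argument. To establish $N(y^\ast)\subseteq W$ I would exploit that $t^\ast$ is the \emph{first} exit time: let $I=\{i:y^\ast_i\in\{p_i,q_i\}\}$, which is nonempty. For $i\in I$ the bound $y^\ast_i$ is an integer, so $|w_i-y^\ast_i|<1$ forces $w_i=y^\ast_i\in\{p_i,q_i\}$ for every $w\in N(y^\ast)$; for a finitely bounded $j\notin I$ we have $p_j<y^\ast_j<q_j$ (since $y^\ast$ lies in the closed box while $y^\ast_j\notin\{p_j,q_j\}$), whence $p_j\le\lfloor y^\ast_j\rfloor$ and $\lceil y^\ast_j\rceil\le q_j$ give $p_j\le w_j\le q_j$. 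Fixing any $i_0\in I$ then places $w$ in $W^+_{i_0}$ or $W^-_{i_0}$ according to whether $y^\ast_{i_0}=q_{i_0}$ or $p_{i_0}$. The remaining care is bookkeeping for infinite entries of $p$ or $q$: a violated coordinate of $z$ always has a finite bound, and coordinates with infinite bounds never reach their bound along a bounded segment, so they never enter $I$ and impose no constraint in the definition of $W^{\pm}_{i_0}$. This confirms $N(y^\ast)\subseteq W$ and completes the proof.
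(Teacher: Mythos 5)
Your argument is correct. Note that the paper does not prove this statement at all---it is imported verbatim from \cite[Theorem~2.6]{MMTT2019}---so there is no internal proof to compare against; your route (pass to the convex envelope $\tilde{f}$, which is convex by the definition of integral convexity and agrees with $f$ on $\ZZ^n$; walk the segment from $\hat{x}$ to a hypothetical better point $z\in\ZZ^n\setminus S$ until it first meets the boundary of the box at $y^{*}$; verify $N(y^{*})\subseteq W$; and play the one-dimensional estimate $\tilde{f}(y^{*})\le(1-t^{*})f(\hat{x})+t^{*}f(z)<f(\hat{x})$ against $\tilde{f}(y^{*})\ge\min_{w\in N(y^{*})}f(w)\ge\min_{w\in W}f(w)\ge f(\hat{x})$) is essentially the proof given in that reference. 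The delicate points are all handled: the first-exit time exists because any violated coordinate of $z$ necessarily has a finite bound; integrality of the finite $p_i,q_i$ pins $w_i=y^{*}_i$ for $i\in I$ and yields $p_j\le w_j\le q_j$ for $j\notin I$ by rounding; and coordinates with infinite bounds never enter $I$ and impose no constraint in $W^{\pm}_{i_0}$.
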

By setting $p = \hat{x} - \vecone$ and $q = \hat{x} + \vecone$ where
$\vecone$ denotes the vector of all ones, box-barrier property implies
the minimality criterion of integrally convex functions.

\begin{theorem}[\protect{\cite[Proposition 3.1]{favati-tardella}; 
see also \cite[Theorem 3.21]{DCA}}]\label{th:1-opt}
Let $f: \ZZ^n \to \RR \cup \{+\infty\}$
be an integrally convex function and $\hat{x} \in \dom f$.
Then $\hat{x}$ is a minimizer of $f$
if and only if
$f(\hat{x}) \leq f(\hat{x} +  d)$ for all 
$d \in  \{ -1, 0, +1 \}^{n}$.
\end{theorem}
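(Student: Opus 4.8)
The plan is to derive the statement directly from the box-barrier property (Theorem~\ref{box-barrier}), following the hint in the paragraph just preceding the theorem. The necessity (``only if'') direction is immediate: if $\hat{x}$ minimizes $f$ over all of $\ZZ^n$, then in particular $f(\hat{x}) \leq f(\hat{x}+d)$ for every $d \in \{-1,0,+1\}^n$. So the substance lies entirely in the sufficiency (``if'') direction.

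For sufficiency, I would assume $f(\hat{x}) \leq f(\hat{x}+d)$ for all $d \in \{-1,0,+1\}^n$ and invoke Theorem~\ref{box-barrier} with the choice $p = \hat{x} - \vecone$ and $q = \hat{x} + \vecone$. First I would verify the hypotheses: $p$ and $q$ are finite integer vectors with $p \le q$, and with this choice the interior set becomes $S = \{x \in \ZZ^n \mid \hat{x}_i - 1 < x_i < \hat{x}_i + 1 \ (i=1,\dots,n)\} = \{\hat{x}\}$, since $\hat{x}_i$ is the unique integer strictly between $\hat{x}_i-1$ and $\hat{x}_i+1$. In particular $\hat{x} \in S \cap \dom f$, as required to apply the theorem.

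The key observation is that every wall point lies in the $1$-neighborhood of $\hat{x}$. Indeed, each $y \in W^+_i$ satisfies $y_i = \hat{x}_i+1$ and $\hat{x}_j - 1 \le y_j \le \hat{x}_j + 1$ for $j \ne i$, so $y = \hat{x} + d$ for some $d \in \{-1,0,+1\}^n$; the same holds for $y \in W^-_i$. Hence $W \subseteq \{\hat{x}+d \mid d \in \{-1,0,+1\}^n\}$, and the hypothesis $f(\hat{x}) \leq f(\hat{x}+d)$ yields $f(\hat{x}) \leq f(y)$ for all $y \in W$. The box-barrier property then gives $f(\hat{x}) \leq f(z)$ for all $z \in \ZZ^n \setminus S = \ZZ^n \setminus \{\hat{x}\}$, and since $f(\hat{x}) \le f(\hat{x})$ trivially, we conclude $f(\hat{x}) \leq f(z)$ for all $z \in \ZZ^n$, i.e.\ $\hat{x}$ is a minimizer.

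Given that Theorem~\ref{box-barrier} is already available, there is no serious obstacle in this argument; the only point demanding care is the bookkeeping confirming that $S = \{\hat{x}\}$ and that $W$ is contained in the $1$-neighborhood, which is precisely what makes the local hypothesis supply exactly the wall condition needed to invoke the box-barrier property.
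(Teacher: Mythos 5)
Your proof is correct and follows exactly the route the paper indicates: the paragraph preceding the theorem states that the box-barrier property with $p = \hat{x} - \vecone$ and $q = \hat{x} + \vecone$ yields this minimality criterion, and your verification that $S = \{\hat{x}\}$ and $W$ lies in the $1$-neighborhood is precisely the bookkeeping that makes that remark rigorous.
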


As a special case of Theorem~\ref{th:1-opt}, we have
the minimality criterion of DDM-convex functions.

\begin{corollary}\label{col:1-opt}
Let $f: \ZZ^n \to \RR \cup \{+\infty\}$
be a DDM-convex function and $\hat{x} \in \dom f$.
Then $\hat{x}$ is a minimizer of $f$
if and only if
$f(\hat{x}) \leq f(\hat{x} +  d)$ for all 
$d \in  \{ -1, 0, +1 \}^{n}$.
\end{corollary}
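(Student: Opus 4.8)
The plan is to reduce the statement to the minimality criterion for integrally convex functions, Theorem~\ref{th:1-opt}, which is already available; the only thing I need to supply is that DDM-convexity is a strictly stronger hypothesis than integral convexity, so that Theorem~\ref{th:1-opt} applies verbatim to $f$.

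First I would dispose of the ``only if'' direction, which holds for an arbitrary $f$ and uses no convexity at all: if $\hat{x}$ minimizes $f$, then $f(\hat{x}) \leq f(z)$ for every $z \in \ZZ^n$, and in particular for $z = \hat{x}+d$ with $d \in \{-1,0,+1\}^n$. For the substantive ``if'' direction I would invoke the observation recorded at the start of Section~\ref{relation-sec}: the DDM-convexity inequality (\ref{DDMC}) implies the weak discrete midpoint convexity inequality (\ref{weakDMC}) for every pair $x,y$, and since integral convexity is characterized by (\ref{weakDMC}), the given DDM-convex $f$ is integrally convex. The hypothesis that $f(\hat{x}) \leq f(\hat{x}+d)$ for all $d \in \{-1,0,+1\}^n$ is then exactly the local condition in Theorem~\ref{th:1-opt}, and that theorem yields $f(\hat{x}) \leq f(z)$ for all $z \in \ZZ^n$, i.e.\ $\hat{x}$ is a global minimizer.

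I do not expect any genuine obstacle here: all the work is carried by the containment of the DDM-convex class in the integrally convex class, and the corollary is a specialization of Theorem~\ref{th:1-opt}. If instead a self-contained argument were preferred, I would run the same box-barrier reduction used to derive Theorem~\ref{th:1-opt}: apply Theorem~\ref{box-barrier} with $p = \hat{x}-\vecone$ and $q = \hat{x}+\vecone$, so that $S = \{\hat{x}\}$ and the barrier set $W$ consists precisely of the points $\hat{x}+d$ with $d \in \{-1,0,+1\}^n \setminus \{\veczero\}$. The local minimality hypothesis gives $f(\hat{x}) \leq f(y)$ for all $y \in W$, whence $f(\hat{x}) \leq f(z)$ for every $z \in \ZZ^n \setminus S$, and together with the trivial case $z = \hat{x}$ this is global minimality. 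The only mild point to verify along that route is the identification of $W$ with the boundary of the unit box around $\hat{x}$, which is immediate from the definitions of $S$, $W_i^{\pm}$, and $W$.
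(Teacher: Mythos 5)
Your proposal is correct and matches the paper's own route: the paper derives Corollary~\ref{col:1-opt} precisely as a special case of Theorem~\ref{th:1-opt}, using the observation at the start of Section~\ref{relation-sec} that DDM-convexity implies weak discrete midpoint convexity and hence integral convexity. Your optional box-barrier argument is also the same mechanism the paper itself uses to obtain Theorem~\ref{th:1-opt} from Theorem~\ref{box-barrier}, so nothing further is needed.
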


We next discuss the relationship between L$^\natural$-convexity
and DDM-convexity.
L$^\natural$-convex functions are originally defined by
{\em translation-submodularity}:
\begin{equation}\label{trans-submo}
 f(x)+f(y)\geq f((x-\alpha\vecone)\vee y)+f(x\wedge (y+\alpha \vecone))
\end{equation}
for all $x,y \in \ZZ^n$ and nonnegative integer $\alpha$, where
$p \vee q$ and $p \wedge q$ denote the componentwise maximum and minimum
of the vectors $p$ and $q$, respectively.
Translation-submodularity is a generalization of {\em submodularity}:
\begin{equation}\label{submo}
 f(x)+f(y)\geq f(x \vee y)+f(x \wedge y).
\end{equation}
L$^\natural$-convexity has several equivalent characterizations as below.
\begin{theorem}[\protect{\cite[Corollary 5.2.2]{favati-tardella}, 
\cite[Theorem 3]{FM00}, \cite[Theorem 7.7]{DCA}}]\label{thm:L-char}
For a function $f:\ZZ^n \to \RR \cup \{+\infty\}$,
the following properties are equivalent:
\begin{enumerate}
\renewcommand{\labelenumi}{\rm (\arabic{enumi})}
\item 
$f$ is $\textrm{L}^\natural$-convex, that is, $(\ref{trans-submo})$ holds
for all $x,y \in \ZZ^n$ and nonnegative integer $\alpha$.
\item 
$f$ satisfies discrete midpoint convexity $(\ref{DMC})$
for all $x,y \in \ZZ^n$.
\item $f$ is integrally convex and submodular.
\item For every $x, y\in\ZZ^n$ with $x \not\geq y$ and 
$A=\argmax_i\{y_i-x_i\}$,
\[
    f(x)+f(y)\geq f(x+\vecone_A)+f(y-\vecone_A),
\]
where the $i$-component of $\vecone_A$ is $1$ if $i \in A$; otherwise $0$.
\end{enumerate}
\end{theorem}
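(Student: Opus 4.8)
The four statements are the classical equivalent descriptions of $\mathrm{L}^\natural$-convexity, and the plan is to prove them as a cycle of implications organized around the discrete midpoint inequality (2): I would establish $(3)\Rightarrow(2)\Rightarrow(1)\Rightarrow(3)$ and then insert $(1)\Leftrightarrow(4)$. The recurring theme, and the source of essentially all the work, is that (1), (3), (4) are phrased through the \emph{directional} lattice operations $\vee,\wedge$, whereas (2) rounds every coordinate in the same way; reconciling these two viewpoints is the crux, while the remaining bookkeeping is routine.

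The cleanest link is $(3)\Rightarrow(2)$, which I would argue through the local convex envelope $\tilde f$ of Section~\ref{intro}. Fix $x,y$ and set $z=(x+y)/2$, $p=\lceil z\rceil$, $q=\lfloor z\rfloor$. Since $p,q\in N(z)$ and $\tfrac12 p+\tfrac12 q=z$, integral convexity (equivalently, weak discrete midpoint convexity \eqref{weakDMC}) gives $f(x)+f(y)\ge 2\tilde f(z)$, so it remains to prove $\tilde f(z)=\tfrac12\bigl(f(p)+f(q)\bigr)$. On the smallest box spanned by $N(z)$ the local envelope of a submodular function coincides with its convex closure (Lov\'asz extension), and that extension evaluated at the midpoint $z$ is exactly the average of $f$ at the two diagonal corners $p$ and $q$; submodularity \eqref{submo} is precisely what forces this antipodal split to be the minimizing convex combination. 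Combining the two estimates yields \eqref{DMC}.

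The hard crossings are $(2)\Rightarrow(1)$ and the integral-convexity half of $(1)\Rightarrow(3)$, and here I expect the main obstacle: a single instance of \eqref{trans-submo} moves the coordinates of $x$ and $y$ asymmetrically and in general overshoots the balanced rounding of \eqref{DMC}, so one cannot read off the directional inequality from the symmetric one in a single step. For $(2)\Rightarrow(1)$ I would proceed in two stages: first upgrade \eqref{DMC} to full submodularity \eqref{submo} by induction on $\|x-y\|_\infty$ (when $\|x-y\|_\infty\le1$ one has $\lceil z\rceil=x\vee y$ and $\lfloor z\rfloor=x\wedge y$, so \eqref{DMC} is just submodularity of that pair, and for larger distances \eqref{DMC} splits $(x,y)$ into the closer pair $(\lceil z\rceil,\lfloor z\rfloor)$ sandwiched between $x\wedge y$ and $x\vee y$, to which the hypothesis applies), and then bootstrap submodularity together with \eqref{DMC} to translation-submodularity \eqref{trans-submo} by induction on $\alpha$. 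The integral convexity needed for $(1)\Rightarrow(3)$ is of the same nature, since it amounts to the weak form \eqref{weakDMC} of the midpoint inequality. Carrying out this directional/symmetric bookkeeping is the delicate point, and it is exactly what is established in \cite{favati-tardella,FM00,DCA}; in the write-up I would reproduce the exchange argument or defer to those sources.

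Finally, for condition (4): given $x\not\ge y$ with $\beta=\max_i(y_i-x_i)$ and $A=\argmax_i(y_i-x_i)$, I would derive $f(x)+f(y)\ge f(x+\vecone_A)+f(y-\vecone_A)$ from submodularity applied on the sublattice of coordinates of maximal gap together with integral convexity, noting that one instance of \eqref{trans-submo} is too coarse, as it sends the $A$-coordinates of $x$ all the way to those of $y$ rather than by a single unit. Conversely, $(4)\Rightarrow(1)$ follows by iterating this local exchange: starting from $(x,y)$, each application of (4) strictly decreases $\sum_i\max\{y_i-x_i,0\}$ without increasing $f(x)+f(y)$, and after finitely many steps the configuration reaches $\bigl((x-\alpha\vecone)\vee y,\ x\wedge(y+\alpha\vecone)\bigr)$, which establishes \eqref{trans-submo}.
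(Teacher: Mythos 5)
First, a point of calibration: the paper does not prove Theorem~\ref{thm:L-char} at all --- it is imported verbatim from the literature with the citations \cite{favati-tardella}, \cite{FM00}, \cite{DCA}, so there is no in-paper argument to match yours against. Your proposal is therefore judged on its own terms, and while its overall architecture (the cycle $(3)\Rightarrow(2)\Rightarrow(1)\Rightarrow(3)$ plus $(1)\Leftrightarrow(4)$) is reasonable and the $(3)\Rightarrow(2)$ step via the Lov\'asz extension is a genuinely correct and self-contained piece ($\tilde f(z)$ at the all-halves point of the box $N(z)$ equals $\tfrac12(f(\lceil z\rceil)+f(\lfloor z\rfloor))$ exactly when $f$ is submodular there), two of the remaining steps have concrete gaps, and the hardest crossings are explicitly deferred to the very references the theorem already cites, which makes the proposal a roadmap rather than a proof.

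The first gap is in your induction for $(2)\Rightarrow$ submodularity. Inequality (\ref{DMC}) replaces $(x,y)$ by the pair $(\lceil z\rceil,\lfloor z\rfloor)$, but this pair is \emph{comparable} ($\lceil z\rceil\geq\lfloor z\rfloor$), so its join and meet are the pair itself and the inductive hypothesis applied to it is vacuous; it gives no route back to $f(x\vee y)+f(x\wedge y)$. The actual content here is that (\ref{DMC}) for pairs at $\ell_\infty$-distance $\leq 1$ \emph{is} local submodularity, and one must then propagate local to global submodularity using properties of the domain --- a different argument from the one you sketch. The second gap is in $(4)\Rightarrow(1)$: iterating the exchange of (4) always adjusts the coordinates in $\argmax_i\{y_i-x_i\}$ by one unit, so the iteration terminates at a pair in which every positive gap $y_i-x_i$ has been averaged out while the coordinates with $x_i>y_i$ are untouched; this terminal pair is not $\bigl((x-\alpha\vecone)\vee y,\ x\wedge(y+\alpha\vecone)\bigr)$ for any $\alpha$ (in that target, the coordinates with $x_i-y_i\geq\alpha$ move by $\alpha$ and all the others are \emph{swapped} between $x$ and $y$). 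Compare the paper's proof of Proposition~\ref{prop:LtoDDMC}, which iterates (4) while alternating the roles of the two points and lands on the directed midpoint pair $(\dmid(x,y),\dmid(y,x))$ --- still not the translation-submodular pair. Reaching (\ref{trans-submo}) from (4) requires a different bookkeeping (or a detour through (2)), so this implication as written would fail.
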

Theorem~\ref{thm:L-char} yields the next property.
\begin{proposition}\label{prop:LtoDDMC}
Any L$^\natural $-convex function is DDM-convex.
\end{proposition}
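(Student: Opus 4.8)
The plan is to prove the inequality $f(x)+f(y)\ge f(\dmid(x,y))+f(\dmid(y,x))$ by induction on the total gap $G(x,y)=\sum_{i=1}^n|x_i-y_i|$, using the steepest-exchange characterization, Theorem~\ref{thm:L-char}(4). The base case is $\|x-y\|_\infty\le 1$: by Proposition~\ref{prop:basic}(3) we have $\dmid(x,y)=x$ and $\dmid(y,x)=y$, so the inequality holds trivially. Any pair that is not a base case satisfies $\|x-y\|_\infty\ge 2$, and then $\max_i(x_i-y_i)\ge 2$ or $\max_i(y_i-x_i)\ge 2$; since the desired inequality is symmetric in $x$ and $y$, I may assume the former.

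For the inductive step, put $M=\max_i(x_i-y_i)\ge 2$ and $B=\argmax_i(x_i-y_i)$. Because $\max_i(x_i-y_i)>0$ we have $y\not\ge x$, so applying Theorem~\ref{thm:L-char}(4) with the roles of $x$ and $y$ interchanged yields
\[
 f(x)+f(y)\ \ge\ f(x-\vecone_B)+f(y+\vecone_B).
\]
Write $x'=x-\vecone_B$ and $y'=y+\vecone_B$. Every $i\in B$ satisfies $x_i-y_i=M\ge 2$, so $G(x',y')=G(x,y)-2|B|<G(x,y)$, and the induction hypothesis applies to $(x',y')$. Chaining it with the displayed inequality will finish the step, \emph{provided} the directed midpoints are unchanged, i.e. $\dmid(x',y')=\dmid(x,y)$ and $\dmid(y',x')=\dmid(y,x)$.

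This invariance is the point that needs care. For $i\notin B$ it is immediate. For $i\in B$ the exchange preserves the coordinate sum, $x'_i+y'_i=x_i+y_i$, so $\lceil(x'_i+y'_i)/2\rceil$ and $\lfloor(x'_i+y'_i)/2\rfloor$ are literally the same two integers as before; the only thing that could change is which of them $\dmid(x',y')_i$ and $\dmid(y',x')_i$ select. Since $x'_i-y'_i=M-2\ge 0$ we still have $x'_i\ge y'_i$, so $\dmid(x',y')_i$ still rounds up and $\dmid(y',x')_i$ still rounds down, exactly as for $(x,y)$. The only delicate case is $M=2$, where $x'_i=y'_i$ and the sign of the gap vanishes; but then $x_i+y_i$ is even, $(x_i+y_i)/2$ is an integer, and rounding up and rounding down coincide, so the selection is immaterial. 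Hence the directed midpoints are preserved, and the induction closes.

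I expect the main obstacle to be exactly this bookkeeping of the rounding direction under the steepest exchange, in particular the boundary case $M=2$. It is worth noting why I route the argument through Theorem~\ref{thm:L-char}(4) rather than the discrete midpoint convexity of Theorem~\ref{thm:L-char}(2): a single application of (2) rounds \emph{all} coordinates in the same direction and so collapses to $\lceil(x+y)/2\rceil$ and $\lfloor(x+y)/2\rfloor$, which generally differ from $\dmid(x,y)$ and $\dmid(y,x)$ on the coordinates where $x_i<y_i$ and $x_i+y_i$ is odd. The exchange in (4) instead moves only the coordinates of maximal gap, one unit at a time, and thus respects the per-coordinate rounding direction built into $\dmid(\cdot,\cdot)$, which is precisely what makes the induction preserve the target pair.
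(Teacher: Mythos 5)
Your proof is correct and is essentially the paper's argument: both drive the pair toward $(\dmid(x,y),\dmid(y,x))$ by iterating the steepest exchange of Theorem~\ref{thm:L-char}(4). The only cosmetic difference is bookkeeping --- the paper tracks the invariants (a), (c) of Proposition~\ref{prop:dmid} and lets $\|p-q\|_\infty$ decrease, identifying the terminal pair via that characterization, while you induct on the $\ell_1$-gap and verify directly that each exchange preserves the directed midpoints (including the even-sum case $M=2$).
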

\begin{proof}
Let $f : \ZZ \to \RR \cup \{+\infty\}$ be an L$^\natural$-convex function.
We arbitrarily fix $x,y \in \dom f$ and show that (\ref{DDMC}) holds for $x$ and $y$.
By Proposition~\ref{prop:basic}~(3), (\ref{DDMC}) holds if $\|x-y\|_\infty \leq 1$.
Suppose that $m = \|x-y\|_\infty \geq 2$ and $\|y-x\|_\infty=\max_i \{y_i-x_i\}$ 
(by exchanging the role of $x$ and $y$ if necessary).
Let $A = \argmax_i\{y_i - x_i\}$, $p=x+\vecone_A$ and $q=y-\vecone_A$.
The vectors $p$ and $q$ satisfy the properties (a) and (c) of
Proposition~\ref{prop:dmid} and $\|p-q\|_\infty \leq m$.
Furthermore,  by Theorem~\ref{thm:L-char}~(4), we have
\begin{equation}\label{eq1:lem:LtoDDMC}
 f(x) + f(y) \geq f(p) + f(q).
\end{equation}
If $\|p-q\|_\infty=m$, by applying the above process for the pair $(q,p)$ again,
we obtain $p$ and $q$ with $\|p-q\|_\infty < m$ preserving 
(a), (c) of Proposition~\ref{prop:dmid} and (\ref{eq1:lem:LtoDDMC}).
By repeating this argument, we finally obtain $p$ and $q$ having
(a)$\sim$(c) of Proposition~\ref{prop:dmid} and (\ref{eq1:lem:LtoDDMC}), which
means that (\ref{DDMC}) holds for $x$ and $y$.
\end{proof}

A set $S \subseteq \ZZ^n$ is called an L$^\natural$-convex set if
its indicator function $\delta_S$ is L$^\natural$-convex.

\begin{corollary}
Any L$^\natural$-convex set is DDM-convex.
\end{corollary}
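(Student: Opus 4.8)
The plan is to reduce this set statement to the function statement that was just established, so that almost nothing new has to be proved. By definition, a set $S \subseteq \ZZ^n$ is an L$^\natural$-convex set precisely when its indicator function $\delta_S$ is L$^\natural$-convex, and likewise $S$ is a DDM-convex set precisely when $\delta_S$ is DDM-convex. Hence the entire content of the corollary is the single implication ``$\delta_S$ is L$^\natural$-convex $\Rightarrow$ $\delta_S$ is DDM-convex,'' which is exactly the instance of Proposition~\ref{prop:LtoDDMC} obtained by taking $f = \delta_S$. Concretely, I would start from an L$^\natural$-convex set $S$, invoke the definition to get that $\delta_S : \ZZ^n \to \RR \cup \{+\infty\}$ is an L$^\natural$-convex function, apply Proposition~\ref{prop:LtoDDMC} to conclude that $\delta_S$ is DDM-convex, and finally unwind the definition of a DDM-convex set to read off the desired conclusion.

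There is no genuine obstacle here: the only point to verify is that $\delta_S$ meets the hypotheses of Proposition~\ref{prop:LtoDDMC}, namely that it is a function into $\RR \cup \{+\infty\}$, which holds by construction. If one prefers a self-contained argument that avoids explicitly passing through the proposition, the same reasoning can be phrased directly in terms of membership: for $x,y \in S$ the inequality (\ref{DDMC}) applied to $\delta_S$ says exactly that $\dmid(x,y),\ \dmid(y,x) \in S$, and tracing the proof of Proposition~\ref{prop:LtoDDMC} with $f = \delta_S$ shows that each intermediate pair $p,q$ produced there stays in $\dom \delta_S = S$, since inequality (\ref{eq1:lem:LtoDDMC}) then reduces to the implication that $p,q \in S$ whenever $x,y \in S$. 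Either way the statement is immediate, so the proof will be a single line.
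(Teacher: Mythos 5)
Your proposal is correct and matches the paper's intent exactly: the paper states this corollary without proof, treating it as the immediate specialization of Proposition~\ref{prop:LtoDDMC} to $f = \delta_S$, which is precisely your argument. Nothing further is needed.
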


\begin{example}({\cite[Remark 1] {MMTT-DMC}})
The class of L$^\natural$-convex functions is a proper subclass of 
DDM-convex functions.
For example, 
\[
S=\{(1, 0), (0, 1)\}
\]
is a DDM-convex set, but for $x=(1, 0), y=(0, 1)$, 
$\lceil\frac{x+y}{2}\rceil=(1, 1)\not\in S$
and $\lfloor\frac{x+y}{2}\rfloor=(0, 0)\not\in S$, which means that $S$ 
is not L$^\natural$-convex. 
\hfill\finbox
\end{example}

\begin{example}({\cite[Remark 2] {MMTT-DMC}})
A set $S\subseteq\ZZ^n$ is said to be {\em L$^\natural_2$-convex set} 
if it is the Minkowski sum of two L$^\natural$-convex sets.
DDM-convexity and L$^\natural_2$-convexity are mutually independent.
For example,
\[
\{(1, 0), (0, 1)\}
\]
is a DDM-convex set, but is not L$^\natural_2$-convex.
On the other hand,
\[
S=\{(0, 0, 0, 0), (0, 1, 1, 0), (1, 1, 0, 0), (1, 2, 1, 0)\}
\]
is the Minkowski sum of two L$^\natural$-convex sets 
$S_1 = \{\{(0, 0, 0, 0), (0, 1, 1, 0)\}$ and $S_2 = \{(0, 0, 0, 0),(1, 1, 0, 0)\}$, 
but $S$ is not DDM-convex because for $x=(0, 0, 0, 0)$, $y=(1, 2, 1, 0)$,
$\dmid(x, y)=(0, 1, 0, 0)\not\in S$ and $\dmid(y, x)=(1, 1, 1, 0)\not\in S$.
\hfill\finbox
\end{example}

We next discuss the independence between 
global/local discrete midpoint convexity and DDM-convexity
by showing the independence between discrete midpoint convex sets and
DDM-convex sets.

\begin{example}\label{DMCvsDDMC}
It is easy to show that the set $S$ defined by
\[
S=\{(0, 0, 0), (1, 1, 0), (1, 0, -1), (2, 1, -1)\}
\]
is discrete midpoint convex, but $S$ is not DDM-convex because for
$x=(0, 0, 0)$ and $y=(2, 1, -1)$, we have $\dmid(x, y)=(1, 0, 0)\not\in S$ and
$\dmid(y, x)=(1, 1, -1)\not\in S$.

On the other hand, 
\[
T=\{(0, 0, 0), (1, 0, 0), (1, 1, 1), (2, 1, 1), (1, 1, -1), (2, 1, -1), (1, 1, 0), (2, 1, 0)\}
\]
is DDM-convex.
However, $T$ is not discrete midpoint convex, and moreover,
for any $(\tau_1, \tau_2, \tau_3)\in\{-1, +1\}^3$, the modified set
\[
 \tau \odot T = \{(\tau_1x_1, \tau_2x_2, \tau_3x_3) \mid (x_1, x_2, x_3)\in T\}
\]
is not discrete midpoint convex while it is DDM-convex 
by Proposition~\ref{prop:basic-operationsset}~(3).
The reason is as follows.
Since $T$ is symmetric on the third component, we can assume $\tau_3=+1$.
\begin{itemize}
\item
In the case where $\tau = (\pm 1, +1, +1)$, 
for $x=(0, 0, 0) = \tau \odot (0,0,0)$ and 
$y=(\pm 2, 1, -1)=\tau \odot (2,1,-1)$, 
$\lfloor (x+y)/2 \rfloor = (\pm 1, 0, -1)\not\in \tau \odot T$.
\item
In the case where $\tau = (\pm 1, -1, +1)$,
for $x = (0, 0, 0) = \tau \odot (0,0,0)$ and 
$y = (\pm 2, -1, 1)=\tau \odot (2,1,1)$,
 $\lceil (x+y)/2 \rceil = (\pm 1, 0, 1)\not\in \tau \odot T$.
 \hfill\finbox
\end{itemize}
\end{example}

We finally show that 2-separable convex functions are DDM-convex.
Let $\xi_{i}, \varphi_{ij}, \psi_{ij} : \ZZ \to \RR \cup \{+\infty\}\; 
(i=1,\ldots,n; j \in \{1,\ldots,n\} \setminus \{i\}$) 
be univariate discrete convex functions.
A {\em 2-separable convex function}~\cite{L-extendable} is defined as
a function represented as
\begin{equation}\label{def:2separable}
 f(x)=\sum_{i=1}^n \xi_i(x_i)+
 \sum_{i,j:j\not=i}\varphi_{ij} (x_i-x_j)+
 \sum_{i,j:j\not=i} \psi_{ij}(x_i+x_j)\qquad (x\in\ZZ^n).
\end{equation}
It is known that the function $g$ defined by
\[
 g(x)=\sum_{i=1}^n \xi_i(x_i)+
 \sum_{i,j:j\not=i}\varphi_{ij} (x_i-x_j) \qquad (x\in\ZZ^n)
\]
is L$^\natural$-convex~\cite[Proposition 7.9]{DCA}.
By Proposition~\ref{prop:basic-operations}~(4) and 
Proposition~\ref{prop:LtoDDMC},
it is enough to show that each $\psi_{ij}$ is DDM-convex in order to prove 
DDM-convexity of
2-separable convex function
\[
 f(x)=g(x)+ \sum_{i,j:j\not=i} \psi_{ij}(x_i+x_j)\qquad (x\in\ZZ^n).
\]

\begin{lemma}\label{lem:psi2ddmc}
For a univariate discrete convex function $\psi:\ZZ\to\RR\cup\{+\infty\}$,
$f:\ZZ^n\to\RR\cup\{+\infty\}$ defined by
\[
f(x)=\psi(x_1+x_2)\qquad (x\in\ZZ^n)
\]
is DDM-convex.
\end{lemma}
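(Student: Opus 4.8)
The plan is to reduce DDM-convexity of $f$ to a one-dimensional statement about $\psi$. Since $f(x)=\psi(x_1+x_2)$ depends only on the sum $x_1+x_2$, I would fix arbitrary $x,y\in\ZZ^n$, set $s=x_1+x_2$ and $t=y_1+y_2$, and write $p=\dmid(x,y)$, $q=\dmid(y,x)$. Putting $a=p_1+p_2$ and $b=q_1+q_2$, we have $f(p)=\psi(a)$ and $f(q)=\psi(b)$, so the desired inequality $(\ref{DDMC})$ becomes exactly $\psi(s)+\psi(t)\ge\psi(a)+\psi(b)$. If either $s$ or $t$ lies outside $\dom\psi$, the left-hand side is $+\infty$ and there is nothing to prove, so I may assume $s,t\in\dom\psi$.

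The structural input is twofold. First, by Proposition~\ref{prop:basic}~(1) applied componentwise we get $p_i+q_i=x_i+y_i$, hence $a+b=s+t$. Second, I would record the standard fact that for a univariate discrete convex $\psi$, whenever integers $a,b$ satisfy $a+b=s+t$ and $|a-b|\le|s-t|$, they lie in the interval spanned by $s$ and $t$, and convexity of the piecewise-linear interpolation of $\psi$ then yields $\psi(a)+\psi(b)\le\psi(s)+\psi(t)$; this is the usual statement that the sum of a convex function at two points with a fixed total decreases as the points move together. Granting these two facts, the whole proof reduces to establishing the single inequality $|a-b|\le|s-t|$.

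The heart of the argument is therefore to compare $a-b=(p_1-q_1)+(p_2-q_2)$ with $s-t=(x_1-y_1)+(x_2-y_2)$. Setting $d_i=x_i-y_i$ and $e_i=p_i-q_i$ for $i=1,2$, the definition of $\dmid$ (equivalently, properties (b) and (c) of Proposition~\ref{prop:dmid} together with $p_i+q_i=x_i+y_i$) gives $e_i=0$ when $d_i$ is even, $e_i=+1$ when $d_i$ is odd and positive, and $e_i=-1$ when $d_i$ is odd and negative; in particular $e_i$ has the same sign as $d_i$ and $|e_i|\le 1$. When $d_1$ and $d_2$ share a sign (or one of them vanishes), $e_1+e_2$ and $d_1+d_2$ have the same sign and $|e_1+e_2|\le|d_1+d_2|$ follows immediately.

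The delicate case, which I expect to be the main obstacle, is when $d_1$ and $d_2$ have opposite signs. Here the crude bound $|e_1+e_2|\le 1$ suffices whenever $d_1+d_2\ne 0$, since then $|d_1+d_2|\ge 1$. The genuinely tight subcase is $d_1+d_2=0$, i.e.\ $d_1=-d_2$: this forces $d_1$ and $d_2$ to have equal parity, so $e_1$ and $e_2$ are either both $0$ or are $+1$ and $-1$, giving $e_1+e_2=0=d_1+d_2$. This parity bookkeeping is the only subtle point; once it is in place we have $|a-b|=|e_1+e_2|\le|d_1+d_2|=|s-t|$ in all cases, and the one-dimensional convexity fact closes the proof.
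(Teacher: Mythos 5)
Your proposal is correct, and its skeleton coincides with the paper's: both reduce the claim to the one-dimensional majorization fact that $\psi(s)+\psi(t)\ge\psi(a)+\psi(b)$ whenever $a+b=s+t$ and $a,b$ lie between $s$ and $t$, and both note that the sum is preserved because $\dmid(x,y)_i+\dmid(y,x)_i=x_i+y_i$. Where you genuinely diverge is in how the sandwich condition is verified. The paper proves $s\le a\le t$ and $s\le b\le t$ directly, by a three-case analysis on the signs of $x_1-y_1$ and $x_2-y_2$ in which $\dmid(x,y)_1+\dmid(x,y)_2$ is computed explicitly through floor and ceiling manipulations (introducing $c_1=x_1-y_1$, $c_2=y_2-x_2$ and comparing $\lfloor c_1/2\rfloor$ with $\lfloor c_2/2\rfloor$, etc.). You instead observe that the sandwich condition is equivalent, given $a+b=s+t$, to the single inequality $|a-b|\le|s-t|$, and that the componentwise difference $e_i=p_i-q_i$ is nothing but the signed parity indicator of $d_i=x_i-y_i$ (equal to $\mathrm{sign}(d_i)$ when $d_i$ is odd and $0$ when $d_i$ is even). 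This collapses the floor/ceiling bookkeeping to an almost immediate check, with the only delicate point being the case $d_1+d_2=0$, which you correctly dispose of by noting that $d_1=-d_2$ forces equal parity and hence $e_1+e_2=0$. Your route buys a shorter and more transparent verification of the same intermediate fact; the paper's explicit computation has the minor advantage of exhibiting the values $\dmid(x,y)_1+\dmid(x,y)_2$ concretely, but is otherwise longer. Both arguments rely on the same unproved (but standard) one-dimensional convexity lemma, so you are on equal footing there.
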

\begin{proof}
For every $x,y \in \ZZ^n$, we show that
\begin{equation}\label{eq1:psi2ddmc}
 \psi(x_1{+}x_2) + \psi(y_1{+}y_2) \geq 
   \psi(\dmid(x,y)_1{+}\dmid(x,y)_2) + \psi(\dmid(y,x)_1{+}\dmid(y,x)_2).
\end{equation}
Suppose that $\min\{x_1{+}x_2, y_1{+}y_2\}=x_1{+}x_2$ and 
$\max\{x_1{+}x_2, y_1{+}y_2\}=y_1{+}y_2$ without loss of generality.
By convexity of $\psi$, for every $a,b,p,q \in \ZZ$ such that
(i)~$a+b=p+q$, (ii)~$a \leq p \leq b$ and (iii)~$a \leq q \leq b$, we have
$\psi(a)+\psi(b)\geq \psi(p)+\psi(q)$.
Thus, it is enough to show that
\begin{align}
\label{cond1:psi2dmc}
&(x_1+x_2)+(y_1+y_2)=(\dmid(x, y)_1+\dmid(x, y)_2)+(\dmid(y, x)_1+\dmid(y, x)_2),\\
\label{cond2:psi2dmc}
&x_1+x_2\le \dmid(x, y)_1+\dmid(x, y)_2\le y_1+y_2,\\
\label{cond3:psi2dmc}
&x_1+x_2\le \dmid(y, x)_1+\dmid(y, x)_2\le y_1+y_2.
\end{align}
Obviously, (\ref{cond1:psi2dmc}) holds by
\[
x_1+y_1=\dmid(x, y)_1+\dmid(y, x)_1, \quad x_2+y_2=\dmid(x, y)_2+\dmid(y, x)_2.
\]
To show (\ref{cond2:psi2dmc}) and (\ref{cond3:psi2dmc}) under
$x_1+x_2 \leq y_1+y_2$, we consider
the following three cases separately:
Case 1: $x_1\leq y_1$ and $x_2\leq y_2$,
Case 2: $x_1>y_1$ and $x_2<y_2$,
Case 3: $x_1<y_1$ and $x_2>y_2$.

Case 1 ($x_1\leq y_1$ and $x_2\leq y_2$).
In this case, we have
$\dmid(x, y)_1=\lfloor\frac{x_1+y_1}{2}\rfloor$, 
$\dmid(x, y)_2=\lfloor\frac{x_2+y_2}{2}\rfloor$, 
$\dmid(y, x)_1=\lceil\frac{x_1+y_1}{2}\rceil$ and
$\dmid(y, x)_2=\lceil\frac{x_2+y_2}{2}\rceil$, which imply
\[
x_1\le \dmid(x, y)_1\le \dmid(y, x)_1\le y_1, \quad
x_2\le \dmid(x, y)_2\le \dmid(y, x)_2\le y_2.
\]
Conditions (\ref{cond2:psi2dmc}) and (\ref{cond3:psi2dmc}) are
direct consequences of the above inequalities.

Case 2 ($x_1>y_1$ and $x_2<y_2$).
In this case, under condition $x_1+x_2 \leq y_1+y_2$, $c_1=x_1-y_1$ and
$c_2=y_2-x_2$ satisfy $c_2\ge c_1>0$.
By the following calculations:
\begin{align*}
\dmid(x, y)_1=\left\lceil\frac{x_1+y_1}{2}\right\rceil
=x_1-\left\lfloor\frac{c_1}{2}\right\rfloor
=y_1+\left\lceil\frac{c_1}{2}\right\rceil, \\ 
\dmid(x, y)_2=\left\lfloor\frac{x_2+y_2}{2}\right\rfloor
=x_2+\left\lfloor\frac{c_2}{2}\right\rfloor
=y_2-\left\lceil\frac{c_2}{2}\right\rceil, \\
\dmid(y, x)_1=\left\lfloor\frac{x_1+y_1}{2}\right\rfloor
=x_1-\left\lceil\frac{c_1}{2}\right\rceil
=y_1+\left\lfloor\frac{c_1}{2}\right\rfloor, \\
\dmid(y, x)_2=\left\lceil\frac{x_2+y_2}{2}\right\rceil
=x_2+\left\lceil\frac{c_2}{2}\right\rceil
=y_2-\left\lfloor\frac{c_2}{2}\right\rfloor,
\end{align*}
we have
\begin{align*}
\dmid(x, y)_1{+}\dmid(x, y)_2
=x_1+x_2+
\left(\left\lfloor\frac{c_2}{2}\right\rfloor{-}\left\lfloor\frac{c_1}{2}\right\rfloor\right)
=y_1+y_2-
\left(\left\lceil\frac{c_2}{2}\right\rceil{-}\left\lceil\frac{c_1}{2}\right\rceil\right), \\
\dmid(y, x)_1{+}\dmid(y, x)_2
=x_1+x_2+
\left(\left\lceil\frac{c_2}{2}\right\rceil{-}\left\lceil\frac{c_1}{2}\right\rceil\right)
=y_1+y_2-
\left(\left\lfloor\frac{c_2}{2}\right\rfloor{-}\left\lfloor\frac{c_1}{2}\right\rfloor\right).
\end{align*}
Conditions (\ref{cond2:psi2dmc}) and (\ref{cond3:psi2dmc}) follow from 
$\left\lfloor\frac{c_2}{2}\right\rfloor-\left\lfloor\frac{c_1}{2}\right\rfloor \geq 0$ and
$\left\lceil\frac{c_2}{2}\right\rceil-\left\lceil\frac{c_1}{2}\right\rceil\geq 0$.

Case 3 ($x_1<y_1$ and $x_2>y_2$).
In this case, we can show (\ref{cond2:psi2dmc}) and (\ref{cond3:psi2dmc})
in the same way as Case~2.
\iffalse{ %%%%%
Let $d_1=y_1-x_1$ and $d_2=x_2-y_2$.
We have $d_1\ge d_2>0$ by $x_1+x_2 \leq y_1+y_2$.
By the following calculations,
\begin{align*}
\dmid(x, y)_1=\left\lfloor\frac{x_1+y_1}{2}\right\rfloor
=x_1+\left\lfloor\frac{d_1}{2}\right\rfloor
=y_1-\left\lceil\frac{d_1}{2}\right\rceil, \\ 
\dmid(x, y)_2=\left\lceil\frac{x_2+y_2}{2}\right\rceil
=x_2-\left\lfloor\frac{d_2}{2}\right\rfloor
=y_2+\left\lceil\frac{d_2}{2}\right\rceil, \\
\dmid(y, x)_1=\left\lceil\frac{x_1+y_1}{2}\right\rceil
=x_1+\left\lceil\frac{d_1}{2}\right\rceil
=y_1-\left\lfloor\frac{d_1}{2}\right\rfloor, \\
\dmid(y, x)_2=\left\lfloor\frac{x_2+y_2}{2}\right\rfloor
=x_2-\left\lceil\frac{d_2}{2}\right\rceil
=y_2+\left\lfloor\frac{d_2}{2}\right\rfloor.
\end{align*}
we have
\begin{align*}
\dmid(x, y)_1{+}\dmid(x, y)_2
=x_1+x_2+
\left(\left\lfloor\frac{d_1}{2}\right\rfloor-\left\lfloor\frac{d_2}{2}\right\rfloor\right)
=y_1+y_2-
\left(\left\lceil\frac{d_1}{2}\right\rceil-\left\lceil\frac{d_2}{2}\right\rceil\right), \\
\dmid(y, x)_1{+}\dmid(y, x)_2
=x_1+x_2+
\left(\left\lceil\frac{d_1}{2}\right\rceil-\left\lceil\frac{d_2}{2}\right\rceil\right)
=y_1+y_2-
\left(\left\lfloor\frac{d_1}{2}\right\rfloor-\left\lfloor\frac{d_2}{2}\right\rfloor\right).
\end{align*}
Conditions (\ref{cond2:psi2dmc}) and (\ref{cond3:psi2dmc}) follow from 
$\left\lceil\frac{d_1}{2}\right\rceil-\left\lceil\frac{d_2}{2}\right\rceil \geq 0$ and
$\left\lfloor\frac{d_1}{2}\right\rfloor-\left\lfloor\frac{d_2}{2}\right\rfloor\geq 0$.
}\fi %%%%%
\end{proof}
By Proposition~\ref{prop:basic-operations}~(4), Proposition~\ref{prop:LtoDDMC}
and Lemma~\ref{lem:psi2ddmc}, we have the next property.

\begin{theorem}\label{thm:2sep2ddmc}
Any 2-separable convex function is DDM-convex.
\end{theorem}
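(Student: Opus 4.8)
The plan is to exploit the additive structure of $f$ together with the closure properties already established, so that no new inequality has to be proved. Writing
\[
 f(x)=g(x)+\sum_{i,j:j\neq i}\psi_{ij}(x_i+x_j),\qquad
 g(x)=\sum_{i=1}^n \xi_i(x_i)+\sum_{i,j:j\neq i}\varphi_{ij}(x_i-x_j),
\]
I would first dispose of the term $g$: it is $\textrm{L}^\natural$-convex by the cited \cite[Proposition 7.9]{DCA}, hence DDM-convex by Proposition~\ref{prop:LtoDDMC}. It therefore remains only to show that each coupling term $x\mapsto\psi_{ij}(x_i+x_j)$ is DDM-convex, after which the whole function follows by taking a nonnegative combination.

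Second, I would reduce each coupling term to the normal form treated in Lemma~\ref{lem:psi2ddmc}, which establishes DDM-convexity of $x\mapsto\psi(x_1+x_2)$ for an arbitrary univariate discrete convex $\psi$. For general indices $i\neq j$, choose a permutation $\sigma$ of $(1,\dots,n)$ with $\sigma(1)=i$ and $\sigma(2)=j$; applying Proposition~\ref{prop:basic-operations}~(2) to the DDM-convex function $x\mapsto\psi_{ij}(x_1+x_2)$ produces exactly $x\mapsto\psi_{ij}(x_{\sigma(1)}+x_{\sigma(2)})=\psi_{ij}(x_i+x_j)$, which is therefore DDM-convex.

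Finally, I would assemble the pieces. Since $f$ is the sum of $g$ and the finitely many terms $\psi_{ij}(x_i+x_j)$, all appearing with coefficient $1\geq 0$, repeated application of Proposition~\ref{prop:basic-operations}~(4) yields DDM-convexity of $f$. As (4) is stated for two summands, I would note that it extends to any finite nonnegative combination by a trivial induction on the number of terms.

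The argument has no genuine analytic obstacle: all the substantive work—the convexity bookkeeping for $\dmid$ carried out in Cases~1--3—was already completed in Lemma~\ref{lem:psi2ddmc}. The only points requiring care are the two reductions above, namely verifying that the permutation $\sigma$ converts $\psi_{ij}(x_1+x_2)$ into $\psi_{ij}(x_i+x_j)$ so that Proposition~\ref{prop:basic-operations}~(2) applies, and confirming that the two-function closure of Proposition~\ref{prop:basic-operations}~(4) propagates to the full finite sum. Both are routine, so the theorem is essentially a corollary of the preceding results.
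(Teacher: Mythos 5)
Your proposal is correct and follows essentially the same route as the paper: decompose $f$ into the L$^\natural$-convex part $g$ (DDM-convex by Proposition~\ref{prop:LtoDDMC}) plus the coupling terms, handle each $\psi_{ij}(x_i+x_j)$ via Lemma~\ref{lem:psi2ddmc}, and combine with Proposition~\ref{prop:basic-operations}~(4). You merely make explicit two steps the paper leaves implicit (the permutation reduction to coordinates $1,2$ and the induction extending the two-summand closure to finite sums), both of which are routine and correctly justified.
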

Hence, 2-separable convex function is integrally convex.

%%%%%%%%%%%%%%%%%%%%%%%%%%%%%%%%%%%%%%%%%%%%%%%%
\section{Parallelogram inequality}\label{para-ineq}

Parallelogram inequality was originally proposed in \cite{MMTT-DMC} for
globally/locally discrete midpoint convex functions. 
By borrowing arguments from \cite{MMTT-DMC}, we show that
DDM-convex sets/functions have similar properties.

For every pair $(x, y)\in\ZZ^n\times \ZZ^n$ with $\|y-x\|_\infty=m$, 
we consider sets defined by
\begin{equation}\label{ddmcparalleloset}
A_k=\{i \mid y_i-x_i\ge k\}, \quad B_k=\{i \mid y_i-x_i\le -k\} \qquad(k=1, \dots,  m),
\end{equation}
for which
$A_1\supseteq A_2\supseteq\cdots \supseteq A_m$, 
$B_1\supseteq B_2\supseteq\cdots \supseteq B_m$, 
$A_1\cap B_1=\emptyset$ and $A_m\cup B_m\not=\emptyset$.

We first show the following property of DDM-convex sets.

\begin{theorem}\label{thm:parallelogramset}
Let $S\subseteq\ZZ^n$ be a DDM-convex set, $x, y\in S$ with 
$\|y-x\|_\infty=m$, and $J\subseteq\{1, 2, \dots, m\}$.
If $\{A_k\}$ and $\{B_k\}$ are defined by $(\ref{ddmcparalleloset})$ and
$d=\sum_{k\in J}(\vecone_{A_k}-\vecone_{B_k})$, we have 
$x+d\in S$ and $y-d\in S$.
\end{theorem}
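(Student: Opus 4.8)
The plan is to prove the statement by induction on $m=\|y-x\|_\infty$, after two reductions. First, I would reduce the two conclusions to one: writing $J^{c}=\{1,\dots,m\}\setminus J$ and $d^{(J)}=\sum_{k\in J}(\vecone_{A_k}-\vecone_{B_k})$, the identity $d^{(J)}+d^{(J^{c})}=y-x$ gives $y-d^{(J)}=x+d^{(J^{c})}$. Hence it suffices to prove ``$x+d^{(J)}\in S$ for every $J\subseteq\{1,\dots,m\}$'', and then $y-d^{(J)}\in S$ follows by applying this to $J^{c}$. It is convenient to state the inductive claim intrinsically: for any pair $u,v\in S$ and any $K\subseteq\{1,\dots,\|v-u\|_\infty\}$, the ``staircase point'' $u+\sum_{k\in K}(\vecone_{A_k(u,v)}-\vecone_{B_k(u,v)})$ lies in $S$, where $A_k(u,v),B_k(u,v)$ are defined from $(u,v)$ as in $(\ref{ddmcparalleloset})$.

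The base case $\|v-u\|_\infty\le 1$ is immediate: by Proposition~\ref{prop:basic}~(3) the only staircase points are $u$ and $v$ themselves, which are in $S$. For the inductive step I would set $p=\dmid(u,v)$ and $q=\dmid(v,u)$, which both lie in $S$ by DDM-convexity of $S$. The technical core is the arithmetic of $\dmid$: writing $c_i=v_i-u_i$, one checks $(p-u)_i=\operatorname{sign}(c_i)\lfloor |c_i|/2\rfloor$ and $(q-u)_i=\operatorname{sign}(c_i)\lceil |c_i|/2\rceil$, so that $\|p-u\|_\infty,\|q-u\|_\infty\le\lceil m/2\rceil<m$, and the layer sets of the half-distance pairs formed from $\{u,v,p,q\}$ are exactly the parity-selected layers of $(u,v)$: for instance $A_k(u,q)=A_{2k-1}(u,v)$ and $A_k(u,p)=A_{2k}(u,v)$ (and symmetrically for $(p,v)$, $(q,v)$ and for the $B$-sets). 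Since each such half pair has $\ell_\infty$-distance strictly less than $m$ and its endpoints lie in $S$, the inductive hypothesis applies to it.

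The heart of the proof is then a \emph{routing} step: given $J$ and the target $w=u+d^{(J)}$, I would split $J$ according to the parity of its elements and show that $w$ is a staircase point of one of the half pairs formed from $u,v,p,q$, reading off the corresponding sub-selection through the index-halving identities above, and finally verifying membership using conditions (a)--(c) of Proposition~\ref{prop:dmid}. Once the correct half pair is identified, the inductive hypothesis places $w$ in $S$, completing the step.

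I expect the routing step to be the main obstacle, and it is genuinely subtle rather than routine. The difficulty is that $w$ is determined by a single set $J$ shared across all coordinates, whereas a coordinate with $|c_i|=1$ may force $w$ into the ``upper'' half while a coordinate with larger $|c_i|$ simultaneously forces it into the ``lower'' half; a naive bisection into just $(u,p)$ and $(q,v)$ therefore fails. What rescues the argument is the nesting $A_1\supseteq A_2\supseteq\cdots$ together with the shared selection $J$: these constraints force the coordinatewise choices to be compatible with one of the pairs $(u,p),(u,q),(p,v),(q,v)$, so I would organize the routing as a case analysis on the parities occurring in $J$ and on the componentwise comparison of the partial selection with the midpoint $p,q$. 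The remaining ingredients---the complement reduction, the base case, and the fact that $p,q\in S$---are straightforward consequences of Propositions~\ref{prop:dmid} and~\ref{prop:basic} and the definition~$(\ref{DDMC})$.
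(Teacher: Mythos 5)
Your skeleton is sound up to a point: the reduction via complements ($y-d^{(J)}=x+d^{(J^{c})}$), the intrinsic inductive claim, the base case, and the parity identities $A_k(u,q)=A_{2k-1}(u,v)$, $A_k(u,p)=A_{2k}(u,v)$ (with $q=\dmid(v,u)$, $p=\dmid(u,v)$) are all correct and coincide with what the paper establishes in its Propositions on the recursive decomposition $D(\cdot)$. The gap is in the routing step, and it is fatal as stated: it is \emph{not} true that every staircase point $w=u+d^{(J)}$ is a staircase point of one of the four pairs $(u,p),(u,q),(p,v),(q,v)$. Concretely, take $n=2$, $u=(0,0)$, $v=(4,2)$, so $A_1=A_2=\{1,2\}$, $A_3=A_4=\{1\}$, and $J=\{1,2\}$, giving $w=(2,2)$. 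Here $p=q=(2,1)$, the staircase points of $(u,p)$ are $\{(0,0),(1,0),(1,1),(2,1)\}$ and those of $(p,v)$ are $\{(2,1),(3,1),(3,2),(4,2)\}$; the point $(2,2)$ belongs to neither, yet the theorem asserts $(2,2)\in S$. The four half pairs only produce selections $J$ that are entirely odd, entirely even, ``all evens plus some odds'', or ``all odds plus some evens'', and a generic $J$ mixing parities is of none of these forms. The nesting $A_1\supseteq A_2\supseteq\cdots$ does not rescue this.

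The paper's proof supplies the missing idea: a mixed-parity selection requires applying the induction hypothesis to a pair that is \emph{not} among the four half pairs. Writing $x'=\dmid(x,y)$ and $y'=\dmid(y,x)$, one first applies the hypothesis to $(x,y')$ and to $(x',y)$ (both of $\ell_\infty$-distance $<m$ and with layer sets equal to the odd-indexed layers of $(x,y)$) to place \emph{two} points in $S$: $u'=x+\sum\{d \mid d\in E_{\mathrm{odd}}\}$ and $v'=x'+\sum\{d \mid d\in E_{\mathrm{odd}}\}$, where $E_{\mathrm{odd}}$ is the odd part of the selection. Then, since $v'-u'=x'-x=\dmid(\veczero,y-x)$ has $\ell_\infty$-norm $<m$ and its layer sets are exactly the even-indexed layers of $(x,y)$, a \emph{third} application of the induction hypothesis, to the freshly certified pair $(u',v')$, appends the even part of the selection and lands on $w$. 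In the counterexample above this chain is $(0,0)\to u'=(1,1)$, $x'\to v'=(3,2)$, and then $w=(2,2)$ as a staircase point of $((1,1),(3,2))$. Without this composition through an intermediate pair whose membership in $S$ is itself obtained from the induction, the argument cannot reach mixed-parity staircase points, so you would need to add this step to complete the proof.
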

To show this theorem, it is enough to verify
\begin{equation}\label{para-step0}
x+\sum_{k\in J}(\vecone_{A_k}-\vecone_{B_k})\in S 
\qquad (\forall J \subseteq \{1, 2, \dots, m\}).
\end{equation}
We first show that the decomposition 
$\sum_{k=1}^m (\vecone_{A_k} - \vecone_{B_k})$ of $y-x$ can be constructed
by using the operation $\dmid(\cdot,\cdot)$.

For every $x\in \ZZ^n$, let us consider multiset $D(x)$ of vectors by
the following recursive formula:
\begin{equation}\label{paralleloset1}
D(x)=
\begin{cases}
    \emptyset & (x=\veczero), \\
    \{x\} & (\|x\|_\infty=1), \\
    \{\dmid(x, \veczero), \dmid(\veczero, x)\} & (\|x\|_\infty=2), \\
    D(\dmid(x, \veczero))\cup D(\dmid(\veczero, x)) & (\|x\|_\infty\ge3),
\end{cases}
\end{equation}
where $\veczero$ denotes the $n$-dimensional zero vector. 
We give several propositions.

\begin{proposition}\label{prop:DecompoA}
If a multiset $\{d^k\in\{-1, 0, 1\}^n{\setminus}\{\veczero\} \mid k=1, \dots, m\}$ satisfies
\begin{equation}\label{icondition}
1\geq d^1_i \geq d^2_i \geq \cdots \geq d^m_i \geq 0 \;\mbox{ or }\;
 -1 \leq d^1_i \leq d^2_i \leq \cdots \leq d^m_i \leq 0  
\end{equation}
for each $ i\in\{1, \dots, n\}$, then
\begin{equation}\label{Dequal}
D\left(\sum_{k = 1}^m d^k\right)=\{d^k \mid k = 1,\ldots, m \}.
\end{equation}
\end{proposition}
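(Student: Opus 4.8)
The plan is to prove $(\ref{Dequal})$ by induction on $m$, matching the induction to the branching of the recursive definition $(\ref{paralleloset1})$ of $D$. Write $x=\sum_{k=1}^m d^k$. The first step is to read off the rigid structure that condition $(\ref{icondition})$ imposes: for each coordinate $i$ the sequence $d^1_i,\dots,d^m_i$ is monotone, so its nonzero entries (all equal to the common value $\operatorname{sign}(x_i)$) occupy exactly the small indices $k=1,\dots,|x_i|$, and there are precisely $|x_i|$ of them. Since every $d^k\neq\veczero$, in particular $d^m\neq\veczero$, so some coordinate has $|x_i|=m$; combined with $|x_i|\le m$ for all $i$ (each $d^k_i\in\{-1,0,1\}$), this gives $\|x\|_\infty=m$. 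This identity is exactly what makes induction on $m$ agree with the branch of $(\ref{paralleloset1})$ chosen according to $\|x\|_\infty$.

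The crucial step is the decomposition lemma: with the single ordering provided by $(\ref{icondition})$,
\[
 \sum_{k\ \text{odd}} d^k=\dmid(x,\veczero), \qquad
 \sum_{k\ \text{even}} d^k=\dmid(\veczero,x).
\]
I would check this coordinatewise. Directly from the definition of $\dmid(\cdot,\cdot)$, the vector $\dmid(x,\veczero)$ rounds each $x_i/2$ away from $0$, while $\dmid(\veczero,x)$ rounds it toward $0$. Because the nonzero entries $d^k_i$ occupy precisely the indices $k=1,\dots,|x_i|$, the odd indices among them number $\lceil|x_i|/2\rceil$ and the even ones $\lfloor|x_i|/2\rfloor$; multiplying by $\operatorname{sign}(x_i)$ reproduces exactly $\dmid(x,\veczero)_i$ and $\dmid(\veczero,x)_i$. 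I would also record that the odd-indexed sub-multiset $\{d^1,d^3,\dots\}$ and the even-indexed sub-multiset $\{d^2,d^4,\dots\}$ are again admissible: in every coordinate each is a subsequence of a monotone sequence, so $(\ref{icondition})$ is inherited, and all their members remain nonzero.

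With these in hand the induction closes. For $m=0$ we have $x=\veczero$ and $D(\veczero)=\emptyset$; for $m=1$ we have $\|x\|_\infty=1$ and $D(x)=\{x\}=\{d^1\}$. For $m=2$ we have $\|x\|_\infty=2$, so $(\ref{paralleloset1})$ gives $D(x)=\{\dmid(x,\veczero),\dmid(\veczero,x)\}$, which equals $\{d^1,d^2\}$ by the decomposition lemma. For $m\ge3$ we have $\|x\|_\infty=m\ge3$, so $(\ref{paralleloset1})$ gives $D(x)=D(\dmid(x,\veczero))\cup D(\dmid(\veczero,x))$; the odd- and even-indexed sub-multisets have sizes $\lceil m/2\rceil<m$ and $\lfloor m/2\rfloor<m$, satisfy the hypotheses, and sum to $\dmid(x,\veczero)$ and $\dmid(\veczero,x)$ respectively, so the induction hypothesis identifies these two $D$-values with the sub-multisets, whose union is $\{d^k\mid k=1,\dots,m\}$.

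The main obstacle is the decomposition lemma: the whole content of the argument is the coordinatewise parity count showing that the away-from-zero half $\dmid(x,\veczero)$ collects precisely the odd-indexed vectors and the toward-zero half $\dmid(\veczero,x)$ the even-indexed ones. The supporting facts $\|x\|_\infty=m$ and the inheritance of $(\ref{icondition})$ by subsequences are what let the recursion of $(\ref{paralleloset1})$ be matched to halving the index set; once they are in place, the induction is immediate.
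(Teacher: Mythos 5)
Your proof is correct and follows essentially the same route as the paper: induction on $m$, splitting the multiset into odd- and even-indexed parts, and a coordinatewise parity count showing that $\dmid(x,\veczero)$ and $\dmid(\veczero,x)$ equal the odd and even partial sums, respectively. The extra observations you record explicitly (that $\|x\|_\infty=m$ and that condition (\ref{icondition}) is inherited by the sub-multisets) are used implicitly in the paper's argument, so there is no substantive difference.
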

\begin{proof}
We prove the assertion by induction on $m$.
The assertion obviously holds if $m \leq 1$.

Suppose that $m=2$.
By (\ref{icondition}), we have
\[
\|d^{1}{+}d^{2}\|_\infty = 2, \quad
\dmid(d^{1}{+}d^{2}, \veczero)=d^{1}, \quad 
\dmid(\veczero, d^{1}{+}d^{2})=d^{2},
\]
which, together with (\ref{paralleloset1}), imply the assertion.

Suppose that $m \geq 3$.
Let $K=\{1, 2, \dots, m\}$, 
$K^\mathrm{O}=\{k \in K \mid k \mbox{ is odd}\}$ and
$K^\mathrm{E}=\{k \in K \mid k \mbox{ is even}\}$.
By induction hypothesis together with $|K^\mathrm{O}|, |K^\mathrm{E}|<|K|$,
we obtain
\begin{equation}\label{oddevenD}
D\left(\sum_{k\in K^\mathrm{O}}d^k\right)=\{d^k \mid k\in K^\mathrm{O}\}, \quad
D\left(\sum_{k\in K^\mathrm{E}}d^k\right)=\{d^k \mid k\in K^\mathrm{E}\}.
\end{equation}
Furthermore, the claim below guarantees that
\begin{equation}\label{oddevenmu}
\mu\left(\sum_{k\in K}d^k, \veczero\right)
=\sum_{k\in K^\mathrm{O}}d^k, \quad
\mu\left(\veczero, \sum_{k\in K}d^k\right)
=\sum_{k\in K^\mathrm{E}}d^k.
\end{equation}
By combining (\ref{paralleloset1}), (\ref{oddevenD}) and (\ref{oddevenmu}), we have
\begin{align*}
D\left(\sum_{k\in K}d^k\right)
&=D\left(\mu\left(\sum_{k\in K}d^k, \veczero\right)\right)\cup D\left(\mu\left(\veczero, \sum_{k\in K}d^k\right)\right)\\
&=D\left(\sum_{k\in K^\mathrm{O}}d^k\right)\cup D\left(\sum_{k\in K^\mathrm{E}}d^k\right)\\
&=\{d^k \mid k\in K^\mathrm{O}\}\cup \{d^k \mid k\in K^\mathrm{E}\}\\
&=\{d^k \mid k\in K\}.
\end{align*}

\noindent \textbf{Claim:}
(i) $\mu\left(\sum_{k\in K}d^k, \veczero\right)=\sum_{k\in K^\mathrm{O}}d^k$, and 
(ii) $\mu\left(\veczero, \sum_{k\in K}d^k\right)=\sum_{k\in K^\mathrm{E}}d^k$.

\smallskip\noindent(Proof) 
We show (i) (and can show (ii) in the same way).
Let us fix $i \in \{1,\ldots,n\}$.
Assume that $\left(\sum_{k\in K}d^k\right)_i=l > 0$.
Since $d^{1}_i=\cdots=d^{l}_i=1$ and 
$d^{{l+1}}_i=\cdots=d^{m}_i=0$ by (\ref{icondition}),
we have 
\[
\mu\left(\sum_{k\in K}d^k, \veczero\right)_i
=\left\lceil\frac{l}{2}\right\rceil
=|\{1, \dots, l\} \cap \{k \mid k \mbox{ is odd}\}|
=\left(\sum_{k\in K^\mathrm{O}}d^k\right)_i.
\]
In the case where $\left(\sum_{k\in K}d^k\right)_i=-l < 0$, 
we have  $d^{1}_i=\cdots=d^{l}_i=-1$ and 
$d^{{l+1}}_i=\cdots=d^{m}_i=0$ by (\ref{icondition}),
and hence
\[
\mu\left(\sum_{k\in K}d^k, \veczero\right)_i
=\left\lfloor\frac{-l}{2}\right\rfloor
=-\left\lceil\frac{l}{2}\right\rceil
=-|\{1, \dots, l\}\cap \{k \mid k \mbox{ is odd}\}|
=\left(\sum_{k\in K^\mathrm{O}}d^k\right)_i.
\]
If $\left(\sum_{k\in K}d^k\right)_i=0$, by (\ref{icondition}), 
we have $d^{1}_i=\cdots=d^{m}_i=0$ and 
\[
\mu\left(\sum_{k\in K}d^k, \veczero\right)_i
=0
=\left(\sum_{k\in K^\mathrm{O}}d^k\right)_i.
\]
Thus, (i) holds.  (End of the proof of Claim).
\end{proof}

\begin{proposition}\label{prop:step2}
$D(y-x)=\{\vecone_{A_k}-\vecone_{B_k} \mid k=1, \dots, m\}$.
\end{proposition}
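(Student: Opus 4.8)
The plan is to recognize that this proposition is essentially a direct application of Proposition~\ref{prop:DecompoA}. Writing $m = \|y-x\|_\infty$ and setting $d^k := \vecone_{A_k} - \vecone_{B_k}$ for $k = 1, \dots, m$, it suffices to verify that the multiset $\{d^k \mid k = 1, \dots, m\}$ satisfies the hypotheses of Proposition~\ref{prop:DecompoA} and that $\sum_{k=1}^m d^k = y - x$; then (\ref{Dequal}) delivers exactly the claimed identity $D(y-x) = \{\vecone_{A_k} - \vecone_{B_k} \mid k = 1, \dots, m\}$.

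First I would check that each $d^k$ lies in $\{-1,0,1\}^n \setminus \{\veczero\}$. Since $A_1 \cap B_1 = \emptyset$ and $A_k \subseteq A_1$, $B_k \subseteq B_1$ by the nesting recorded after (\ref{ddmcparalleloset}), we have $A_k \cap B_k = \emptyset$, so no coordinate of $d^k$ receives both a $+1$ and a $-1$ contribution; hence $d^k \in \{-1,0,1\}^n$. Moreover $A_k \cup B_k \supseteq A_m \cup B_m \neq \emptyset$, so $d^k \neq \veczero$.

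Next, the monotone-chain condition (\ref{icondition}) and the value of the sum are both obtained by a coordinatewise case analysis on the sign of $y_i - x_i$. If $y_i - x_i = \ell > 0$, then $i \in A_k$ exactly when $k \leq \ell$ and $i \notin B_k$ for every $k$, so $d^k_i = 1$ for $k \leq \ell$ and $d^k_i = 0$ otherwise, giving $1 \geq d^1_i \geq \cdots \geq d^m_i \geq 0$ together with $\sum_k d^k_i = \ell$. Symmetrically, if $y_i - x_i = -\ell < 0$ then $d^k_i = -1$ for $k \leq \ell$ and $0$ otherwise, yielding the increasing chain $-1 \leq d^1_i \leq \cdots \leq d^m_i \leq 0$ and $\sum_k d^k_i = -\ell$; and if $y_i = x_i$ then $d^k_i = 0$ for all $k$, which meets both alternatives in (\ref{icondition}). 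In every case the required chain holds and $\left(\sum_{k=1}^m d^k\right)_i = y_i - x_i$.

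With the hypotheses verified, Proposition~\ref{prop:DecompoA} applies to $\sum_{k=1}^m d^k = y - x$ and yields the assertion. I expect no substantial obstacle here: the real difficulty has already been absorbed into Proposition~\ref{prop:DecompoA}, and what remains is the bookkeeping that translates the defining inequalities of $A_k$ and $B_k$ into the nested structure (\ref{icondition}). The only point requiring a moment's care is ensuring $d^k \neq \veczero$ for every $k \leq m$, which is precisely why the relation $A_m \cup B_m \neq \emptyset$ (rather than merely $A_1 \cup B_1 \neq \emptyset$) is recorded alongside (\ref{ddmcparalleloset}).
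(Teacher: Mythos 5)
Your proposal is correct and follows essentially the same route as the paper: define $d^k=\vecone_{A_k}-\vecone_{B_k}$, check that $\sum_{k=1}^m d^k = y-x$ and that condition (\ref{icondition}) holds, and invoke Proposition~\ref{prop:DecompoA}. The paper states these verifications without detail, whereas you spell out the coordinatewise case analysis and the nonzero-ness of each $d^k$; the substance is identical.
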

\begin{proof}
By the construction (\ref{ddmcparalleloset}), 
$y-x=\sum_{k=1}^m(\vecone_{A_k}-\vecone_{B_k})$.
By defining $d^k=\vecone_{A_k}-\vecone_{B_k}\ (k=1, \dots, m)$, 
condition (\ref{icondition}) of Proposition~\ref{prop:DecompoA} holds.
The assertion is an immediate consequence of (\ref{Dequal}).
\end{proof}

By Proposition~\ref{prop:step2}, (\ref{para-step0}) can be
rewritten as
\begin{equation}\label{step5-1}
x+\sum\{d \mid d\in E\}\in S \qquad (\forall E \subseteq D(y-x)).
\end{equation}
Therefore Theorem~\ref{thm:parallelogramset} can be shown by
the following proposition.

\begin{proposition}\label{prop:step5-1}
For a DDM-convex set $S$ and $x,y \in S$, $(\ref{step5-1})$ holds.
\end{proposition}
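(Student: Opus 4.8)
The plan is to prove $(\ref{step5-1})$ by induction on $m=\|y-x\|_\infty$, working throughout with the equivalent form $(\ref{para-step0})$ guaranteed by Proposition~\ref{prop:step2}. Write $d^k=\vecone_{A_k}-\vecone_{B_k}$, so that $D(y-x)=\{d^k\mid k=1,\dots,m\}$ and a subfamily $E\subseteq D(y-x)$ corresponds to an index set $J\subseteq\{1,\dots,m\}$; the goal is then $x+\sum_{k\in J}d^k\in S$. For the base cases $m\le 2$ I would argue directly: as $J$ ranges over the subsets of $\{1,\dots,m\}$, the point $x+\sum_{k\in J}d^k$ takes only the values $x$, $y$, $\dmid(x,y)$ and $\dmid(y,x)$ (using Proposition~\ref{prop:basic}~(1), $\dmid(x,y)+\dmid(y,x)=x+y$), and all four lie in $S$ because $x,y\in S$ and $S$ is DDM-convex.

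For the inductive step $m\ge 3$, I would set $p=\dmid(y,x)$ and $q=\dmid(x,y)$, both of which lie in $S$ by DDM-convexity. By Proposition~\ref{prop:basic}~(4) we have $p-x=\dmid(y-x,\veczero)$ and $q-x=\dmid(\veczero,y-x)$, so the last line of the recursion $(\ref{paralleloset1})$ splits $D(y-x)$ as $D(p-x)\cup D(q-x)$; combined with $(\ref{oddevenmu})$ from the proof of Proposition~\ref{prop:DecompoA}, this identifies $D(p-x)=\{d^k\mid k\text{ odd}\}$ and $D(q-x)=\{d^k\mid k\text{ even}\}$. Moreover $\|p-x\|_\infty=\lceil m/2\rceil$ and $\|q-x\|_\infty=\lfloor m/2\rfloor$ are both strictly less than $m$, and since $p+q=x+y$ gives $y-q=p-x$, also $\|y-q\|_\infty<m$. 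These three strict inequalities are exactly what make the induction hypothesis applicable on the pairs used below.

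Given $J\subseteq\{1,\dots,m\}$, I would split it into its odd part $J^{\mathrm{O}}$ and even part $J^{\mathrm{E}}$ and put $z=x+\sum_{k\in J^{\mathrm{O}}}d^k$. First, applying the induction hypothesis to the pair $(x,p)$ and the subfamily $\{d^k\mid k\in J^{\mathrm{O}}\}\subseteq D(p-x)$ yields $z\in S$. Next, applying it to the pair $(q,y)$, whose difference decomposes as $D(y-q)=D(p-x)=\{d^k\mid k\text{ odd}\}$, and to the same subfamily $\{d^k\mid k\in J^{\mathrm{O}}\}$ yields $q+\sum_{k\in J^{\mathrm{O}}}d^k\in S$; since this point equals $z+(q-x)$, I obtain a second point $w:=z+(q-x)\in S$. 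Finally, I would apply the induction hypothesis once more to the pair $(z,w)$, noting $w-z=q-x$ with $\|q-x\|_\infty<m$ and $D(w-z)=D(q-x)=\{d^k\mid k\text{ even}\}$, and take the subfamily indexed by $J^{\mathrm{E}}$; this gives $z+\sum_{k\in J^{\mathrm{E}}}d^k=x+\sum_{k\in J}d^k\in S$, which is $(\ref{para-step0})$ for $J$, completing the induction.

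The main obstacle is bookkeeping rather than a deep new idea: one must check that the three invocations of the induction hypothesis all use a single, consistent labeling of the vectors $d^k$, i.e. that $D(p-x)$, $D(q-x)$ and $D(y-q)$ really are the odd and even subfamilies of the one multiset $\{d^k\}$ produced by Proposition~\ref{prop:step2} together with $(\ref{oddevenmu})$, and that the intermediate point genuinely satisfies $w=z+(q-x)=q+\sum_{k\in J^{\mathrm{O}}}d^k$. Once these identifications are verified, the three applications chain together precisely to transport $x$ to $x+\sum_{k\in J}d^k$ while staying inside $S$.
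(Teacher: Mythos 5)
Your proof is correct and follows essentially the same route as the paper's: the paper also inducts on $\|y-x\|_\infty$, splits $D(y-x)$ via the recursion into $D(\dmid(y-x,\veczero))\cup D(\dmid(\veczero,y-x))$, and applies the induction hypothesis three times to the pairs $(x,\dmid(y,x))$, $(\dmid(x,y),y)$ and then to the two intermediate points (your $z$ and $w$ are its $u$ and $v$). Your explicit odd/even indexing of the $d^k$ via the claim in Proposition~\ref{prop:DecompoA} is a harmless restatement of the paper's set-theoretic bookkeeping with $E\cap D(\dmid(y-x,\veczero))$ and its complement.
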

\begin{proof}
We prove (\ref{step5-1}) for $x,y$ by induction on $\|y-x\|_\infty$.
If $\|y-x\|_\infty \leq 1$, (\ref{step5-1}) trivially holds.
If $\|y-x\|_\infty=2$, then $D(y-x)=\{\dmid(y{-}x, \veczero), \ \dmid(\veczero, y{-}x)\}$.
Since $S$ is DDM-convex,  we have
\begin{align*}
x+\dmid(y{-}x, \veczero) &=\dmid(y, x)\in S, \\
x+\dmid(\veczero, y{-}x) &=\dmid(x, y)\in S,
\end{align*}
which guarantee that (\ref{step5-1}) holds.

Suppose that $\|y-x\|_\infty\ge3$, and (\ref{step5-1}) holds for every
$x'', y''\in S$ with $\|x''-y''\|_\infty<\|x-y\|_\infty$.
We fix $E\subseteq D(y-x)$ arbitrarily.
Let $x'=\dmid(x, y)$ and $y'=\dmid(y, x)$.
Then we have $y' - x = y - x' = \dmid(y-x,\veczero)$.
By DDM-convexity of $S$, $x'$ and $y'$ also belong to $S$.
By Proposition~\ref{prop:basic}~(4) and the assumption $\|y-x\|_\infty\ge3$, we have
\[
  \|y'-x\|_\infty = \|y-x'\|_\infty = 
  \|\dmid(y-x, \veczero)\|_\infty < \|y-x\|_\infty.
\]
By induction hypothesis, (\ref{step5-1}) holds for $(x, y')$ and $(x', y)$, 
and furthermore, by the equality $D(y'-x)=D(y-x')=D(\dmid(y-x, \veczero))$,
we have
\begin{align}\label{step5-2}
\begin{split}
&u=x+\sum\{d \mid d\in E\cap D(\dmid(y-x, \veczero))\}\in S, \\
&v=x'+\sum\{d \mid d\in E\cap D(\dmid(y-x, \veczero))\}\in S.
\end{split}
\end{align}
We also have $v-u = x'-x = \dmid(\veczero, y-x)$ and
\[
  \|v-u\|_\infty=\|x'-x\|_\infty=\|\dmid(\veczero, y-x)\|_\infty<\|y-x\|_\infty,
\]
which, together with the induction hypothesis,
guarantee that (\ref{step5-1}) holds for $(u,v)$.
Moreover, by (\ref{paralleloset1}), 
$D(y-x) = D(\dmid(y-x, \veczero))\cup D(\dmid(\veczero, y-x))$ includes $E$,
and hence, 
$D(v-u)=D(\dmid(\veczero, y-x))$ includes $E\setminus D(\dmid(y-x, \veczero))$.
Thus, we have
\begin{equation}\label{step5-3}
w=u+\sum\{d \mid d\in E\setminus D(\dmid(y-x, \veczero))\}\in S.
\end{equation}
By (\ref{step5-2}) and (\ref{step5-3}), we obtain
\[
w=x+\sum\{d \mid d\in E\}\in S,
\]
which implies (\ref{step5-1}).
\end{proof}

\medskip
We denote by DDMC($k$) and by DDMC($\geq$$k$) the classes of functions
$f : \ZZ^n \to \RR \cup \{+\infty\}$ that satisfy DDM-convexity (\ref{DDMC})
for all $x,y \in \ZZ^n$ with $\|x-y\|_\infty = k$ and 
for all $x,y \in \ZZ^n$ with $\|x-y\|_\infty \geq k$, respectively.
Before presenting parallelogram inequality for DDM-convex functions,
we give a useful property.

\begin{theorem}\label{thm:ddmcparalleloineq}
Let $f:\ZZ^n \to \RR \cup \{+\infty\}$ be a function in {\rm DDMC($2$)}
such that $\dom f$ is DDM-convex.
For $x\in \dom f$ and $y\in\ZZ^n$ with $\|y-x\|_\infty=m$, and for any partition
$(I,J)$ of $\{1, \dots, m\}$, we consider 
\[
d_1=\sum_{i\in I}(\vecone_{A_i}-\vecone_{B_i}), \quad
 d_2=\sum_{j\in J}(\vecone_{A_j}-\vecone_{B_j}),
\]
where $A_k, B_k\ (k=1, \dots, m)$ are the sets defined by 
$(\ref{ddmcparalleloset})$.
Then we have
\begin{equation}\label{ddmcparallelotheo1}
f(x)+f(x+d_1+d_2)\geq f(x+d_1)+f(x+d_2).
\end{equation}
\end{theorem}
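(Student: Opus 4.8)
The plan is to recast (\ref{ddmcparallelotheo1}) as a supermodularity statement on the Boolean cube and to reduce it to the single-scale hypothesis {\rm DDMC}(2). Write $d^k=\vecone_{A_k}-\vecone_{B_k}$ for $k=1,\dots,m$, so that $\sum_{k=1}^m d^k=y-x$ and, by the definition (\ref{ddmcparalleloset}), the family $\{d^k\}$ obeys the nesting condition (\ref{icondition}). For $S\subseteq\{1,\dots,m\}$ set $z_S=x+\sum_{k\in S}d^k$ and $g(S)=f(z_S)$; then $d_1=\sum_{i\in I}d^i$, $d_2=\sum_{j\in J}d^j$, and since $x+d_1+d_2=y$ the target (\ref{ddmcparallelotheo1}) reads $g(\emptyset)+g(\{1,\dots,m\})\ge g(I)+g(J)$, i.e.\ the special case $g(I)+g(J)\le g(I\cup J)+g(I\cap J)$ of supermodularity of $g$. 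First I would dispose of the trivial case: if $y\notin\dom f$ the left-hand side of (\ref{ddmcparallelotheo1}) equals $+\infty$, so I may assume $y\in\dom f$.

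Next I would ensure that every value is finite. Since $\dom f$ is a DDM-convex set and $x,y\in\dom f$, Theorem~\ref{thm:parallelogramset} applied to the set $\dom f$ shows $z_S\in\dom f$ for every $S$, so $g$ is a genuine real-valued set function.

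The heart of the argument is to prove that $g$ is supermodular, for which it suffices to check the local exchange inequality $g(S)+g(S\cup\{k_1,k_2\})\ge g(S\cup\{k_1\})+g(S\cup\{k_2\})$ for every $S$ and all distinct $k_1,k_2\notin S$. Writing $z=z_S$ and $w=d^{k_1}+d^{k_2}$, the pair $\{d^{k_1},d^{k_2}\}$ is a two-element sub-multiset of $\{d^k\}$ and therefore still satisfies (\ref{icondition}) (monotone sequences restrict to monotone subsequences); hence the $m=2$ case of Proposition~\ref{prop:DecompoA} gives $\|w\|_\infty=2$ together with $\{\dmid(w,\veczero),\dmid(\veczero,w)\}=\{d^{k_1},d^{k_2}\}$. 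Translating by $z$ via Proposition~\ref{prop:basic}~(4) yields $\{\dmid(z+w,z),\dmid(z,z+w)\}=\{z+d^{k_1},z+d^{k_2}\}$, so the local inequality is exactly the instance of DDM-convexity (\ref{DDMC}) for the pair $(z,z+w)$ at $\ell_\infty$-distance $2$, which holds because $f\in{\rm DDMC}(2)$. This is where the hypotheses are used in full: {\rm DDMC}(2) supplies the scale-$2$ midpoint inequality, while the DDM-convexity of $\dom f$ (through Theorem~\ref{thm:parallelogramset}) guarantees that the four points $z,z+w,z+d^{k_1},z+d^{k_2}$ lie in $\dom f$, so the inequality is between finite values.

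Finally, the standard local-to-global criterion promotes these exchange inequalities to supermodularity of $g$, and specializing $g(I)+g(J)\le g(I\cup J)+g(I\cap J)$ to the partition $(I,J)$ gives $f(x+d_1)+f(x+d_2)=g(I)+g(J)\le g(\{1,\dots,m\})+g(\emptyset)=f(y)+f(x)$, which is (\ref{ddmcparallelotheo1}). The main obstacle is the verification in the previous paragraph, namely that the local exchange inequality coincides with an instance of {\rm DDMC}(2): one must be sure that any two (possibly equal) layers $d^{k_1},d^{k_2}$ sum to a vector of $\ell_\infty$-norm exactly $2$ whose two directed midpoints, measured from $z$, are precisely $z+d^{k_1}$ and $z+d^{k_2}$. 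The nesting property (\ref{icondition}), inherited by sub-multisets, is exactly what makes this true, and it is the reason why no induction on $m$ is required.
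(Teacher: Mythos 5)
Your proof is correct and is essentially the paper's argument: the same layer decomposition $d^k=\vecone_{A_k}-\vecone_{B_k}$, the same appeal to Theorem~\ref{thm:parallelogramset} for membership of all intermediate points in $\dom f$, and the same key observation that for $k_1<k_2$ the nesting of the $A_k,B_k$ forces $\|d^{k_1}+d^{k_2}\|_\infty=2$ with $\{\dmid(d^{k_1}{+}d^{k_2},\veczero),\dmid(\veczero,d^{k_1}{+}d^{k_2})\}=\{d^{k_1},d^{k_2}\}$, so that each local step is an instance of {\rm DDMC}(2). The only difference is presentational: the paper telescopes these local inequalities over the $|I|\times|J|$ grid directly, whereas you package them as the local exchange criterion for supermodularity of $g(S)=f(z_S)$ and then specialize to the partition $(I,J)$.
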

\begin{proof}
We note that $y = x+d_1+d_2$.
If $y\not\in \dom f$, by $f(y)=+\infty$, (\ref{ddmcparallelotheo1})
trivially holds.
In the sequel, we assume that $y \in \dom f$.
Let $I$ be denoted by $\{i_1,i_2\ldots,i_{|I|}\}$ 
$(i_1 < i_2 < \cdots < i_{|I|})$
and $J$ by $\{j_1,j_2\ldots,j_{|J|}\}$ $(j_1 < j_2 < \cdots < j_{|J|})$.
For every $k = 1,\ldots,|I|$ and $l = 1,\ldots,|J|$, we denote
$d_1^k = \vecone_{A_{i_k}}-\vecone_{B_{i_k}}$ and, similarly,
$d_2^l = \vecone_{A_{j_l}}-\vecone_{B_{j_l}}$.
For every $k=0,1, \dots, |I|$ and for every $l=0,1, \dots, |J|$, define
\[
  x(k, l)=x+\sum_{i=1}^{k}d_1^ i + \sum_{j=1}^l d_2^j.
 \]
By Theorem~\ref{thm:parallelogramset}, for every $k, l$, we have 
$x(k, l) \in \dom f$.
We note that (\ref{ddmcparallelotheo1}) is equivalent to
\begin{equation}\label{DDMC-parallelo-proof2}
f(x(0, 0)) + f(x(|I|, |J|)) \geq f(x(|I|, 0)) + f(x(0, |J|)).
\end{equation}

 Fix $k \in\{1,\ldots,|I|\}$ and $l \in \{1,\ldots, |J|\}$.
According to whether $i_k > j_l$ or $i_k < j_l$, either
\[
  A_{i_k} \subseteq A_{j_l},\quad B_{i_k} \subseteq B_{j_l},\quad
   A_{j_l} \cap B_{j_l} = \emptyset,\quad A_{i_k} \cup B_{i_k} \neq \emptyset
\]
or
\[
  A_{j_l} \subseteq A_{i_k},\quad B_{j_l} \subseteq B_{i_k},\quad
  A_{i_k} \cap B_{i_k} = \emptyset,\quad A_{j_l} \cup A_{j_l} \neq \emptyset.
\]
Thus, in the case where $i_k > j_l$, we have
\[
(d_1^k+d_2^l)_p=
\begin{cases}
    1 & (p\in A_{j_l}\setminus A_{i_k}), \\
    2 & (p\in A_{i_k}), \\
    -2 & (p\in B_{i_k}), \\
    -1 & (p\in B_{j_l}\setminus B_{i_k}), \\
    0 & (p\not\in (A_{j_l}\cup B_{j_l})),
\end{cases}
\]
which implies $\|d_1^k+d_2^l\|_\infty = 2$, 
$\dmid(d_1^k+d_2^l, \veczero)=\vecone_{A_{j_l}}-\vecone_{B_{j_l}}=d^l_2$ and
$\dmid(\veczero, d_1^k+d_2^l)=\vecone_{A_{i_k}}-\vecone_{B_{i_k}}=d^k_1$.
Similarly, in the case where $i_k < j_l$, we have $\|d_1^k+d_2^l\|_\infty = 2$, 
$\dmid(d_1^k+d_2^l, \veczero)=d^k_1$, $\dmid(\veczero, d_1^k+d_2^l)=d^l_2$.
In both cases, since 
$\|x(k, l)-x(k-1, l-1)\|_\infty=\|d_1^k+d_2^l\|_\infty=2$ and $f \in $ DDMC($2$),
we obtain
\begin{align}\label{DDMC-parallelo-proof1}
\begin{split}
&f(x(k, l))+f(x(k-1, l-1)) \\
&\geq f(\dmid(x(k, l), x(k-1, l-1)))+f(\dmid(x(k-1, l-1), x(k, l))).
\end{split}
\end{align}
On the other hand, the facts
\begin{align*}
 \dmid(x(k, l), x(k-1, l-1)) &=x(k-1, l-1)+\dmid(d_1^k+d_2^l, \veczero), \\
 \dmid(x(k-1, l-1), x(k, l)) &=x(k-1, l-1)+\dmid(\veczero, d_1^k+d_2^l),
\end{align*}
together with 
$\{\dmid(d_1^k+d_2^l, \veczero),\dmid(\veczero, d_1^k+d_2^l)\} =
\{d_1^k, d_2^l\}$ in the both cases where $i_k > j_l$ and $i_k < j_l$, 
yield the right-hand side of 
(\ref{DDMC-parallelo-proof1}) is equal to $f(x(k, l-1))+f(x(k-1, l))$.
Therefore, we obtain
\[
 f(x(k, l)))+f(x(k-1, l-1)) \geq f(x(k, l-1))+f(x(k-1, l)).
\]

By adding the above inequalities for $(k,l)$ with $1\leq k\leq |I|$ and $1\leq l\leq |J|$, 
we obtain (\ref{DDMC-parallelo-proof2}).
We emphasize that all the terms that are canceled in this addition of inequalities are
finite valued because $x(k,l) \in \dom f$ for all $(k,l)$ with 
$0\leq k\leq |I|$ and $0\leq l\leq |J|$.
\end{proof}

The next theorem is an immediate consequence of Theorem~\ref{thm:ddmcparalleloineq},
because a DDM-convex function $f:\ZZ^n \to \RR \cup \{+\infty\}$ 
belongs to DDMC($2$) and $\dom f$ is DDM-convex.

\begin{theorem}\label{thm:ddmcparalleloineq2}
Let $f:\ZZ^n \to \RR \cup \{+\infty\}$ be a DDM-convex function.
For every $x, y\in \dom f$, let $\{A_k \mid k=1,\ldots,m\}$ and 
$\{B_k \mid k=1,\ldots,m\}$ be the families defined by 
$(\ref{ddmcparalleloset})$, where $m = \|y-x\|_\infty$.
For any subset $J \subseteq \{1,\ldots,m\}$, let
$d = \sum_{k\in J}(\vecone_{A_k}-\vecone_{B_k})$.
Then we have
\begin{equation}\label{parallelo-equ}
f(x)+f(y)\ge f(x+d)+f(y-d).
\end{equation}
 \end{theorem}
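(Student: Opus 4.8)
The plan is to derive this statement directly from Theorem~\ref{thm:ddmcparalleloineq}, exactly as the remark preceding the theorem indicates. First I would check that the hypotheses of Theorem~\ref{thm:ddmcparalleloineq} are met. A DDM-convex $f$ satisfies (\ref{DDMC}) for \emph{all} pairs $x,y \in \ZZ^n$, in particular for those with $\|x-y\|_\infty = 2$, so $f$ belongs to DDMC($2$); and by Proposition~\ref{prop:obviousprop}~(3) its effective domain $\dom f$ is a DDM-convex set. Since $x,y \in \dom f$, we may take $m = \|y-x\|_\infty$ and form the sets $A_k, B_k$ of (\ref{ddmcparalleloset}).

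Next I would instantiate the partition of $\{1,\dots,m\}$ in Theorem~\ref{thm:ddmcparalleloineq} as $(J, \bar{J})$ with $\bar{J} = \{1,\dots,m\}\setminus J$, and set
\[
 d_1 = \sum_{k\in J}(\vecone_{A_k}-\vecone_{B_k}) = d, \qquad
 d_2 = \sum_{k\in \bar{J}}(\vecone_{A_k}-\vecone_{B_k}).
\]
The key identity is that the full decomposition recovers the displacement vector, namely $d_1 + d_2 = \sum_{k=1}^m (\vecone_{A_k}-\vecone_{B_k}) = y-x$, which is immediate from the construction (\ref{ddmcparalleloset}). Consequently $x + d_1 + d_2 = y$, $x + d_1 = x + d$, and $x + d_2 = x + (y-x) - d = y - d$.

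Finally, substituting these equalities into the conclusion $f(x) + f(x+d_1+d_2) \ge f(x+d_1) + f(x+d_2)$ of Theorem~\ref{thm:ddmcparalleloineq} yields exactly (\ref{parallelo-equ}). I do not anticipate any genuine obstacle: all the substantive work has already been carried out in the proof of Theorem~\ref{thm:ddmcparalleloineq} (the telescoping of one-step DDMC($2$) inequalities across the grid of points $x(k,l)$, together with the finiteness bookkeeping guaranteed by $x(k,l)\in\dom f$). The present theorem is merely its specialization to the partition induced by $J$, combined with the observation that DDM-convexity of $f$ simultaneously supplies membership in DDMC($2$) and DDM-convexity of $\dom f$.
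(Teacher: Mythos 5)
Your proposal is correct and follows exactly the paper's own route: the paper dispenses with Theorem~\ref{thm:ddmcparalleloineq2} in one sentence, noting it is an immediate consequence of Theorem~\ref{thm:ddmcparalleloineq} because a DDM-convex $f$ lies in DDMC($2$) and has DDM-convex effective domain. Your instantiation with the partition $(J,\{1,\dots,m\}\setminus J)$ and the identity $d_1+d_2=y-x$ is precisely the intended specialization.
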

 We call the inequality (\ref{parallelo-equ}) {\em parallelogram inequality}
 of DDM-convex functions.

%%%%%%%%%%%%%%%%%%%%%%%%%%%%%%%%%%%%%%%%%%%%%%%%
\section{Characterizations}\label{chars}

In this section, we give several equivalent conditions of DDM-convexity
and a simple characterization of quadratic DDM-convex functions.

For every pair $(x,y) \in \ZZ^n \times \ZZ^n$, we recall that
the families $\{A_k \mid k=1,\ldots,m\}$ and $\{B_k \mid k=1,\ldots,m\}$
are defined by
\[
A_k=\{i \mid y_i-x_i\ge k\}, \quad B_k=\{i \mid y_i-x_i\le -k\} 
\qquad(k=1, \dots,  m),
\]
in (\ref{ddmcparalleloset}), where $m = \|y-x\|_\infty$.

\begin{theorem}\label{thm:char}
For a function $f:\ZZ^n\to\RR\cup\{+\infty\}$, the following properties are equivalent
to each other.
\begin{enumerate}
\renewcommand{\labelenumi}{\rm (\arabic{enumi})}
\item 
$f$ is DDM-convex, i.e., $f\in$ {\rm DDMC($\geq$$1$)}.
\item 
$\dom f$ is DDM-convex and $f\in$ {\rm DDMC($2$)}.
\item 
$f$ satisfies parallelogram inequalities 
for every $x, y\in \dom f$ and for any subset 
$J \subseteq \{1,\ldots,m\}$ where $m = \|y-x\|_\infty$.
\item 
For every $x, y\in \dom f$, we have
\begin{equation}\label{DAPR}
f(x)+f(y)\ge f(x+\vecone_{A_m}-\vecone_{B_m})+f(y-\vecone_{A_m}+\vecone_{B_m}),
\end{equation}
where $m = \|y-x\|_\infty$, and the sets $A_m$ and $B_m$ are
defined by $(\ref{ddmcparalleloset})$.
\item
For every $x, y\in \dom f$ and for any $\alpha \in \{1, \dots, m\}$,
by defining $d=\sum_{k=1}^\alpha (\vecone_{A_k}-\vecone_{B_k})$,
we have
\[
  f(x)+f(y)\ge f(x+d)+f(y-d),
\]
where $m = \|y-x\|_\infty$, and the families $\{A_k \mid k=1,\ldots,m\}$ 
and $\{B_k \mid k=1,\ldots,m\}$ are defined by 
$(\ref{ddmcparalleloset})$.
\end{enumerate}
\end{theorem}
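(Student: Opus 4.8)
$$(1) \Rightarrow (3) \Rightarrow (5) \Rightarrow (4) \Rightarrow (2) \Rightarrow (1),$$
so that all five conditions become mutually equivalent. Several of these arrows are already available or nearly immediate from the results developed earlier in the paper.

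First I would dispatch the easy links. The implication $(1)\Rightarrow(3)$ is exactly Theorem~\ref{thm:ddmcparalleloineq2}: a DDM-convex function satisfies the parallelogram inequality for every $x,y\in\dom f$ and every $J\subseteq\{1,\dots,m\}$. Next, $(3)\Rightarrow(5)$ is trivial, since condition (5) is merely the special case of (3) obtained by taking $J=\{1,2,\dots,\alpha\}$ (the nested structure $A_1\supseteq\cdots\supseteq A_m$ and $B_1\supseteq\cdots\supseteq B_m$ means this initial segment is a legitimate subset). Likewise $(5)\Rightarrow(4)$ follows by choosing $\alpha=m$, because then $d=\sum_{k=1}^m(\vecone_{A_k}-\vecone_{B_k})=y-x$ restricted appropriately—but here one must be careful: (4) uses only $\vecone_{A_m}-\vecone_{B_m}$, the \emph{single} top layer, not the full sum. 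So in fact the clean route is to take $\alpha=1$ in (5)? No—again the indices differ. The correct observation is that $A_m$ and $B_m$ pick out exactly the coordinates achieving the maximal gap $m$, and one should check directly that condition (4) is the $\alpha=1$ instance of (5) \emph{after} relabeling, or more safely derive (4) as a direct consequence of DDM-convexity via Proposition~\ref{prop:dmid}. I would verify that $x+\vecone_{A_m}-\vecone_{B_m}=\dmid(y,x)$ and $y-\vecone_{A_m}+\vecone_{B_m}=\dmid(x,y)$ when $\|y-x\|_\infty=m$, using the characterization (a)--(c) of Proposition~\ref{prop:dmid}; this makes (4) literally the defining inequality (\ref{DDMC}) for the pair $(x,y)$, giving both $(1)\Rightarrow(4)$ and clarifying its role.

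The substantive implications are $(4)\Rightarrow(2)$ and $(2)\Rightarrow(1)$. For $(2)\Rightarrow(1)$, I would argue by induction on $m=\|x-y\|_\infty$ in the spirit of the proof of Proposition~\ref{prop:LtoDDMC}. The case $m\le 1$ holds by Proposition~\ref{prop:basic}~(3), and the case $m=2$ is the hypothesis $f\in\mathrm{DDMC}(2)$. For $m\ge 3$, set $p=\dmid(x,y)$ and $q=\dmid(y,x)$; since $\dom f$ is DDM-convex, $p,q\in\dom f$, and Proposition~\ref{prop:basic}~(4) together with the decomposition logic from (\ref{paralleloset1}) shows $\|p-x\|_\infty,\|q-y\|_\infty<m$, so one wants to telescope DDM-convexity inequalities along an intermediate path from $(x,y)$ down to $(p,q)$. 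The cleanest vehicle is Theorem~\ref{thm:ddmcparalleloineq}, whose hypotheses are precisely ``$f\in\mathrm{DDMC}(2)$ and $\dom f$ DDM-convex''—so condition (2) hands me the parallelogram inequality (\ref{ddmcparallelotheo1}) directly, and taking the partition $(I,J)$ with $I=\{k\le m\}$ giving $d_1+d_2=y-x$ recovers $f(x)+f(y)\ge f(\dmid(y,x))+f(\dmid(x,y))$, i.e.\ (\ref{DDMC}).

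The remaining link $(4)\Rightarrow(2)$ I expect to be the main obstacle, since (4) is a single, weak-looking top-layer inequality and I must bootstrap it up to full $\mathrm{DDMC}(2)$ together with DDM-convexity of $\dom f$. The strategy is again induction on $m$: applying (4) to $(x,y)$ yields a pair $(x',y')=(\dmid(y,x),\dmid(x,y))$ with strictly smaller $\ell_\infty$-distance and with $f(x)+f(y)\ge f(x')+f(y')$; iterating drives the distance down to $\le 1$, where Proposition~\ref{prop:basic}~(3) closes the loop, exactly as in the repeated-application argument of Proposition~\ref{prop:LtoDDMC}. This simultaneously shows $\dom f$ is DDM-convex (each intermediate point inherits finiteness) and that the full inequality (\ref{DDMC}) holds, from which $f\in\mathrm{DDMC}(2)$ follows a fortiori. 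The delicate point is ensuring the iterate $(x',y')$ still realizes its own $A_m,B_m$ sets correctly so that (4) reapplies; I would check this using the monotone structure of the gap vector $y-x$ under the $\dmid$ operation, invoking Proposition~\ref{prop:basic}~(4) and Proposition~\ref{prop:dmid}~(c).
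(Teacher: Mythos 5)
Your overall architecture is sound and two of the substantive links are essentially the paper's own: $(1)\Rightarrow(3)$ is indeed Theorem~\ref{thm:ddmcparalleloineq2}, and your $(4)\Rightarrow(2)$ is really the paper's $(4)\Rightarrow(1)$ argument (iterate (\ref{DAPR}); each step preserves conditions (a), (c) of Proposition~\ref{prop:dmid} and strictly decreases $\|p-q\|_\infty$, so the terminal pair is $(\dmid(x,y),\dmid(y,x))$, and finiteness propagates so the iterates stay in $\dom f$). But two links are genuinely broken as written. First, $(5)\Rightarrow(4)$: you try $\alpha=m$ and $\alpha=1$, correctly reject both, and then fall back on the claim that $x+\vecone_{A_m}-\vecone_{B_m}=\dmid(y,x)$. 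That identity is false in general: take $x=(0,0)$, $y=(4,1)$, so $m=4$, $A_4=\{1\}$, $B_4=\emptyset$; then $x+\vecone_{A_4}=(1,0)$ while $\dmid(x,y)=(2,0)$ and $\dmid(y,x)=(2,1)$. The single top layer $\vecone_{A_m}-\vecone_{B_m}$ moves each extremal coordinate by $1$, whereas $\dmid$ moves it by roughly $m/2$, so (4) is \emph{not} the defining inequality (\ref{DDMC}). The correct move, which the paper uses, is $\alpha=m-1$: since $\sum_{k=1}^{m-1}(\vecone_{A_k}-\vecone_{B_k})=(y-x)-(\vecone_{A_m}-\vecone_{B_m})$, the inequality of (5) with $d=\sum_{k=1}^{m-1}(\vecone_{A_k}-\vecone_{B_k})$ has $x+d=y-\vecone_{A_m}+\vecone_{B_m}$ and $y-d=x+\vecone_{A_m}-\vecone_{B_m}$, which is exactly (\ref{DAPR}). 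Without this, condition (5) is never tied into your cycle.

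Second, in $(2)\Rightarrow(1)$ you invoke Theorem~\ref{thm:ddmcparalleloineq} (whose hypotheses are exactly (2) --- good) but propose the partition $I=\{1,\dots,m\}$, $J=\emptyset$, for which $d_1=y-x$, $d_2=\veczero$ and (\ref{ddmcparallelotheo1}) reads $f(x)+f(y)\geq f(y)+f(x)$, a tautology. To recover (\ref{DDMC}) you need $x+d_1=\dmid(y,x)$ and $x+d_2=\dmid(x,y)$, i.e.\ the odd/even partition $I=\{k\le m: k \mbox{ odd}\}$, $J=\{k \le m: k \mbox{ even}\}$, as shown in the Claim inside the proof of Proposition~\ref{prop:DecompoA}. (The paper sidesteps this by routing $(2)\Rightarrow(3)\Rightarrow(4)\Rightarrow(1)$, so it only ever needs the trivial instantiations $J=\{1,\dots,m-1\}$ and the iteration of (\ref{DAPR}).) Both of your gaps are repairable, but as stated the chain does not close.
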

\begin{proof}
The implication $(1) \Rightarrow (2)$ is obvious and 
the implications from (1) to (3), (4) and (5) follow from 
Theorem~\ref{thm:ddmcparalleloineq2}.
To prove opposite implications, we show $f \in$ DDMC($m$) for every $m\geq 1$
from (4), and show [$(5) \Rightarrow (4)$], [$(3) \Rightarrow (4)$] and
[$(2) \Rightarrow (3)$].

(4)$\Rightarrow$(1):
Since $f \in$ DDMC($1$) always holds from Proposition~\ref{prop:basic}, we assume
$m\geq 2$ and fix $x,y \in \dom f$ with $\|x-y\|_\infty=m$, arbitrarily.
Let denote $p=x+\vecone_{A_m}-\vecone_{B_m}$ and 
$q=y-\vecone_{A_m}+\vecone_{B_m}$ in (\ref{DAPR}).
Vectors $p$ and $q$ satisfy conditions (a) and (c) of Proposition~\ref{prop:dmid},
$\|p-q\|_\infty<m$ and $p,q \in \dom f$.
By repeating (\ref{DAPR}) for $(p,q)$ until $\|p-q\|_\infty \leq 1$,
the final $p$ and $q$ satisfy all conditions of  Proposition~\ref{prop:dmid}, and hence,
$p=\dmid(x, y)$ and $q=\dmid(y, x)$ are satisfied.
This shows $f \in$ DDMC($m$) holds.

(5)$\Rightarrow$(4):
 By setting $\alpha = m-1$ in (5), we obtain 
\[
 x+d=y-\vecone_{A_m}+\vecone_{B_m}, \quad y-d=x+\vecone_{A_m}-\vecone_{B_m},
\]
and hence, (4).

(3)$\Rightarrow$(4):
By setting $J=\{1, \dots, m-1\}$ in (3), we obtain
\[
 x+d=y-\vecone_{A_m}+\vecone_{B_m}, \quad y-d=x+\vecone_{A_m}-\vecone_{B_m},
\]
and hence, (4).

(2)$\Rightarrow$(3): Property (2) and Theorem~\ref{thm:ddmcparalleloineq}
imply (3).
\end{proof}

\begin{remark}
Theorem~\ref{thm:char} may be regarded as a generalization of 
Theorem~\ref{thm:L-char}.

Property (4) of Theorem~\ref{thm:char} corresponds to 
(4) of Theorem~\ref{thm:L-char}.
For an L$^\natural$-convex function $f$, the following inequalities 
\[
  f(x) + f(y) \geq f(x + \vecone_{A_m}) + f(y - \vecone_{A_m})
\]
and
\[
  f(x) + f(y) \geq f(x - \vecone_{B_m}) + f(y + \vecone_{B_m})
\]
hold.
These inequalities imply (\ref{DAPR}).
However, for a DDM-convex function $f$, the two parts on $A_m$ and 
$B_m$ must be combined to a single inequality (\ref{DAPR}).
This demonstrates the difference between L$^\natural$-convexity and
DDM-convexity.

Property (5) of Theorem~\ref{thm:char} corresponds to 
(1) of Theorem~\ref{thm:L-char}.
In (5) of Theorem~\ref{thm:char}, two vectors 
$x+d=x+\sum_{k=1}^\alpha (\vecone_{A_k}-\vecone_{B_k})$ and
$y-d=y-\sum_{k=1}^\alpha (\vecone_{A_k}-\vecone_{B_k})$ are 
expressed by
\[
(x+d)_i=
\begin{cases}
x_i-\alpha & (x_i-y_i\ge \alpha), \\
x_i+\alpha & (y_i-x_i\ge \alpha), \\
y_i & (|x_i-y_i|<\alpha),
\end{cases}
 \quad
(y-d)_i=
\begin{cases}
y_i+\alpha & (x_i-y_i\ge \alpha), \\
y_i-\alpha & (y_i-x_i\ge \alpha), \\
x_i & (|x_i-y_i|< \alpha).
\end{cases}
\]
On the other hand, in (\ref{trans-submo}),
two vectors $(x-\alpha\vecone)\vee y$ and $x\wedge(y+\alpha\vecone)$
can be rewritten as
\begin{align*}
\left((x-\alpha\vecone)\vee y\right)_i &=
\begin{cases}
x_i-\alpha & (x_i-y_i\ge \alpha), \\
y_i & (x_i-y_i<\alpha),
\end{cases} \\
\left(x\wedge (y+\alpha\vecone) \right)_i &=
\begin{cases}
y_i+\alpha & (x_i-y_i\ge \alpha), \\
x_i & (x_i-y_i<\alpha).
\end{cases}
\end{align*}
In the same way as the relation between (4) of Theorem~\ref{thm:char} and 
(4) of Theorem~\ref{thm:L-char}, two separate operations for 
L$^\natural$-convex functions must be executed simultaneously for
DDM-convex functions.
\hfill\finbox
\end{remark}

For a quadratic function $f(x) = x^\top Q x\;(x\in \ZZ^n)$ with a symmetric matrix
$Q = [q_{ij}]$, we show that $f$ is DDM-convex if and only if $Q$ is 
{\em diagonally dominant with nonnegative diagonals}:
\begin{equation}\label{diagdom}
 q_{ii} \geq \sum_{j:j \neq i}|q_{ij}| \qquad (\forall i =1,\ldots,n).
\end{equation}
For each $p\in\RR$, let $p^+=\max\{p, 0\}$ and $p^-=\max\{-p, 0\}$.
Note that $|p|=p^{+} + p^{-}$.
Quadratic function $f(x)=x^\top Q x$ can be written as\footnote{
Private communication with F. Tardella (2017).}
\begin{align}
f(x) &=\sum_{i, j=1}^n q_{ij}x_ix_j \notag \\
 &=\sum_{i=1}^n\left(q_{ii}{-}\sum_{j:j\not=i}|q_{ij}|\right)x_i^2
   +\frac{1}{2}\sum_{i,j:j\not=i}(q_{ij}^+(x_i{+}x_j)^2+q_{ij}^-(x_i{-}x_j)^2).
      \label{quadratic2sep}
\end{align}
In (\ref{quadratic2sep}), the condition (\ref{diagdom}) of $Q$ implies
the nonnegativity of coefficients of $x_i^2$.
Thus, if $Q$ is diagonally dominant with nonnegative diagonals, 
then $f$ is 2-separable convex, and hence,
DDM-convex by Theorem~\ref{thm:2sep2ddmc}.
By proving the opposite implication, we obtain the following property.

\begin{theorem}\label{thm:diagdom-DDMC}
For a quadratic function $f(x) = x^\top Q x\; (x \in \ZZ^n)$ 
with a symmetric matrix $Q = [q_{ij}]$, $f$ is DDM-convex if and only if
$Q$ is diagonally dominant with nonnegative diagonals.
\end{theorem}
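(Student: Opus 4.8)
The plan is to prove the remaining direction: if $f(x)=x^\top Q x$ is DDM-convex, then $Q$ is diagonally dominant with nonnegative diagonals. The forward direction is already established via the decomposition (\ref{quadratic2sep}) together with Theorem~\ref{thm:2sep2ddmc}, so the entire task is the converse. I would work contrapositively in spirit but more directly by extracting the inequality (\ref{diagdom}) from carefully chosen instances of the DDM-convexity inequality (\ref{DDMC}).

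First I would fix an index $i$ and aim to show $q_{ii}\ge\sum_{j\neq i}|q_{ij}|$. The natural strategy is to test (\ref{DDMC}) on points $x,y$ whose difference is concentrated along a sign pattern that separates the positive and negative off-diagonal contributions. Specifically, for a chosen index $i$ I would pick a sign vector that places $+1$ on coordinate $i$ and, on each other coordinate $j$, a sign chosen to align $q_{ij}$ so that $q_{ij}(\text{sign}_i)(\text{sign}_j)=|q_{ij}|$; then I would take $x$ and $y$ to differ by $2$ in the $i$-direction and by a controlled amount in the others. Because $\dmid(x,y)$ and $\dmid(y,x)$ round toward $x$ and $y$ respectively, the point is that on a pair with $\|x-y\|_\infty=2$ the two midpoints split the difference into the two vectors $x+\vecone_{A_2}-\vecone_{B_2}$ and $y-\vecone_{A_2}+\vecone_{B_2}$ (cf. property (4) of Theorem~\ref{thm:char}), and evaluating the quadratic form on these four points produces exactly a second-difference expression in the coefficients. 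The algebra should collapse the inequality $f(x)+f(y)\ge f(\dmid(x,y))+f(\dmid(y,x))$ into a statement of the form $q_{ii}-\sum_{j\neq i}|q_{ij}|\ge 0$ after the $x_i^2$, cross, and $x_j^2$ terms are accounted for, using that each squared term in (\ref{quadratic2sep}) contributes a nonnegative discrete second difference while the negative part can only come from the diagonal coefficient.

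The cleanest route, which I would actually pursue, is to reduce to $n=2$ by the closure properties already in hand. By Proposition~\ref{prop:basic-operations}~(2)--(3) DDM-convexity is preserved under coordinate permutation and individual sign inversion, and restriction to a $2$-dimensional coordinate subspace (setting the other variables to a constant) preserves DDM-convexity as well since it is an instance of the defining inequality on vectors supported on two coordinates. Thus for fixed $i$ and a single $j\neq i$ I can first show a two-variable bound, and then I would need to combine the bounds over all $j$ into the single summed inequality (\ref{diagdom}). The summation step is where the real content lies: a collection of pairwise inequalities $q_{ii}\ge|q_{ij}|$ is far weaker than $q_{ii}\ge\sum_{j\neq i}|q_{ij}|$, so I cannot simply add two-variable results. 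Instead I must test (\ref{DDMC}) on a genuinely full-dimensional pair that activates all off-diagonal coefficients $q_{ij}$ at once, with signs chosen adversarially to make every $q_{ij}$ appear with its absolute value.

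Concretely, I would choose $\tau\in\{+1,-1\}^n$ with $\tau_i=+1$ and $\tau_j=\mathrm{sign}(q_{ij})$ for $j\neq i$ (breaking ties arbitrarily, treating $q_{ij}=0$ harmlessly), replace $f$ by the DDM-convex function $f(\tau\odot x)$ so that all relevant off-diagonal entries become nonpositive in the transformed matrix $\tilde Q$ with $\tilde q_{ij}=\tau_i\tau_j q_{ij}=-|q_{ij}|$ for the entries touching row $i$; then I would set $x=\veczero$ and $y=e_i+\sum_{j\neq i}e_j$, so that $y-x=\vecone$ has $A_1=\{1,\dots,n\}$, $m=1$. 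Since $m=1$ gives nothing, I would instead take $y_i=2$, $y_j=1$ for $j\neq i$, making $\|y-x\|_\infty=2$ with $A_2=\{i\}$, whence $\dmid$ splits off coordinate $i$ alone; expanding $f(x)+f(y)-f(\dmid(x,y))-f(\dmid(y,x))$ for the quadratic form then leaves precisely $2\bigl(q_{ii}-\sum_{j\neq i}|q_{ij}|\bigr)\ge 0$ in the transformed coordinates, which is the desired inequality since $\tilde q_{ii}=q_{ii}$. The main obstacle is verifying that this single test point yields exactly the summed quantity with the correct sign on every cross term rather than a weaker or malformed combination; I expect this to hinge on the fact that in the transformed coordinates every off-diagonal entry of row $i$ is nonpositive, so the $\dmid$ operation rounds consistently and the cross-term second differences all carry the same sign. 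I would present this verification as the one genuine computation of the proof.
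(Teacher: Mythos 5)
Your proposal is correct and lands on essentially the paper's own argument: the paper tests DDM-convexity on the single pair $(z^i,\veczero)$ where $z^i_i=2$ and $z^i_j=\mp1$ according to the sign of $q_{ij}$, which is exactly your test pair $(\veczero,\,2\vecone_i+\sum_{j\neq i}\vecone_j)$ pulled back through the sign inversion $\tau$, and both computations reduce the inequality to $2\bigl(q_{ii}-\sum_{j\neq i}|q_{ij}|\bigr)\ge 0$ in one step (your preliminary detour through an $n=2$ reduction is rightly abandoned, since pairwise bounds do not sum). One sign slip to fix: for the transformed off-diagonals of row $i$ to become $-|q_{ij}|$ you must take $\tau_j=-\mathrm{sign}(q_{ij})$, not $+\mathrm{sign}(q_{ij})$; with the choice as written the off-diagonals become $+|q_{ij}|$ and the test yields only the vacuous $q_{ii}+\sum_{j\neq i}|q_{ij}|\ge 0$.
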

\begin{proof}
It is enough to show that if $f$ is DDM-convex, then $Q$ is 
diagonally dominant with nonnegative diagonals.
For each $ i\in\{1, \dots, n\}$, define $z^i\in\ZZ^n$ by
\[
z^i_j=
\begin{cases}
    2 & \  (j=i), \\
    -1  & \ (j\not=i \mbox{ and } q_{ij}\ge0), \\
    1 & \  (j\not=i \mbox{ and } q_{ij}<0).
\end{cases}
\]
By DDM-convexity of $f$, the inequality
\[
f(z^i)+f(\veczero)\ge f(\dmid(z^i, \veczero))+f(\dmid(\veczero, z^i))
\]
must hold.
Since $\dmid(z^i, \veczero)=z^i-\vecone_i$ and
$\dmid(\veczero, z^i)=\vecone_i$, we have
\[
  (z^i)^\top Q z^i \geq (z^i-\vecone_i)^\top Q(z^i-\vecone_i)+\vecone_i^\top Q\vecone_i,
\]
which implies
\begin{align*}
0 &\leq (z^i)^\top Q \vecone_i -\vecone_i^\top Q \vecone_i \\
 &=q_{ii}-\sum_{j:j\not=i\wedge q_{ji}\ge0} q_{ji}+\sum_{j:j\not=i\wedge q_{ji}<0} q_{ji} \\
 &=q_{ii}-\sum_{j:j\not=i} |q_{ji}|.
\end{align*}
By $Q^\top = Q$, we obtain the diagonal dominance 
with nonnegative diagonals of $Q$.
\end{proof}

The minimizers of DDM-convex functions are DDM-convex sets, while
the minimizers of L$^\natural$-convex functions are L$^\natural$-convex sets.
The class of L$^\natural$-convex functions has a characterization
in terms of minimizers.
For a function $f : \ZZ^n \to \RR \cup \{+\infty\}$ and $p \in \RR^n$, we
denote by $f-p$ the function given by
\[
 (f-p)(x) = f(x) - \sum_{i=1}^n p_ix_i \qquad (\forall x \in \ZZ^n).
\]

\begin{theorem}[\cite{DCA,MS00polyML}]
For a function $f : \ZZ^n \to \RR \cup \{+\infty\}$, $f$ is L$^\natural$-convex
if and only if $\argmin (f-p)$ is an L$^\natural$-convex set for every $p \in \RR^n$
with $\inf (f-p) >-\infty$.
\end{theorem}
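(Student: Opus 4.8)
The plan is to prove the two implications separately; the forward one is routine, and the converse carries the real content.

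\medskip
\noindent\textbf{Forward direction.}
First I would record that subtracting a linear function preserves L$^\natural$-convexity. Writing $\ell(z)=\sum_i p_i z_i$, a coordinatewise check gives $\max(x_i-\alpha,y_i)+\min(x_i,y_i+\alpha)=x_i+y_i$ for every $\alpha\ge 0$, so
\[
 \ell((x-\alpha\vecone)\vee y)+\ell(x\wedge(y+\alpha\vecone))=\ell(x)+\ell(y).
\]
Thus $\ell$ satisfies (\ref{trans-submo}) with equality, and since these linear terms cancel in (\ref{trans-submo}), $f-p$ is L$^\natural$-convex whenever $f$ is. Next, if $g=f-p$ is L$^\natural$-convex with $\inf g>-\infty$ and $x,y\in\argmin g$, then applying (\ref{trans-submo}) to $g$ yields $2\min g=g(x)+g(y)\ge g((x-\alpha\vecone)\vee y)+g(x\wedge(y+\alpha\vecone))\ge 2\min g$, forcing both new points into $\argmin g$. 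As this holds for all $\alpha\ge 0$, the set $\argmin g=\argmin(f-p)$ is translation-submodular, i.e.\ an L$^\natural$-convex set.

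\medskip
\noindent\textbf{Converse, strategy.}
Here I would verify discrete midpoint convexity (\ref{DMC}), which by Theorem~\ref{thm:L-char}(2) is equivalent to L$^\natural$-convexity. The conceptual engine is this: if for a pair $x,y\in\dom f$ one can produce $p\in\RR^n$ with $x,y\in\argmin(f-p)$ and $\inf(f-p)>-\infty$, then $\argmin(f-p)$ is an L$^\natural$-convex set by hypothesis, hence closed under $\lceil(x+y)/2\rceil,\lfloor(x+y)/2\rfloor$ (its indicator obeys (\ref{DMC})); since these two points sum to $x+y$, the linear terms cancel and $f(x)+f(y)=f(\lceil(x+y)/2\rceil)+f(\lfloor(x+y)/2\rfloor)$, giving (\ref{DMC}). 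The difficulty is that two arbitrary lattice points need not be simultaneously exposed as global minimizers by a common linear functional, so I would instead pass to the convex extension. An L$^\natural$-convex set is in particular an integrally convex set, so every $\argmin(f-p)$ is integrally convex; invoking the analogous minimizer characterization of integral convexity gives that $f$ itself is integrally convex, hence extends to a polyhedral convex function $\tilde f$ on $\RR^n$ with $\argmin(f-p)=\ZZ^n\cap\argmin(\tilde f-p)$ for every $p$.

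\medskip
\noindent\textbf{Converse, core and main obstacle.}
With $\tilde f$ in hand, for each $p$ with finite infimum the face $F_p=\argmin(\tilde f-p)$ is a rational polyhedron whose integer points form the L$^\natural$-convex set $\argmin(f-p)$, and by integrality of the convex closure $F_p$ is the convex hull of these integer points. The key transfer step is that a rational polyhedron equal to the convex hull of an L$^\natural$-convex set is an L$^\natural$-convex polyhedron; granting this, every minimizer face of $\tilde f$ is an L$^\natural$-convex polyhedron, which by the polyhedral L-convexity theory of \cite{MS00polyML} makes $\tilde f$ a polyhedral L$^\natural$-convex function, and its restriction $f$ is then L$^\natural$-convex. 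I expect the main obstacle to be exactly this bridge between the discrete statement (integer points of each face form an L$^\natural$-convex set) and the continuous one (each face is an L$^\natural$-convex polyhedron): one must show that the lattice operations $\vee,\wedge$ and the translation $\pm\alpha\vecone$ that characterize L$^\natural$-convexity on $\ZZ^n$ propagate to the real polyhedron, which rests on the face being spanned by and dense in its integer points. Establishing integral convexity up front and this discrete-to-polyhedral propagation are where the care is needed, and together they constitute the crux of the converse.
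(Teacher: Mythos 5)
The paper does not prove this statement; it is imported verbatim from \cite{DCA,MS00polyML}, so there is no in-paper proof to compare yours against, and your proposal has to stand on its own. Your forward direction does: linear functions satisfy (\ref{trans-submo}) with equality, so $f-p$ inherits L$^\natural$-convexity, and the sandwich argument correctly forces $(x-\alpha\vecone)\vee y$ and $x\wedge(y+\alpha\vecone)$ into $\argmin(f-p)$. (Minor point: if the finite infimum is not attained, $\argmin(f-p)$ is empty, and you should either adopt the convention that the empty set is L$^\natural$-convex or note that the statement implicitly concerns $p$ for which the minimum is attained.)

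The converse, however, has a genuine gap at the sentence ``invoking the analogous minimizer characterization of integral convexity gives that $f$ itself is integrally convex.'' There is no such theorem you can invoke: it does not appear in this paper or in the sources cited for the present statement, and you neither prove it nor reference a proof. It is not a harmless technicality — it is a claim of the same general shape and depth as the theorem you are proving (recovering a global convexity property of $f$ from properties of all the sets $\argmin(f-p)$), and the paper's own example in Section~\ref{chars} shows that the exactly analogous characterization \emph{fails} for DDM-convexity and for global/local discrete midpoint convexity, two classes sitting in the same band between L$^\natural$-convexity and integral convexity. So whether such a minimizer characterization holds for a given class is delicate and cannot be posited. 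A proof of your intermediate claim would itself require the machinery of the convex closure $\hat f$, supporting hyperplanes at a point where $\hat f < \tilde f$, and attainment arguments — i.e., most of the analytic work of the converse is hidden inside this one unproved step. Relatedly, your step (e) quietly assumes the polyhedral face-based characterization of L$^\natural$-convex functions, which is essentially the continuous version of the very statement at issue; leaning on it without a precise citation makes the argument circular in spirit. The transfer lemma you single out as ``the crux'' (that the convex hull of an L$^\natural$-convex set is an L$^\natural$-convex polyhedron) is in fact the comparatively easy, known ingredient. As written, the converse is a plausible blueprint, not a proof.
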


Unfortunately, the class of DDM-convex functions does not have a similar characterization.

\begin{example}
Let us consider the function $f : \ZZ^3 \to \RR \cup \{+\infty\}$ given by
\begin{align*}
 & f(0,0,0) = 0,\quad f(1,0,1) = f(1,1,0) = 1, \\
 & f(1,0,0) = f(1,1,1) = 2,\quad f(2,1,1) = 3, \\
 & f(x) = +\infty \quad(\mbox{otherwise}).
\end{align*}
The function $f$ is not DDM-convex because
\[
 f(0,0,0) + f(2,1,1) = 3 < 4 = f(1,0,0) + f(1,1,1),
\]
while $\dom f$ is a DDM-convex set (in fact, an L$^\natural$-convex set).
Futhermore, $\argmin (f-p)$ is a DDM-convex set for every $p \in \RR^3$ as follows.
There exists no $p \in \RR^3$ such that $\{(0,0,0),(2,1,1)\} \subseteq \argmin(f-p)$,
because we have $0 \leq 1-p_1-p_2$ from $(f-p)(0,0,0) \leq (f-p)(1,1,0)$, and
$2-p_1-p_2 \leq 0$ from $(f-p)(2,1,1) \leq (f-p)(1,0,1)$.
For any $p \in \RR^3$ and for any $x,y \in \argmin(f-p)$, this fact implies
that $\|x-y\|_\infty \leq 1$ must hold, and hence, $\argmin(f-p)$ is
a DDM-convex set. 
We note that this example also shows that a similar characterization does not hold
for the classes of globally/locally discrete midpoint convex functions.
\hfill \finbox
\end{example}

%%%%%%%%%%%%%%%%%%%%%%%%%%%%%%%%%%%%%%%%%%%%%%%%
\section{Operations}\label{operations}

We discuss several operations for discrete convex functions, including 
scaling operations~\cite{MMTT2019,MMTT-DMC,DCA},  
restriction, projection, direct sum and 
convolution operations~\cite{projectionandconvolution,DCA,DCAOperation}.

\subsection{Scaling operations}

Scaling operations are useful techniques for designing efficient algorithms in 
discrete optimization.
It is shown in \cite{MMTT-DMC} that 
global/local discrete midpoint convexity,
including L$^\natural$-convexity, is closed under scaling operations.
We show that DDM-convexity is also closed under scaling operations.

Given a function $f:\ZZ^n\to\RR\cup\{+\infty\}$ and 
a positive integer $\alpha$, the {\em $\alpha$-scaling} of $f$
is the function $f^\alpha$ defined by
\[
  f^\alpha(x)=f(\alpha x)\qquad(x\in\ZZ^n).
\]
We also define the $\alpha$-scaling $S^\alpha$ of a set $S \subseteq \ZZ^n$ by
\[
  S^\alpha=\{x\in\ZZ^n \mid \alpha x\in S\}.
\]

\begin{theorem}\label{thm:scaling}
Given a DDM-convex function $f:\ZZ^n \to \RR \cup \{+\infty\}$
and a positive integer $\alpha$, the scaled function 
$f^\alpha:\ZZ^n\to\RR\cup\{+\infty\}$ is also DDM-convex.
\end{theorem}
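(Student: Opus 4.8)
The plan is to reduce the claim that $f^\alpha$ is DDM-convex to checking the two-point inequality only for pairs at $\ell_\infty$-distance exactly $2$ in the scaled variables, using the characterization from Theorem~\ref{thm:char}~(2): a function is DDM-convex if and only if its effective domain is DDM-convex and it belongs to DDMC($2$). First I would verify that $\dom f^\alpha = \{x \in \ZZ^n \mid \alpha x \in \dom f\} = (\dom f)^\alpha$ is a DDM-convex set; this is the set-scaling analogue and I expect it to follow from the parallelogram machinery of Section~\ref{para-ineq} applied to $\dom f$, which we already know is DDM-convex. So the substantive work is to show $f^\alpha \in$ DDMC($2$).

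To that end I would fix $x,y \in \ZZ^n$ with $\|x-y\|_\infty = 2$ and translate the required inequality
\[
 f^\alpha(x)+f^\alpha(y) \ge f^\alpha(\dmid(x,y)) + f^\alpha(\dmid(y,x))
\]
into a statement about $f$ evaluated at the scaled points $\alpha x$, $\alpha y$, $\alpha\,\dmid(x,y)$, $\alpha\,\dmid(y,x)$. The key reformulation is that since $\|x-y\|_\infty=2$, the difference $y-x$ lies in $\{-2,-1,0,1,2\}^n$, and $\dmid(x,y),\dmid(y,x)$ split each coordinate of the midpoint in the direction of $x$ resp.\ $y$. Scaling by $\alpha$ turns these two scaled points into $\alpha x + d$ and $\alpha y - d$ for an appropriate direction vector $d$, and the plan is to realize this splitting as an instance of the parallelogram inequality (Theorem~\ref{thm:ddmcparalleloineq2}) for the DDM-convex function $f$ applied to the pair $(\alpha x, \alpha y)$.

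The main obstacle, and the crux of the proof, will be matching the scaled target points to the correct subset $J$ in the parallelogram inequality. Writing $u = \alpha x$ and $v = \alpha y$, I need to exhibit a subset $J \subseteq \{1,\ldots,M\}$ with $M=\|v-u\|_\infty = \alpha\|x-y\|_\infty = 2\alpha$ such that $d = \sum_{k\in J}(\vecone_{A_k}-\vecone_{B_k})$ equals precisely $\alpha(\dmid(x,y)-x)$, where $A_k,B_k$ are the level sets of $v-u$. The concrete verification splits on coordinates: a coordinate $i$ with $|x_i-y_i|\le 1$ contributes nothing to the midpoint displacement and must contribute nothing to $d$, while a coordinate with $|x_i-y_i|=2$ needs exactly $\alpha$ of the layers to select it so that $d_i = \pm\alpha$. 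Choosing $J$ to be the top $\alpha$ layers, namely $J = \{\alpha+1,\ldots,2\alpha\}$, should achieve this: for such $J$ we get $(\vecone_{A_k})_i=1$ precisely when $v_i-u_i = \alpha(y_i-x_i)\ge k$, and summing over $k\in J$ picks out exactly the coordinates where $y_i-x_i=2$ with weight $\alpha$, matching the directed rounding. I would then invoke Theorem~\ref{thm:ddmcparalleloineq2} with this $J$ to conclude $f(u)+f(v)\ge f(u+d)+f(v-d)$, which is exactly the desired DDMC($2$) inequality for $f^\alpha$. Finally, combining the DDM-convexity of $\dom f^\alpha$ with $f^\alpha\in$ DDMC($2$) via Theorem~\ref{thm:char}~(2) yields that $f^\alpha$ is DDM-convex.
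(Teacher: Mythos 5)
Your proposal is correct, but it routes through a different item of Theorem~\ref{thm:char} than the paper does. The paper verifies characterization (4) for $f^\alpha$: for arbitrary $x,y$ with $\|x-y\|_\infty=m$ it peels off the outermost layer $\vecone_{A_m}-\vecone_{B_m}$, and obtains the required inequality by applying characterization (5) to $f$ at $\alpha x,\alpha y$ with $d=\sum_{l=1}^{\alpha(m-1)}(\vecone_{A^\alpha_l}-\vecone_{B^\alpha_l})$, exploiting exactly the block structure $A_k=A^\alpha_{\alpha(k-1)+j}$ that you also identify. You instead verify characterization (2), which splits the work into two pieces: the DDMC($2$) inequality for $f^\alpha$, which you handle correctly (for $\|x-y\|_\infty=2$ the choice $J=\{\alpha+1,\dots,2\alpha\}$ does give $d=\alpha(\vecone_{A_2}-\vecone_{B_2})=\alpha(\dmid(x,y)-x)$ and hence $\alpha x+d=\alpha\,\dmid(x,y)$, $\alpha y-d=\alpha\,\dmid(y,x)$, so Theorem~\ref{thm:ddmcparalleloineq2} applies), and the DDM-convexity of $\dom f^\alpha$, which you only gesture at. That second piece is true but must actually be proved, and it cannot be cited from Corollary~\ref{prop:scalingset}, since in the paper that corollary is itself a consequence of the theorem you are proving; to close it, apply Theorem~\ref{thm:parallelogramset} to $S=\dom f$ and the pair $(\alpha x,\alpha y)$ with $J$ equal to the union of the \emph{even-indexed} blocks $\{\alpha(j-1)+1,\dots,\alpha j\}$ ($j$ even, $1\le j\le m$), which yields $d_i=\mathrm{sign}(y_i-x_i)\,\alpha\lfloor|y_i-x_i|/2\rfloor$, i.e.\ $\alpha x+d=\alpha\,\dmid(x,y)\in\dom f$ and $\alpha y-d=\alpha\,\dmid(y,x)\in\dom f$. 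The trade-off is clear: the paper's route via (4) needs no separate domain argument and treats all distances $m$ uniformly, while your route localizes the functional inequality to distance $2$ at the cost of this extra (but elementary) set-theoretic step.
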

\begin{proof}
By the equivalence between (1) and (4) of Theorem~\ref{thm:char},
it is sufficient to show that
\[
f^\alpha(x)+f^\alpha(y) \geq 
f^\alpha(x+\vecone_{A_m}-\vecone_{B_m})+f^\alpha(y-\vecone_{A_m}+\vecone_{B_m})
\]
for every $x, y\in\ZZ^n$ with $\|x-y\|_\infty = m$ 
and for families $\{A_k \mid k=1,\ldots,m\}$ and 
$\{B_k \mid k=1,\ldots,m\}$ defined by (\ref{ddmcparalleloset}).
The above inequality is written as
\begin{equation}\label{scal}
f(\alpha x)+f(\alpha y) \geq 
f(\alpha(x+\vecone_{A_m}-\vecone_{B_m}))+f(\alpha(y-\vecone_{A_m}+\vecone_{B_m})).
\end{equation}
For $\alpha x$ and $\alpha y$ with $\|\alpha x-\alpha y\|_\infty=\alpha m$, by defining
\[
A_l^\alpha=\{i \mid \alpha y_i-\alpha x_i\ge l\}, \quad 
B_l^\alpha=\{i \mid \alpha y_i-\alpha x_i\le -l\}\qquad(l=1, \dots, \alpha m),
\]
we have 
$\alpha y-\alpha x=\sum_{l=1}^{\alpha m}(\vecone_{A_l^\alpha}-\vecone_{B_l^\alpha})$ 
and
\[
A_k=A_{\alpha (k-1)+j}^\alpha,  \quad 
B_k=B_{\alpha (k-1)+j}^\alpha \qquad (k=1, \dots, m;\;  j=1, \dots, \alpha).
\]
By (5) of Theorem~\ref{thm:char} for
\[
d =\sum_{l=1}^{\alpha (m-1)} (\vecone_{A^\alpha_l}-\vecone_{B^\alpha_l}), \quad 
d' =\sum_{l=\alpha (m-1)+1}^{\alpha m} (\vecone_{A^\alpha_l}-
  \vecone_{B^\alpha_l})=\alpha(\vecone_{A_m}-\vecone_{B_m})
\]
we have
\begin{align*}
f(\alpha x)+f(\alpha y)&\ge f(\alpha x+d)+f(\alpha y-d)\\
&=f(\alpha y-d')+f(\alpha x+d')\\
&=f(\alpha(y-\vecone_{A_m}+\vecone_{B_m}))
 +f(\alpha(x+\vecone_{A_m}-\vecone_{B_m})),
\end{align*}
that is, (\ref{scal}).
\end{proof}

\begin{corollary}\label{prop:scalingset}
For a DDM-convex set $S\subseteq\ZZ^n$ and a positive integer $\alpha$,
the $\alpha$-scaled set $S^\alpha$ is also DDM-convex.
\end{corollary}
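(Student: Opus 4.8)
The plan is to reduce the statement about DDM-convex \emph{sets} to the already-established Theorem~\ref{thm:scaling} about DDM-convex \emph{functions}, by passing through indicator functions. The key observation is that DDM-convexity of a set is \emph{defined} as DDM-convexity of its indicator function, so the corollary is essentially a special case of the theorem once I identify the scaled indicator function with the indicator function of the scaled set.

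First I would recall that $S$ is DDM-convex precisely when $\delta_S$ is a DDM-convex function. By Theorem~\ref{thm:scaling}, the $\alpha$-scaled function $(\delta_S)^\alpha$ is then also DDM-convex, where by definition $(\delta_S)^\alpha(x) = \delta_S(\alpha x)$ for all $x \in \ZZ^n$. The main step is the identification
\[
 (\delta_S)^\alpha = \delta_{S^\alpha},
\]
which I would verify directly from the definitions: for any $x \in \ZZ^n$, we have $(\delta_S)^\alpha(x) = \delta_S(\alpha x)$, and this equals $0$ exactly when $\alpha x \in S$, i.e.\ when $x \in S^\alpha = \{x \in \ZZ^n \mid \alpha x \in S\}$, and equals $+\infty$ otherwise. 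This is precisely the value of $\delta_{S^\alpha}(x)$. Hence the two indicator functions agree pointwise.

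Combining these, $\delta_{S^\alpha} = (\delta_S)^\alpha$ is DDM-convex, which by the definition of a DDM-convex set means exactly that $S^\alpha$ is a DDM-convex set, completing the proof. I do not anticipate any real obstacle here: the whole content is the bookkeeping identity $(\delta_S)^\alpha = \delta_{S^\alpha}$, and the substantive work has already been carried out in Theorem~\ref{thm:scaling}. The only point requiring a modicum of care is confirming that the scaling operation on sets, $S^\alpha = \{x \mid \alpha x \in S\}$, is the set-level counterpart that matches the function-level scaling $f^\alpha(x) = f(\alpha x)$ under the indicator correspondence; once that is checked the corollary follows immediately.
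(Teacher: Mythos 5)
Your argument is correct and is exactly the route the paper intends: the corollary is stated without proof as an immediate consequence of Theorem~\ref{thm:scaling}, obtained by applying that theorem to the indicator function $\delta_S$ and observing the identity $(\delta_S)^\alpha = \delta_{S^\alpha}$, which you verify carefully. Nothing is missing.
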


\subsection{Restrictions}

For a function $f:\ZZ^{n+m}\to\RR\cup\{+\infty\}$, the {\em restriction} of $f$
on $\ZZ^n$ is the function $g$ defined by
\[
   g(x)=f(x, \veczero) \qquad (x\in\ZZ^n).
\]
For a set $S\subseteq\ZZ^{n+m}$, the restriction of $S$ on $\ZZ^n$ is also defined by
\[
  T=\{x\in\ZZ^n \mid (x, \veczero)\in S\}.
\]
Obviously, the following properties hold.

\begin{proposition}\label{prop:restriction}
For a DDM-convex function, its restrictions are also DDM-convex. 
\end{proposition}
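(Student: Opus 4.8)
The plan is to derive DDM-convexity of the restriction $g(x) = f(x,\veczero)$ directly from DDM-convexity of $f$ by lifting an arbitrary pair in $\ZZ^n$ to the slice $\{(x,\veczero) \mid x \in \ZZ^n\} \subseteq \ZZ^{n+m}$ and observing that the operation $\dmid(\cdot,\cdot)$ respects this slice. Fix $x,y \in \ZZ^n$; the goal is to establish the inequality
\[
 g(x)+g(y)\ge g(\dmid(x,y))+g(\dmid(y,x)),
\]
which, after unfolding $g(x)=f(x,\veczero)$, is exactly the condition (\ref{DDMC}) for $f$ evaluated at the lifted points $\tilde{x}=(x,\veczero)$ and $\tilde{y}=(y,\veczero)$.

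The key step is the componentwise computation of $\dmid(\tilde{x},\tilde{y})$ and $\dmid(\tilde{y},\tilde{x})$. For each index $i \le n$ the $i$-th components of $\tilde{x}$ and $\tilde{y}$ coincide with $x_i$ and $y_i$, so by the definition of $\dmid(\cdot,\cdot)$ the $i$-th component of $\dmid(\tilde{x},\tilde{y})$ equals $\dmid(x,y)_i$, and similarly for $\dmid(\tilde{y},\tilde{x})$. For each of the remaining $m$ indices, both $\tilde{x}$ and $\tilde{y}$ have component $0$; since rounding up or down of $\tfrac{0+0}{2}$ returns $0$ regardless of the direction, these components of $\dmid(\tilde{x},\tilde{y})$ and $\dmid(\tilde{y},\tilde{x})$ are again $0$. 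Hence
\[
 \dmid(\tilde{x},\tilde{y})=(\dmid(x,y),\veczero), \qquad
 \dmid(\tilde{y},\tilde{x})=(\dmid(y,x),\veczero).
\]

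With this identity in hand, applying the DDM-convexity inequality (\ref{DDMC}) of $f$ to $\tilde{x}$ and $\tilde{y}$ gives
\[
 f(x,\veczero)+f(y,\veczero)\ge f(\dmid(x,y),\veczero)+f(\dmid(y,x),\veczero),
\]
which is precisely the sought inequality for $g$. Since $x,y\in\ZZ^n$ were arbitrary, $g$ is DDM-convex. The restriction of a DDM-convex set follows either by the identical argument applied to membership, or by specializing the function statement to the indicator function $\delta_S$. I do not anticipate a genuine obstacle here: the entire content is the elementary observation that $\dmid(\cdot,\cdot)$ leaves a coordinate unchanged whenever the two arguments agree on it, so that fixing coordinates to $\veczero$ commutes with the operation; everything else is a direct substitution into (\ref{DDMC}).
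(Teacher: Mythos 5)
Your argument is correct and is exactly the computation the paper has in mind (the paper dismisses this proposition as obvious and gives no written proof): the componentwise definition of $\dmid(\cdot,\cdot)$ gives $\dmid((x,\veczero),(y,\veczero))=(\dmid(x,y),\veczero)$, and the inequality for $g$ then follows by direct substitution into (\ref{DDMC}) for $f$.
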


\begin{proposition}
For a DDM-convex set, its restrictions are also DDM-convex.
\end{proposition}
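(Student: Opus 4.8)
The plan is to reduce this set statement to the corresponding function statement, Proposition~\ref{prop:restriction}, via indicator functions. Recall that $T\subseteq\ZZ^n$ is DDM-convex precisely when its indicator function $\delta_T$ is DDM-convex, and that $S\subseteq\ZZ^{n+m}$ being DDM-convex means $\delta_S$ is DDM-convex. The key observation is that the indicator function of the restricted set $T=\{x\in\ZZ^n \mid (x,\veczero)\in S\}$ coincides with the restriction of $\delta_S$ on $\ZZ^n$: indeed $\delta_T(x)=0$ iff $(x,\veczero)\in S$ iff $\delta_S(x,\veczero)=0$, so $\delta_T(x)=\delta_S(x,\veczero)$ for all $x\in\ZZ^n$.

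First I would note that $\delta_S$ is DDM-convex by hypothesis. Then, by Proposition~\ref{prop:restriction}, its restriction $g(x)=\delta_S(x,\veczero)$ is DDM-convex. Since $g=\delta_T$, this says exactly that $\delta_T$ is DDM-convex, i.e.\ that $T$ is a DDM-convex set, which completes the argument. Alternatively, one can argue directly from the combinatorial definition: given $x,y\in T$ we have $(x,\veczero),(y,\veczero)\in S$, and a componentwise inspection of the definition of $\dmid(\cdot,\cdot)$ shows $\dmid((x,\veczero),(y,\veczero))=(\dmid(x,y),\veczero)$ and $\dmid((y,\veczero),(x,\veczero))=(\dmid(y,x),\veczero)$, because on the last $m$ coordinates both arguments equal $0$ and $\dmid(0,0)=0$. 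DDM-convexity of $S$ then places both vectors in $S$, which is exactly the statement $\dmid(x,y),\dmid(y,x)\in T$.

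Since the proposition is an immediate corollary of Proposition~\ref{prop:restriction}, there is no genuine obstacle here; the only point requiring a line of care is the identity $\delta_T(x)=\delta_S(x,\veczero)$ (equivalently, the zero-padding behavior of $\dmid$ on the restricted coordinates), which is routine and follows directly from the definition of $\dmid(\cdot,\cdot)$.
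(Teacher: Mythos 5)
Your proof is correct and matches the paper's intent: the paper states both restriction propositions without proof as ``obvious,'' and your reduction via $\delta_T(x)=\delta_S(x,\veczero)$ (or equivalently the direct check that $\dmid$ acts coordinatewise and fixes the zero-padded coordinates) is exactly the routine verification being elided. No issues.
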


\subsection{Projections}

For a function $f:\ZZ^{n+m}\to\RR\cup\{+\infty\}$, the {\em projection} of $f$
to $\ZZ^n$ is the function defined by
\begin{equation}\label{def:projectionfn}
g(x)=\inf\{f(x, y) \mid y\in\ZZ^{m}\}\qquad(x\in \ZZ^n),
\end{equation}
where we assume that $g(x)>-\infty$ for all $x \in \ZZ^n$.
For a set $S\subseteq\ZZ^{n+m}$, the projection of $S$ to $\ZZ^n$ is also defined by
\begin{equation*}\label{def:projectionset}
T=\{x\in\ZZ^n \mid \exists y\in\ZZ^m : (x, y)\in S\}.
\end{equation*}
In the same way as the proof for 
globally discrete midpoint convex functions
in \cite[Theorem 3.5]{projectionandconvolution} we can show the following property.

\begin{proposition}
For a DDM-convex function, its projections are DDM-convex.
\end{proposition}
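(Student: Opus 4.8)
The plan is to reduce DDM-convexity of the projection $g$ to that of $f$ by lifting an arbitrary pair in $\ZZ^n$ to near-optimal points in $\ZZ^{n+m}$ and applying (\ref{DDMC}) there. First I would fix $x, x' \in \ZZ^n$; if $g(x) = +\infty$ or $g(x') = +\infty$ the target inequality $g(x)+g(x') \ge g(\dmid(x,x'))+g(\dmid(x',x))$ holds automatically, so I may assume both are finite. For each $\varepsilon > 0$ I would then choose $y, y' \in \ZZ^m$ with $f(x,y) \le g(x)+\varepsilon$ and $f(x',y') \le g(x')+\varepsilon$, which exist by the definition (\ref{def:projectionfn}) of $g$ as an infimum over the last $m$ coordinates.

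The key structural observation is that $\dmid(\cdot,\cdot)$ is defined coordinatewise, so for the lifted points $(x,y),(x',y') \in \ZZ^{n+m}$ the first $n$ coordinates of $\dmid((x,y),(x',y'))$ agree with $\dmid(x,x')$ (the $i$-th coordinate for $i\le n$ is rounded toward $x_i$, exactly as in $\dmid(x,x')_i$) and the last $m$ coordinates agree with $\dmid(y,y')$, and symmetrically for the reversed pair. Hence
\[
\dmid((x,y),(x',y')) = (\dmid(x,x'),\dmid(y,y')), \quad \dmid((x',y'),(x,y)) = (\dmid(x',x),\dmid(y',y)).
\]
Applying DDM-convexity (\ref{DDMC}) of $f$ to $(x,y)$ and $(x',y')$ then gives
\[
f(x,y)+f(x',y') \ge f(\dmid(x,x'),\dmid(y,y')) + f(\dmid(x',x),\dmid(y',y)).
\]

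Next I would bound each term on the right from below using (\ref{def:projectionfn}): $f(\dmid(x,x'),\dmid(y,y')) \ge g(\dmid(x,x'))$ and $f(\dmid(x',x),\dmid(y',y)) \ge g(\dmid(x',x))$. Combining with the choice of $y,y'$ yields
\[
g(x)+g(x')+2\varepsilon \ge f(x,y)+f(x',y') \ge g(\dmid(x,x'))+g(\dmid(x',x)),
\]
and letting $\varepsilon \to 0$ establishes DDM-convexity of $g$. The only genuine subtlety is that the infimum defining $g$ need not be attained, which is why I use $\varepsilon$-approximate minimizers rather than exact ones; the standing assumption $g > -\infty$ guarantees these exist and keeps every quantity well defined. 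I therefore expect no serious obstacle, as everything beyond this reduces to the coordinatewise nature of $\dmid(\cdot,\cdot)$.
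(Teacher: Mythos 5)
Your proposal is correct and follows essentially the same route as the paper: lift to $\varepsilon$-approximate minimizers, apply DDM-convexity of $f$ to the lifted pair, use the coordinatewise identity $\dmid((x,y),(x',y')) = (\dmid(x,x'),\dmid(y,y'))$ to drop back to $g$ via the infimum, and let $\varepsilon \to 0$. The paper leaves the coordinatewise identity implicit, whereas you state it explicitly, but the argument is the same.
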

\begin{proof}
Let $g$ be the projection defined by (\ref{def:projectionfn}) of 
a DDM-convex function $f$.
For every $x^{(1)}, x^{(2)} \in \dom g$ and every $\varepsilon>0$, 
by the definition of the projection, there exist $y^{(1)}, y^{(2)}\in\ZZ^m$ with
$g(x^{(i)})\geq f(x^{(i)}, y^{(i)})-\varepsilon$ for $i=1, 2$.
Thus, we have
\begin{equation}\label{ddmcprojectionproof1}
g(x^{(1)})+g(x^{(2)})\ge f(x^{(1)}, y^{(1)})+f(x^{(2)}, y^{(2)})-2\varepsilon.
\end{equation}
By DDM-convexity of $f$ and the definition of the projection, we have
\begin{align}
& f(x^{(1)}, y^{(1)})+f(x^{(2)}, y^{(2)}) \notag \\
& \ge f(\dmid((x^{(1)}, y^{(1)}), (x^{(2)}, y^{(2)})))+
   f(\dmid((x^{(2)}, y^{(2)}), (x^{(1)}, y^{(1)}))) \notag \\ 
&\ge g(\dmid(x^{(1)}, x^{(2)}))+g(\dmid(x^{(2)}, x^{(1)})). \label{ddmcprojectionproof2}
\end{align}
By (\ref{ddmcprojectionproof1}) and (\ref{ddmcprojectionproof2}),
we obtain
\[
g(x^{(1)})+g(x^{(2)})\ge g(\dmid(x^{(1)}, x^{(2)}))+g(\dmid(x^{(2)}, x^{(1)}))-2\varepsilon
\]
for any $\varepsilon>0$, which guarantees DDM-convexity of $g$.
\end{proof}

\begin{corollary}
For a DDM-convex set, its projections are also DDM-convex.
\end{corollary}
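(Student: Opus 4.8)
The plan is to deduce this set statement from the preceding proposition on projections of DDM-convex functions by passing to indicator functions. Given a DDM-convex set $S \subseteq \ZZ^{n+m}$, its indicator function $\delta_S$ is DDM-convex by definition, and I would form its projection
\[
g(x) = \inf\{\delta_S(x, y) \mid y \in \ZZ^m\} \qquad (x \in \ZZ^n).
\]
Since $\delta_S$ takes only the values $0$ and $+\infty$, the hypothesis $g(x) > -\infty$ required by the projection proposition holds automatically, so that proposition applies and guarantees that $g$ is DDM-convex.

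The key step is then to identify $g$ with the indicator function $\delta_T$ of the projected set $T = \{x \in \ZZ^n \mid \exists y \in \ZZ^m : (x, y) \in S\}$. This is immediate: $g(x) = 0$ exactly when some $y$ satisfies $(x, y) \in S$, that is, when $x \in T$, and $g(x) = +\infty$ otherwise. Hence $g = \delta_T$, and the DDM-convexity of $g$ is by definition the statement that $T$ is a DDM-convex set, completing the argument.

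As an alternative I could argue directly, reading off the conclusion from the componentwise nature of $\dmid(\cdot, \cdot)$: for $x^{(1)}, x^{(2)} \in T$ with witnesses $y^{(1)}, y^{(2)}$, DDM-convexity of $S$ places both $\dmid((x^{(1)}, y^{(1)}), (x^{(2)}, y^{(2)}))$ and $\dmid((x^{(2)}, y^{(2)}), (x^{(1)}, y^{(1)}))$ in $S$; since $\dmid$ is computed coordinate by coordinate, the first $n$ coordinates of these two points equal $\dmid(x^{(1)}, x^{(2)})$ and $\dmid(x^{(2)}, x^{(1)})$, respectively, so both lie in $T$.

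There is essentially no obstacle here, since the statement is a genuine corollary of the function-level result. The only points requiring any care are the routine identification of the projected indicator function with $\delta_T$ and the observation that $\dmid$ respects the splitting of the coordinates into the $\ZZ^n$- and $\ZZ^m$-parts, both of which follow directly from the definitions.
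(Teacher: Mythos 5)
Your proof is correct and follows the same route the paper intends: the corollary is stated immediately after the proposition on projections of DDM-convex functions precisely because it follows by applying that proposition to the indicator function $\delta_S$ and identifying the resulting projection with $\delta_T$, exactly as you do. Your direct alternative argument via the componentwise nature of $\dmid(\cdot,\cdot)$ is also valid, but adds nothing essential beyond the indicator-function reduction.
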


\subsection{Direct sums}

For two functions $f_1:\ZZ^{n_1}\to\RR\cup\{+\infty\}$ and 
$f_2:\ZZ^{n_2}\to\RR\cup\{+\infty\}$, the {\em direct sum} of $f_1$ and $f_2$
is the function $f_1\oplus f_2:\ZZ^{n_1+n_2}\to\RR\cup\{+\infty\}$ defined by
\[
(f_1\oplus f_2)(x, y)=f_1(x)+f_2(y) \qquad(x\in\ZZ^{n_1},\; y\in\ZZ^{n_2}).
\]
For two sets $S_1\subseteq\ZZ^{n_1}$ and $S_2\subseteq\ZZ^{n_2}$, the direct sum
of $S_1$ and $S_2$ is defined by
\[
S_1\oplus S_2=\{(x, y) \mid x\in S_1,\; y\in S_2\}.
\]
DDM-convexity is closed under direct sums as below.

\begin{proposition}
For two DDM-convex functions $f_1:\ZZ^{n_1}\to\RR\cup\{+\infty\}$ and
$f_2:\ZZ^{n_2}\to\RR\cup\{+\infty\}$, the direct sum $f_1\oplus f_2$ 
is also DDM-convex.
\end{proposition}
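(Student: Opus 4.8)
The plan is to reduce DDM-convexity of $f_1 \oplus f_2$ directly to that of $f_1$ and $f_2$ by exploiting the fact that the operation $\dmid(\cdot,\cdot)$ acts componentwise and therefore respects the product structure of $\ZZ^{n_1+n_2} = \ZZ^{n_1} \times \ZZ^{n_2}$. Concretely, I would first record the key structural observation: for any $x^{(1)},x^{(2)}\in\ZZ^{n_1}$ and $y^{(1)},y^{(2)}\in\ZZ^{n_2}$,
\[
\dmid\bigl((x^{(1)},y^{(1)}),(x^{(2)},y^{(2)})\bigr)
=\bigl(\dmid(x^{(1)},x^{(2)}),\,\dmid(y^{(1)},y^{(2)})\bigr),
\]
since each component $\dmid(\cdot,\cdot)_i$ depends only on the $i$-th entries of the two argument vectors. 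The same identity holds with the arguments swapped, giving the corresponding expression for $\dmid\bigl((x^{(2)},y^{(2)}),(x^{(1)},y^{(1)})\bigr)$.

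With this identity in hand, the verification of inequality (\ref{DDMC}) for $f_1\oplus f_2$ is a matter of adding two instances of (\ref{DDMC}). Fixing arbitrary points $(x^{(1)},y^{(1)}),(x^{(2)},y^{(2)})\in\ZZ^{n_1+n_2}$, I would expand the left-hand side as $f_1(x^{(1)})+f_1(x^{(2)})+f_2(y^{(1)})+f_2(y^{(2)})$, apply DDM-convexity of $f_1$ to the pair $(x^{(1)},x^{(2)})$ and DDM-convexity of $f_2$ to the pair $(y^{(1)},y^{(2)})$, and then sum the two resulting inequalities. The right-hand side regroups, via the componentwise identity above and the definition of the direct sum, into exactly
\[
(f_1\oplus f_2)\bigl(\dmid((x^{(1)},y^{(1)}),(x^{(2)},y^{(2)}))\bigr)
+(f_1\oplus f_2)\bigl(\dmid((x^{(2)},y^{(2)}),(x^{(1)},y^{(1)}))\bigr),
\]
which is precisely what (\ref{DDMC}) demands for $f_1\oplus f_2$.

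I do not anticipate a genuine obstacle here; the only point requiring a moment's care is the componentwise splitting of $\dmid$, and even that is immediate from the defining case distinction for $\dmid(x,y)_i$, since the sign comparison $x_i \ge y_i$ or $x_i < y_i$ and the rounding of $(x_i+y_i)/2$ involve a single coordinate at a time. The argument is visibly symmetric in the two blocks and extends verbatim to finitely many summands, so I would close by noting that the analogous statement for DDM-convex \emph{sets} follows by specializing to indicator functions, which is exactly the content of the subsequent corollary.
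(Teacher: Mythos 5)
Your argument is correct and is essentially the paper's own proof: both rest on the observation that $\dmid(\cdot,\cdot)$ acts coordinatewise and hence splits across the two blocks, after which one adds the DDM-convexity inequalities for $f_1$ and $f_2$ and regroups. The only difference is a relabeling of which superscript indexes the block versus the point.
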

\begin{proof}
For every $x^{(1)}, y^{(1)}\in\ZZ^{n_1}$ and $x^{(2)}, y^{(2)}\in\ZZ^{n_2}$,
it follows from DDM-convexity of $f_1$ and $f_2$ that
\begin{align}
&f_1(x^{(1)})+f_1(y^{(1)})\geq f_1(\dmid(x^{(1)}, y^{(1)}))+f_1(\dmid(y^{(1)}, x^{(1)})),
\label{1def}\\
&f_2(x^{(2)})+f_2(y^{(2)})\geq f_2(\dmid(x^{(2)}, y^{(2)}))+f_2(\dmid(y^{(2)}, x^{(2)})).
\label{2def}
\end{align}
By the following relations
\begin{align*}
&\dmid((x^{(1)}, x^{(2)}), (y^{(1)}, y^{(2)}))
  =(\dmid(x^{(1)}, y^{(1)}), \dmid(x^{(2)}, y^{(2)})), \\ 
&\dmid((y^{(1)}, y^{(2)}), (x^{(1)}, x^{(2)}))
  =(\dmid(y^{(1)}, x^{(1)}), \dmid(y^{(2)}, x^{(2)})),
\end{align*}
and by (\ref{1def}) and (\ref{2def}), we obtain
\begin{align*}
&(f_1\oplus f_2)(x^{(1)}, x^{(2)})+(f_1\oplus f_2)(y^{(1)}, y^{(2)})\\
&=[f_1(x^{(1)})+f_2(x^{(2)})]+[f_1(y^{(1)})+f_2(y^{(2)})]\\
&\ge [f_1(\dmid(x^{(1)}, y^{(1)}))+f_2(\dmid(x^{(2)}, y^{(2)}))]+[f_1(\dmid(y^{(1)}, x^{(1)}))+f_2(\dmid(y^{(2)}, x^{(2)}))]\\
&=(f_1\oplus f_2)( (\dmid(x^{(1)}, y^{(1)}), \dmid(x^{(2)}, y^{(2)})) )+
(f_1\oplus f_2)( (\dmid(y^{(1)}, x^{(1)}), \dmid(y^{(2)}, x^{(2)})) )\\
&=(f_1\oplus f_2)( \dmid((x^{(1)}, x^{(2)}), (y^{(1)}, y^{(2)})) )+
(f_1\oplus f_2)( \dmid((y^{(1)}, y^{(2)}), (x^{(1)}, x^{(2)})) ),
\end{align*}
which says DDM-convexity of $f_1 \oplus f_2$.
\end{proof}

\begin{corollary}
For two DDM-convex sets $S_1\subseteq\ZZ^{n_1}$ and $S_2\subseteq\ZZ^{n_2}$,
$S_1\oplus S_2$ is also DDM-convex.
\end{corollary}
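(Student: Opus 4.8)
The plan is to derive this set-level statement as a direct consequence of the preceding proposition on direct sums of DDM-convex functions, by passing through indicator functions. Recall that, by definition, a set $T \subseteq \ZZ^{n}$ is DDM-convex precisely when its indicator function $\delta_T$ is DDM-convex. Hence it suffices to show that $\delta_{S_1 \oplus S_2}$ is a DDM-convex function whenever $\delta_{S_1}$ and $\delta_{S_2}$ are.

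The key identity I would establish first is
\[
 \delta_{S_1 \oplus S_2}(x, y) = \delta_{S_1}(x) + \delta_{S_2}(y) = (\delta_{S_1} \oplus \delta_{S_2})(x, y)
 \qquad (x \in \ZZ^{n_1},\ y \in \ZZ^{n_2}).
\]
This is immediate from the definitions: the left-hand side equals $0$ exactly when $(x,y) \in S_1 \oplus S_2$, i.e.\ when $x \in S_1$ and $y \in S_2$, which is exactly when both $\delta_{S_1}(x)$ and $\delta_{S_2}(y)$ vanish; since both indicator values lie in $\{0, +\infty\}$, their sum is $0$ in this case and $+\infty$ otherwise, matching the left-hand side. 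Since $S_1$ and $S_2$ are DDM-convex, $\delta_{S_1}$ and $\delta_{S_2}$ are DDM-convex functions, so the preceding proposition applies to $\delta_{S_1} \oplus \delta_{S_2}$ and yields that this function, hence $\delta_{S_1 \oplus S_2}$, is DDM-convex. Therefore $S_1 \oplus S_2$ is a DDM-convex set.

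Alternatively, one could argue purely combinatorially: for $(x^{(1)}, x^{(2)}), (y^{(1)}, y^{(2)}) \in S_1 \oplus S_2$, use the block decomposition $\dmid((x^{(1)}, x^{(2)}), (y^{(1)}, y^{(2)})) = (\dmid(x^{(1)}, y^{(1)}), \dmid(x^{(2)}, y^{(2)}))$ already recorded in the proof of the proposition, together with the DDM-convexity of $S_1$ and $S_2$ applied blockwise, to conclude that both directed midpoints lie in $S_1 \oplus S_2$. I do not expect any genuine obstacle here: the only nontrivial ingredient is the separation of $\dmid$ over the two coordinate blocks, which follows from its componentwise definition and was already verified in the function case. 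The indicator-function route is the cleanest, since it simply reuses the established proposition verbatim.
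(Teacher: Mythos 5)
Your proposal is correct and follows exactly the route the paper intends for this corollary: identify $\delta_{S_1\oplus S_2}$ with $\delta_{S_1}\oplus\delta_{S_2}$ and invoke the preceding proposition on direct sums of DDM-convex functions. No issues.
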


\subsection{Convolutions}

For two functions $f_1, f_2:\ZZ^{n}\to\RR\cup\{+\infty\}$, the {\em convolution}
$f_1\square f_2$ is the function defined by
\[
(f_1\square f_2)(x)=\inf\{f_1(y)+f_2(z) \mid  x=y+z,\; \ y, z\in\ZZ^n\} \qquad (x\in\ZZ^n),
\]
where we assume  $(f_1\square f_2)(x)>-\infty$ for every $x \in \ZZ^n$.
For two sets $S_1, S_2\subseteq\ZZ^n$, the {\em Minkowski sum} $S_1 + S_2$
defined by
\[
S_1+S_2=\{x+y \mid x\in S_1, y\in S_2 \}
\]
corresponds to the convolution of indicator functions 
$\delta_{S_1}$ and $\delta_{S_2}$.
The next example shows that the Minkowski sum of two DDM-convex sets
may not be DDM-convex, and hence, DDM-convexity is not closed under
the convolutions.

\begin{example}
We borrow the example in \cite[Example 4.2]{projectionandconvolution}, which shows
that L$^\natural$-convexity and global/local discrete midpoint convexity
may not be closed under the convolutions.

 Let $S_1=\{(0, 0, 0), (1, 1, 0)\}$ and $S_2=\{(0, 0, 0), (0, 1, 1)\}$ which are
 DDM-convex.
 The Minkowski sum of $S_1$ and $S_2$
\[
S_1+S_2=\{(0, 0, 0), (0, 1, 1), (1, 1, 0), (1, 2, 1)\}
\]
is not DDM-convex, because for $x=(0, 0, 0)$ and $y=(1, 2, 1)$,
we have $\dmid(x, y)=(0, 1, 0)\not\in S_1+S_2$ and
$\dmid(y, x)=(1, 1, 1)\not\in S_1+S_2$.
\hfill\finbox
\end{example}

It is known that the convolution of an L$^\natural$-convex function and 
a separable convex function is also L$^\natural$-convex, where
a separable convex function $\varphi : \ZZ^n \to \RR \cup \{+\infty\}$ is given by
\[
 \varphi(x) = \sum_{i=1}^n \varphi_i(x_i) \qquad (x \in \ZZ^n)
\]
for univariate discrete convex functions $\varphi_i (i=1,\ldots,n)$.
By the same arguments of \cite[Proposition 4.7]{projectionandconvolution}, 
this can be extended to DDM-convexity.

\begin{proposition}
The convolution of a DDM-convex function and a separable convex function 
is also DDM-convex.
\end{proposition}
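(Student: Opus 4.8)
The plan is to follow the projection argument given just above, replacing the single use of DDM-convexity of $f_1$ by one such use together with a coordinatewise application of discrete convexity of the separable part. Write $g=f_1\square\varphi$ with $\varphi(x)=\sum_{i=1}^n\varphi_i(x_i)$, fix $x^{(1)},x^{(2)}\in\dom g$ and $\varepsilon>0$, and use the definition of the convolution to pick decompositions $x^{(i)}=y^{(i)}+z^{(i)}$ with $g(x^{(i)})\ge f_1(y^{(i)})+\varphi(z^{(i)})-\varepsilon$ for $i=1,2$. Abbreviating $P=\dmid(x^{(1)},x^{(2)})$ and $Q=\dmid(x^{(2)},x^{(1)})$, and using that $g(P)\le f_1(a)+\varphi(c)$ for every integer decomposition $P=a+c$ (and similarly for $Q$), it suffices to exhibit decompositions $P=p+c$ and $Q=q+d$ with $f_1(p)+f_1(q)+\varphi(c)+\varphi(d)\le f_1(y^{(1)})+f_1(y^{(2)})+\varphi(z^{(1)})+\varphi(z^{(2)})$; letting $\varepsilon\to0$ then yields DDM-convexity of $g$.

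First I would set $p=\dmid(y^{(1)},y^{(2)})$ and $q=\dmid(y^{(2)},y^{(1)})$, so that DDM-convexity of $f_1$ gives $f_1(p)+f_1(q)\le f_1(y^{(1)})+f_1(y^{(2)})$, and then define $c=P-p$ and $d=Q-q$. By Proposition~\ref{prop:basic}~(1) applied to both pairs we have $P+Q=x^{(1)}+x^{(2)}$ and $p+q=y^{(1)}+y^{(2)}$, whence $c+d=z^{(1)}+z^{(2)}$; in particular $c,d\in\ZZ^n$. Since $\varphi$ is separable, the remaining inequality $\varphi(c)+\varphi(d)\le\varphi(z^{(1)})+\varphi(z^{(2)})$ splits into the univariate inequalities $\varphi_i(c_i)+\varphi_i(d_i)\le\varphi_i(z^{(1)}_i)+\varphi_i(z^{(2)}_i)$. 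Because $c_i+d_i=z^{(1)}_i+z^{(2)}_i$, discrete convexity of $\varphi_i$ reduces each of these to the single containment $\min\{z^{(1)}_i,z^{(2)}_i\}\le c_i\le\max\{z^{(1)}_i,z^{(2)}_i\}$, the matching bound on $d_i$ being automatic from the fixed sum.

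Establishing this containment coordinatewise is the crux, and the step I expect to require the most care. Fixing $i$ and writing $m_i=(z^{(1)}_i+z^{(2)}_i)/2$, the containment is equivalent to $|c_i-m_i|\le|z^{(1)}_i-z^{(2)}_i|/2$. Since $P_i$ and $p_i$ are the roundings of $(x^{(1)}_i+x^{(2)}_i)/2$ and $(y^{(1)}_i+y^{(2)}_i)/2$ in the directions prescribed by the definition of $\dmid(\cdot,\cdot)$, the quantity $c_i-m_i=P_i-p_i-m_i$ equals the difference of two rounding errors, each lying in $\{-\tfrac12,0,+\tfrac12\}$. The plan is a short parity case split: if $|z^{(1)}_i-z^{(2)}_i|\ge2$ the right-hand side is at least $1$ and the bound is immediate; if $z^{(1)}_i=z^{(2)}_i$ then $x^{(1)}_i-x^{(2)}_i=y^{(1)}_i-y^{(2)}_i$, so the two roundings use the same direction (and the same parity), their errors coincide, and $c_i=m_i$; and if $|z^{(1)}_i-z^{(2)}_i|=1$ then $x^{(1)}_i+x^{(2)}_i$ and $y^{(1)}_i+y^{(2)}_i$ have opposite parities, so exactly one of the two rounding errors is nonzero and $|c_i-m_i|=\tfrac12=|z^{(1)}_i-z^{(2)}_i|/2$. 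Hence the containment holds in every case.

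Combining the pieces gives $g(P)+g(Q)\le f_1(p)+f_1(q)+\varphi(c)+\varphi(d)\le f_1(y^{(1)})+f_1(y^{(2)})+\varphi(z^{(1)})+\varphi(z^{(2)})\le g(x^{(1)})+g(x^{(2)})+2\varepsilon$, and since $\varepsilon>0$ was arbitrary, $g$ is DDM-convex. The only delicate point is the rounding-direction bookkeeping in the third paragraph; once it is organized through Proposition~\ref{prop:dmid} and the definition of $\dmid(\cdot,\cdot)$, the parity argument is routine.
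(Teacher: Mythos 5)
Your proof is correct and follows essentially the same route as the paper's: the same $\varepsilon$-approximate decompositions, the same choice $c=\dmid(x^{(1)},x^{(2)})-\dmid(y^{(1)},y^{(2)})$ and $d=\dmid(x^{(2)},x^{(1)})-\dmid(y^{(2)},y^{(1)})$, and the same reduction to the univariate containment $\min\{z^{(1)}_i,z^{(2)}_i\}\le c_i,d_i\le\max\{z^{(1)}_i,z^{(2)}_i\}$ combined with discrete convexity of each $\varphi_i$. The only difference is that you verify this containment by a parity/rounding-error argument, whereas the paper runs a four-case analysis on the signs of $x^{(1)}_i-x^{(2)}_i$ and $y^{(1)}_i-y^{(2)}_i$ with explicit floor/ceiling manipulations; both verifications are valid.
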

\begin{proof}
Let $f :\ZZ^n\to\RR\cup\{+\infty\}$ be a DDM-convex function,
$\varphi:\ZZ^n\to\RR\cup\{+\infty\}$ a separable convex function 
represented as $\sum_{i=1}^n \varphi_i$ and let $g=f\square \varphi$.
For every $x^{(1)}, x^{(2)}\in\dom  g$ and $\varepsilon>0$, by the definition of
convolutions, there exist $y^{(i)}, z^{(i)}\; (i=1, 2)$ such that
\begin{equation}\label{ddmc-sepa-conv-ineq1}
g(x^{(i)})\geq f(y^{(i)})+\varphi(z^{(i)})-\varepsilon, \ \ x^{(i)}=y^{(i)}+z^{(i)}
\qquad (i=1, 2).
\end{equation}
It follows from DDM-convexity of $f$ that
\begin{equation}\label{ddmc-sepa-conv-ineq2}
f(y^{(1)})+f(y^{(2)})\geq f(\dmid(y^{(1)}, y^{(2)}))+f(\dmid(y^{(2)}, y^{(1)})).
\end{equation}
Let 
\[
z^\prime=\dmid(x^{(1)}, x^{(2)})-\dmid(y^{(1)}, y^{(2)}), \quad 
z^{\prime\prime}=\dmid(x^{(2)}, x^{(1)})-\dmid(y^{(2)}, y^{(1)}).
\]
By $g = f\square\varphi$, we have
\begin{align}
f(\dmid(y^{(1)}, y^{(2)}))+\varphi(z^\prime)\geq g(\dmid(x^{(1)}, x^{(2)})), 
  \label{ddmc-sepa-conv-ineq3} \\
f(\dmid(y^{(2)}, y^{(1)}))+\varphi(z^{\prime\prime})\geq g(\dmid(x^{(2)}, x^{(1)})).
  \label{ddmc-sepa-conv-ineq4}
\end{align}
The claim, below, states that
\begin{equation}\label{ddmc-sepa-conv-cl}
\varphi_i(z_i^{(1)})+\varphi_i(z_i^{(2)})\ge \varphi_i(z_i^\prime)+\varphi_i(z_i^{\prime\prime})
\end{equation}
for each $i=1, \dots, n$.
Thus, we have
\begin{equation}\label{ddmc-sepa-conv-ineq5}
\varphi(z^{(1)})+\varphi(z^{(2)})\ge \varphi(z^{\prime})+\varphi(z^{\prime\prime}).
\end{equation}
By summing up (\ref{ddmc-sepa-conv-ineq1}), (\ref{ddmc-sepa-conv-ineq2}), 
(\ref{ddmc-sepa-conv-ineq3}), (\ref{ddmc-sepa-conv-ineq4}) and
(\ref{ddmc-sepa-conv-ineq5}), we obtain
\[
g(x^{(1)})+g(x^{(2)})\ge g(\dmid(x^{(1)}, x^{(2)}))+g(\dmid(x^{(2)}, x^{(1)}))-2\varepsilon
\]
for any $\varepsilon>0$, which guarantees DDM-convexity of $g$.

\smallskip
\noindent \textbf{Claim:} (\ref{ddmc-sepa-conv-cl}) holds.

\smallskip\noindent(Proof) 
Since $\varphi_i$ is univariate discrete convex, 
for every $a, b\in\ZZ$ with $a\le b$ and for every $p, q\in\ZZ$ such that
(i) $a+b=p+q$, (ii) $a \leq p \leq b$ and (iii) $a \leq  q \leq b$, we have
$\varphi_i(a)+\varphi_i(b) \geq \varphi_i(p)+\varphi_i(q)$.
Thus, it is enough to show the following relations:
\begin{align}
&z_i^{(1)}+z_i^{(2)}=z_i^\prime+z_i^{\prime\prime}, 
  \label{ddmc-sepa-conv-cl-ineq1} \\
&\min\{z_i^{(1)}, z_i^{(2)}\} \leq z_i^\prime \leq \max\{z_i^{(1)}, z_i^{(2)}\}, 
  \label{ddmc-sepa-conv-cl-ineq2} \\
&\min\{z_i^{(1)}, z_i^{(2)}\} \leq z_i^{\prime\prime} \leq \max\{z_i^{(1)}, z_i^{(2)}\}.
  \label{ddmc-sepa-conv-cl-ineq3}
\end{align}
Condition (\ref{ddmc-sepa-conv-cl-ineq1}) follows from
\begin{align*}
  \dmid(x^{(1)}, x^{(2)})_i + \dmid(x^{(2)}, x^{(1)})_i &= x_i^{(1)} + x_i^{(2)},  \\
  \dmid(y^{(1)}, y^{(2)})_i + \dmid(y^{(2)}, y^{(1)})_i &= y_i^{(1)} + y_i^{(2)},
\end{align*}
and
\begin{align*}
z_i^\prime+z_i^{\prime\prime}
&=(\dmid(x^{(1)}, x^{(2)})_i-\dmid(y^{(1)}, y^{(2)})_i )
+( \dmid(x^{(2)}, x^{(1)})_i-\dmid(y^{(2)}, y^{(1)})_i )\\
&=( \dmid(x^{(1)}, x^{(2)})_i+\dmid(x^{(2)}, x^{(1)})_i  )
-( \dmid(y^{(1)}, y^{(2)})_i+\dmid(y^{(2)}, y^{(1)})_i )\\
&=(x_i^{(1)}+x_i^{(2)})-(y_i^{(1)}+y_i^{(2)})
=(x_i^{(1)}-y_i^{(1)})+(x_i^{(2)}-y_i^{(2)}) \\
&=z_i^{(1)}+z_i^{(2)}.
\end{align*}
To show (\ref{ddmc-sepa-conv-cl-ineq2}) and (\ref{ddmc-sepa-conv-cl-ineq3}),
we consider the following cases:
Case 1: $x^{(1)}_i\geq x^{(2)}_i$ and $y^{(1)}_i\geq y^{(2)}_i$,
Case 2: $x^{(1)}_i< x^{(2)}_i$ and $y^{(1)}_i< y^{(2)}_i$,
Case 3: $x^{(1)}_i\ge x^{(2)}_i$ and $y^{(1)}_i< y^{(2)}_i$,
Case 4: $x^{(1)}_i< x^{(2)}_i$ and $y^{(1)}_i\ge y^{(2)}_i$.

Case 1 ($x^{(1)}_i\geq x^{(2)}_i$ and $y^{(1)}_i\geq y^{(2)}_i$).
In this case, we have
\[
z^\prime_i=\left\lceil \frac{x^{(1)}_i{+}x^{(2)}_i}{2} \right\rceil - 
 \left\lceil \frac{y^{(1)}_i{+}y^{(2)}_i}{2} \right\rceil , \quad
z^{\prime\prime}_i=\left\lfloor \frac{x^{(1)}_i{+}x^{(2)}_i}{2} \right\rfloor - 
 \left\lfloor \frac{y^{(1)}_i{+}y^{(2)}_i}{2} \right\rfloor.
\]
By substituting $z_i^{(1)}{+}z_i^{(2)}=(x_i^{(1)}{+}x_i^{(2)})-(y_i^{(1)}{+}y_i^{(2)})$ into
\[
\min\{z_i^{(1)}, z_i^{(2)}\} \leq 
 \frac{z_i^{(1)}{+}z_i^{(2)}}{2} \leq \max\{z_i^{(1)}, z_i^{(2)}\},
\]
we obtain
\[
\min\{z_i^{(1)}, z_i^{(2)}\} + \frac{y_i^{(1)}{+}y_i^{(2)}}{2}
\leq \frac{x_i^{(1)}{+}x_i^{(2)}}{2}
\leq \max\{z_i^{(1)}, z_i^{(2)}\} + \frac{y_i^{(1)}{+}y_i^{(2)}}{2}.
\]
In the above inequalities, we round up and round down every terms, 
to obtain
\begin{align*}
\min\{z_i^{(1)}, z_i^{(2)}\}+\left\lceil\frac{y_i^{(1)}{+}y_i^{(2)}}{2}\right\rceil
&\leq \left\lceil\frac{x_i^{(1)}{+}x_i^{(2)}}{2}\right\rceil
\leq \max\{z_i^{(1)}, z_i^{(2)}\}+\left\lceil\frac{y_i^{(1)}{+}y_i^{(2)}}{2}\right\rceil, \\
\min\{z_i^{(1)}, z_i^{(2)}\}+\left\lfloor\frac{y_i^{(1)}{+}y_i^{(2)}}{2}\right\rfloor
&\leq \left\lfloor\frac{x_i^{(1)}{+}x_i^{(2)}}{2}\right\rfloor
\leq \max\{z_i^{(1)}, z_i^{(2)}\}+\left\lfloor\frac{y_i^{(1)}{+}y_i^{(2)}}{2}\right\rfloor.
\end{align*}
Thus, (\ref{ddmc-sepa-conv-cl-ineq2}) and (\ref{ddmc-sepa-conv-cl-ineq3}) are 
satisfied.

Case 2 ($x^{(1)}_i< x^{(2)}_i$ and $y^{(1)}_i< y^{(2)}_i$). 
In the same way as Case 1, we can show 
(\ref{ddmc-sepa-conv-cl-ineq2}) and (\ref{ddmc-sepa-conv-cl-ineq3}).

Case 3 ($x^{(1)}_i\ge x^{(2)}_i$ and $y^{(1)}_i< y^{(2)}_i$).
In this case, we have
\[
z^\prime_i=\left\lceil \frac{x^{(1)}_i{+}x^{(2)}_i}{2} \right\rceil - 
\left\lfloor \frac{y^{(1)}_i{+}y^{(2)}_i}{2} \right\rfloor , \quad
z^{\prime\prime}_i=\left\lfloor \frac{x^{(1)}_i{+}x^{(2)}_i}{2} \right\rfloor - 
\left\lceil \frac{y^{(1)}_i{+}y^{(2)}_i}{2} \right\rceil.
\]
Moreover, we obtain
\begin{align*}
z_i^{(1)}=x_i^{(1)}-y_i^{(1)}&\geq \left\lceil\frac{x^{(1)}_i{+}x^{(2)}_i}{2} \right\rceil - 
\left\lfloor \frac{y^{(1)}_i{+}y^{(2)}_i}{2} \right\rfloor = z^{\prime}_i \\
&\geq \left\lfloor \frac{x^{(1)}_i{+}x^{(2)}_i}{2} \right\rfloor - 
\left\lceil \frac{y^{(1)}_i{+}y^{(2)}_i}{2}\right\rceil = z^{\prime\prime}_i \\
&\ge x_i^{(2)}-y_i^{(2)}
=z_i^{(2)},
\end{align*}
and hence, (\ref{ddmc-sepa-conv-cl-ineq2}) and (\ref{ddmc-sepa-conv-cl-ineq3}).

Case 4: ($x^{(1)}_i< x^{(2)}_i$ and $y^{(1)}_i\ge y^{(2)}_i$).
In the same way as Case 3, we can show 
(\ref{ddmc-sepa-conv-cl-ineq2}) and (\ref{ddmc-sepa-conv-cl-ineq3}).
(End of the proof of Claim).
\end{proof}

\begin{corollary}
Minkowski sum of a DDM-convex set and an integral box is also DDM-convex,
where an integral box is the set defined by 
$\{ x \in \ZZ^n \mid a \leq x \leq b\}$ for some $a \in (\ZZ \cup \{-\infty\})^n$ 
and $b \in (\ZZ \cup \{+\infty\})^n$ with $a \leq b$.
\end{corollary}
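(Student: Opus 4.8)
The plan is to recognize this corollary as an immediate specialization of the preceding proposition on convolutions, with an integral box playing the role of a separable convex function. First I would observe that an integral box $B = \{x \in \ZZ^n \mid a \leq x \leq b\}$ is exactly the effective domain of a separable convex function: define $\varphi = \sum_{i=1}^n \varphi_i$, where each univariate function $\varphi_i : \ZZ \to \RR \cup \{+\infty\}$ is the indicator of the integer interval between $a_i$ and $b_i$, that is, $\varphi_i(t) = 0$ when $a_i \leq t \leq b_i$ and $\varphi_i(t) = +\infty$ otherwise. Each $\varphi_i$ is univariate discrete convex, since the inequality $\varphi_i(t-1) + \varphi_i(t+1) \geq 2\varphi_i(t)$ can fail only when its left-hand side is finite while its right-hand side is infinite, and this never occurs for the indicator of an interval of consecutive integers. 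Hence $\varphi$ is a separable convex function, and by construction $\varphi = \delta_B$.

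Next I would identify the Minkowski sum with a convolution at the level of indicator functions. For a DDM-convex set $S$ the indicator $\delta_S$ is DDM-convex by definition, and a direct computation gives $\delta_S \square \varphi = \delta_{S+B}$: the infimum $\inf\{\delta_S(y) + \varphi(z) \mid x = y + z,\ y,z \in \ZZ^n\}$ equals $0$ precisely when $x$ admits a decomposition $x = y + z$ with $y \in S$ and $z \in B$, and equals $+\infty$ otherwise. Because both $\delta_S$ and $\varphi$ take values in $\{0, +\infty\}$, the convolution never attains $-\infty$, so the standing assumption $(\delta_S \square \varphi)(x) > -\infty$ in the definition of convolution is automatically met.

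Finally I would invoke the preceding proposition, which asserts that the convolution of a DDM-convex function and a separable convex function is DDM-convex. Applying it to $\delta_S$ and $\varphi$ shows that $\delta_{S+B} = \delta_S \square \varphi$ is DDM-convex, which is exactly the statement that $S + B$ is a DDM-convex set. I do not expect any genuine obstacle here; the only point requiring care is the bookkeeping for infinite entries of $a$ and $b$, but the indicator-of-an-interval argument goes through verbatim when $a_i = -\infty$ or $b_i = +\infty$, since a half-line and the whole line of integers are still discrete convex intervals. Thus the corollary reduces cleanly to the convolution result rather than demanding a fresh $\dmid$-based argument.
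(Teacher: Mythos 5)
Your proposal is correct and is precisely the derivation the paper intends: the corollary is stated immediately after the proposition that the convolution of a DDM-convex function with a separable convex function is DDM-convex, and the paper has already identified Minkowski sums of sets with convolutions of their indicator functions, so realizing the box as the effective domain of a separable convex indicator and applying that proposition is the expected (and only needed) argument. Your checks that each interval indicator is univariate discrete convex (including the half-infinite cases) and that the convolution stays above $-\infty$ are exactly the right points of care.
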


%%%%%%%%%%%%%%%%%%%%%%%%%%%%%%%%%%%%%%%%%%%%%%%%
\section{Proximity theorems}\label{proximity}

For a function $f:\ZZ^n \to \RR \cup \{+\infty\}$ and a positive integer $\alpha$,
a proximity theorem estimates the distance between a given local minimizer
$x^\alpha$ of the $\alpha$-scaled function $f^\alpha$ and a minimizer $x^*$ of $f$.
For instance, the following proximity theorems for L$^\natural$-convex functions and
globally/locally discrete midpoint convex functions are known.

\begin{theorem}[\cite{IS2002,DCA}]
Let $f:\ZZ^n \to \RR \cup \{+\infty\}$ be an L$^\natural$-convex function,
$\alpha$ be a positive integer and $x^\alpha \in \dom f$.
If $f(x^\alpha)\le f(x^\alpha+\alpha d)$ for all $d\in\{0, +1\}^n \cup \{0,-1\}^n$, then
there exists $x^*\in\argmin f$ with $\|x^\alpha-x^*\|_\infty\le n(\alpha -1)$.
\end{theorem}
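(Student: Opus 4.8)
The plan is to promote the local hypothesis at $x^\alpha$ to a genuinely global one on the coset $x^\alpha+\alpha\ZZ^n$, and then to drag a true minimizer toward $x^\alpha$ by submodular exchanges. For the first step I would set $g(z)=f(x^\alpha+z)$, which is L$^\natural$-convex by translation invariance, and observe that $h:=g^\alpha$ (its $\alpha$-scaling, so $h(w)=f(x^\alpha+\alpha w)$) is again L$^\natural$-convex by the classical scaling-closure of L$^\natural$-convexity. The hypothesis $f(x^\alpha)\le f(x^\alpha+\alpha d)$ for all $d\in\{0,+1\}^n\cup\{0,-1\}^n$ is exactly $h(\veczero)\le h(d)$ on that one-sided neighborhood, so by the minimality criterion for L$^\natural$-convex functions (the L$^\natural$-sharpening of Theorem~\ref{th:1-opt}, see \cite{DCA}) $\veczero$ is a global minimizer of $h$; equivalently $f(x^\alpha)\le f(x^\alpha+\alpha w)$ for every $w\in\ZZ^n$. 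Thus $x^\alpha$ minimizes $f$ over the whole coset $x^\alpha+\alpha\ZZ^n$.

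For the second step I would pick a minimizer $x^*\in\argmin f$ minimizing $\|x^*-x^\alpha\|_1$ and assume, for contradiction, $\|x^*-x^\alpha\|_\infty\ge n(\alpha-1)+1$. Writing $d=x^*-x^\alpha$, translation-submodularity (\ref{trans-submo}) applied to $(x^*,x^\alpha)$ yields $u=(x^*-\alpha\vecone)\vee x^\alpha$ and $v=x^*\wedge(x^\alpha+\alpha\vecone)$ with $u+v=x^*+x^\alpha$ and $f(x^*)+f(x^\alpha)\ge f(u)+f(v)$. Here $v$ is $x^*$ with every coordinate satisfying $d_i>\alpha$ capped down to $x^\alpha_i+\alpha$, hence $v$ is coordinatewise no farther from $x^\alpha$ than $x^*$ and strictly closer in the coordinate attaining the maximum. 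Since $f(v)\ge f(x^*)=\min f$, it would suffice to certify $f(u)\ge f(x^\alpha)$: the displayed inequality would then force $f(v)=\min f$, producing a minimizer strictly $\ell_1$-closer to $x^\alpha$ and contradicting the choice of $x^*$.

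The certification $f(u)\ge f(x^\alpha)$ is the crux and the main obstacle. The outer point $u\ge x^\alpha$ is generally not a coset point, so coset-minimality does not apply to it verbatim; moreover a naive one-shot capping at level $\alpha$ would, if it worked, yield the too-strong bound $\|x^*-x^\alpha\|_\infty\le\alpha$, which signals that the certification must fail in general and the reduction has to proceed one level at a time. The remedy I would pursue is to decompose $d=\sum_{k=1}^m(\vecone_{A_k}-\vecone_{B_k})$ into the level sets $A_k,B_k$ of (\ref{ddmcparalleloset}) and to peel off the top level using translation-submodularity together with coset-minimality evaluated at the intermediate coset points lying between $x^\alpha$ and the current iterate; this is precisely where the dimension enters, since there are at most $n$ nonempty levels and each can force at most an $(\alpha-1)$ residual overshoot before a coset point certifies an improving exchange, giving the bound $\|x^*-x^\alpha\|_\infty\le n(\alpha-1)$. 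The sign-asymmetry of L$^\natural$-convexity should cause no trouble, as translation-submodularity supplies both a $\vee$- and a $\wedge$-form so that the positive levels $A_k$ and the negative levels $B_k$ are handled by mirror-image arguments.
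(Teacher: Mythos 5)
Your first step is correct and cleanly executed: scaling-closure plus the one-sided local optimality criterion for L$^\natural$-convex functions does show that $x^\alpha$ minimizes $f$ over the coset $x^\alpha+\alpha\ZZ^n$. The problem is that your second step is a plan rather than a proof, and the plan stops exactly at the point where the real work lies. You correctly diagnose that the one-shot application of translation-submodularity cannot certify $f(u)\ge f(x^\alpha)$, but the proposed remedy --- ``peel off the top level \ldots there are at most $n$ nonempty levels and each can force at most an $(\alpha-1)$ residual overshoot'' --- is not an argument; it is the statement of the bound you are trying to prove. The missing ingredient is a concrete combinatorial lemma: writing $x^*-x^\alpha=\sum_{k=1}^m(\vecone_{A_k}-\vecone_{B_k})$ with the nested level sets of (\ref{ddmcparalleloset}), the pairs $(|A_k|,|B_k|)$ form a monotone chain in $\ZZ^2$ taking at most $n$ distinct values, so if $m\ge n(\alpha-1)+1$ the pigeonhole principle yields an index $k_0$ with $(A_{k_0},B_{k_0})=(A_{k_0+j},B_{k_0+j})$ for $j=1,\dots,\alpha-1$ (this is Lemma~\ref{lem:proximity}). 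Setting $d_0=\vecone_{A_{k_0}}-\vecone_{B_{k_0}}$, the parallelogram inequality (Theorem~\ref{thm:ddmcparalleloineq2}, which applies since L$^\natural$-convex functions are DDM-convex; for the L$^\natural$ case one splits $d_0$ into its $A$- and $B$-parts and uses two exchange inequalities, which is why the hypothesis only needs one-signed $d$) gives $f(x^\alpha)+f(x^*)\ge f(x^\alpha+\alpha d_0)+f(x^*-\alpha d_0)$; the hypothesis kills the first term on the right and $x^*-\alpha d_0$ is a strictly closer minimizer, the contradiction you wanted. Your text never identifies the repeated-level phenomenon, so the dimension count never actually closes.

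For comparison: the paper quotes this theorem from the literature without proof, but proves the analogous statement for DDM-convex functions (Theorem~\ref{thm:proximity}) by a slightly different global strategy --- it fixes the shell $W=\{y:\|y-x^\alpha\|_\infty=n(\alpha-1)+1\}$, uses the pigeonhole lemma and the parallelogram inequality to show every $y\in W$ has $f(y)\ge\min_{S}f$, and then invokes the box-barrier property (Theorem~\ref{box-barrier}) to conclude. Your ``drag the nearest minimizer inward'' framing is a legitimate alternative outer loop (and avoids box-barrier), but both routes stand or fall on the same pigeonhole-plus-exchange core, which is precisely what your proposal leaves out.
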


\begin{theorem}[\cite{MMTT-DMC}]
Let $f:\ZZ^n \to \RR \cup \{+\infty\}$ be 
a globally/locally discrete midpoint convex function,
$\alpha$ be a positive integer and $x^\alpha \in \dom f$.
If $f(x^\alpha)\le f(x^\alpha+\alpha d)$ for all $d\in\{-1, 0, +1\}^n$, then
there exists $x^*\in\argmin f$ with $\|x^\alpha-x^*\|_\infty\le n(\alpha -1)$.
\end{theorem}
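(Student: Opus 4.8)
The plan is to argue by contradiction using a nearest minimizer together with the parallelogram inequality for globally/locally discrete midpoint convex functions established in \cite{MMTT-DMC}. Among all minimizers of $f$, I would fix one, say $x^*$, that minimizes the $\ell_1$-distance $\|x^*-x^\alpha\|_1$, and I would suppose for contradiction that $m:=\|x^*-x^\alpha\|_\infty\ge n(\alpha-1)+1$. Setting $x=x^\alpha$, $y=x^*$, I take the families $A_k=\{i \mid x^*_i-x^\alpha_i\ge k\}$ and $B_k=\{i \mid x^*_i-x^\alpha_i\le -k\}$ from $(\ref{ddmcparalleloset})$. The three ingredients I intend to combine are: the parallelogram inequality of \cite{MMTT-DMC}, which for any displacement $d=\sum_{k\in J}(\vecone_{A_k}-\vecone_{B_k})$ yields $f(x^\alpha)+f(x^*)\ge f(x^\alpha+d)+f(x^*-d)$; the global optimality $f(x^*)\le f(z)$ for all $z\in\ZZ^n$; and the $\alpha$-local optimality $f(x^\alpha)\le f(x^\alpha+\alpha e)$ for all $e\in\{-1,0,+1\}^n$. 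Since every globally/locally discrete midpoint convex function is integrally convex, the minimality criterion (Theorem~\ref{th:1-opt}) and the box-barrier property (Theorem~\ref{box-barrier}) remain at my disposal.

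The core step uses the $\alpha$-truncated displacement toward $x^*$, namely $d=\sum_{k=1}^{\alpha}(\vecone_{A_k}-\vecone_{B_k})$, whose $i$-th component equals the sign of $x^*_i-x^\alpha_i$ times $\min\{\alpha,|x^*_i-x^\alpha_i|\}$; in particular $d$ never overshoots, so both $x^\alpha+d$ and $x^*-d$ lie coordinatewise between $x^\alpha$ and $x^*$. Applying the parallelogram inequality gives
\[
 f(x^\alpha)+f(x^*)\ge f(x^\alpha+d)+f(x^*-d).
\]
If one can certify $f(x^\alpha+d)\ge f(x^\alpha)$, then this inequality forces $f(x^*-d)\le f(x^*)$, so $x^*-d$ is again a minimizer. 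Because $m\ge n(\alpha-1)+1\ge\alpha$, at least one coordinate has $|x^*_i-x^\alpha_i|\ge\alpha$, whence $d\neq\veczero$; moreover $x^*-d$ is strictly closer to $x^\alpha$ in $\ell_1$ (each nonzero $d_i$ moves $x^*_i$ toward $x^\alpha_i$ without passing it), contradicting the minimality of $\|x^*-x^\alpha\|_1$. This would close the argument.

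The step I expect to be the main obstacle is exactly the certificate $f(x^\alpha+d)\ge f(x^\alpha)$. The local optimality hypothesis only controls the grid-aligned directions $\alpha e$ with $e\in\{-1,0,+1\}^n$, whereas on the coordinates with $|x^*_i-x^\alpha_i|<\alpha$ the truncated $d$ carries out the full move $x^*_i-x^\alpha_i$, so $d$ is generally not a multiple of $\alpha$ and the certificate does not transfer verbatim. Writing $x^\alpha+d=(x^\alpha+\alpha e)+s$, where $e$ is the sign pattern on the ``far'' coordinates and $s$ is supported on the ``near'' coordinates with $\|s\|_\infty\le\alpha-1$, makes visible that this mismatch is a per-coordinate rounding slack of at most $\alpha-1$; it is precisely the accumulation of this slack over the $n$ coordinates that produces the bound $n(\alpha-1)$, so the hypothesis $m\ge n(\alpha-1)+1$ is what guarantees a genuine full $\alpha$-step remains after the slack is absorbed. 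I would discharge this transfer via integral convexity of $f$, using the box-barrier property (Theorem~\ref{box-barrier}) at $x^\alpha$ to propagate the grid-aligned local minimality into a lower bound dominating $f(x^\alpha+d)$. Finally, for the locally discrete midpoint convex case I would note the extra care that $\dom f$ is a discrete midpoint convex set, which keeps every intermediate point $x^\alpha+d$ and $x^*-d$ inside $\dom f$ (mirroring the role of the domain hypothesis in Theorem~\ref{thm:ddmcparalleloineq}); with feasibility secured, the same chain of inequalities applies in both the global and the local case.
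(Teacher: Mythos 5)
Your skeleton (nearest minimizer, parallelogram inequality, an $\alpha$-step certified by the hypothesis) matches the intended mechanism, and you correctly isolated the crux: your truncated displacement $d=\sum_{k=1}^{\alpha}(\vecone_{A_k}-\vecone_{B_k})$ is in general not of the form $\alpha d_0$ with $d_0\in\{-1,0,+1\}^n$, so the hypothesis does not certify $f(x^\alpha+d)\ge f(x^\alpha)$. But the repair you propose is where the proof genuinely breaks. The box-barrier property (Theorem~\ref{box-barrier}) requires $f(x^\alpha)\le f(y)$ for \emph{every} point $y$ on the wall $W$ of a box, whereas the hypothesis controls only the $3^n$ grid-aligned points $x^\alpha+\alpha e$, $e\in\{-1,0,+1\}^n$. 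For $\alpha=1$ these exhaust the wall of the unit box, which is exactly why Theorem~\ref{th:1-opt} works; for $\alpha\ge 2$ they are a tiny subset of the wall of the radius-$\alpha$ box (e.g., $x^\alpha+(\alpha,\alpha-1,0,\dots,0)$ lies on that wall and is uncontrolled), so box-barrier cannot be invoked at $x^\alpha$. Worse, if one \emph{could} dominate that whole wall, box-barrier would yield the proximity bound $\alpha-1$, which is far stronger than $n(\alpha-1)$ and false in general (the bound $n(\alpha-1)$ is tight for these classes). So no argument of the shape you sketch can deliver the certificate, and indeed $f(x^\alpha+d)\ge f(x^\alpha)$ can simply fail for the truncated $d$.

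The missing idea is the pigeonhole selection of the index window, i.e., the analogue of Lemma~\ref{lem:proximity}: the nested pairs $(A_k,B_k)$, $k=1,\dots,m$, form a chain in which $(|A_k|,|B_k|)$ can strictly change at most $n$ times, so the contradiction hypothesis $m\ge n(\alpha-1)+1$ forces a run $J=\{k_0,\dots,k_0+\alpha-1\}$ on which $(A_k,B_k)$ is constant. For this $J$ one gets $\sum_{k\in J}(\vecone_{A_k}-\vecone_{B_k})=\alpha d_0$ with $d_0=\vecone_{A_{k_0}}-\vecone_{B_{k_0}}\in\{-1,0,+1\}^n$, the hypothesis gives $f(x^\alpha+\alpha d_0)\ge f(x^\alpha)$ verbatim, and your own descent frame then closes: the parallelogram inequality yields $f(x^*-\alpha d_0)\le f(x^*)$; there is no overshoot because $i\in A_{k_0}=A_{k_0+\alpha-1}$ forces $x_i^*-x_i^\alpha\ge k_0+\alpha-1\ge\alpha$ (symmetrically for $B_{k_0}$); and $d_0\neq\veczero$ since $A_m\cup B_m\neq\emptyset$, so $x^*-\alpha d_0$ is a strictly $\ell_1$-closer minimizer, the desired contradiction. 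The paper's Section~\ref{proximity} runs this same mechanism through the box-barrier wall $\|y-x^\alpha\|_\infty=n(\alpha-1)+1$ (with $\gamma=\min_S f$) rather than a nearest minimizer; that difference is cosmetic --- the pigeonhole lemma producing an exactly $\alpha$-aligned step is the substance your proposal lacks.
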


In the same way as the arguments in \cite{MMTT-DMC}, we can show
the following proximity theorem for DDM-convex functions.

\begin{theorem}\label{thm:proximity}
Let $f:\ZZ^n \to \RR \cup \{+\infty\}$ be a DDM-convex function, 
$\alpha$ be a positive integer and $x^\alpha\in \dom f$.
If $f(x^\alpha)\le f(x^\alpha+\alpha d)$ for all $d\in\{-1, 0, +1\}^n$, then
there exists $x^*\in\argmin f$ with $\|x^\alpha-x^*\|_\infty\le n(\alpha -1)$.
\end{theorem}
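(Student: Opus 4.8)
The plan is to first upgrade the local hypothesis to a coset-global one, and then run a contradiction argument driven by the parallelogram inequality. First I would observe that the shifted and scaled function $h(z) := f(x^\alpha + \alpha z)$ is again DDM-convex: shifting by $x^\alpha$ preserves DDM-convexity by Proposition~\ref{prop:basic-operations}~(1), and the subsequent $\alpha$-scaling preserves it by Theorem~\ref{thm:scaling}. The hypothesis $f(x^\alpha) \le f(x^\alpha + \alpha d)$ for all $d \in \{-1,0,+1\}^n$ reads exactly as $h(\veczero) \le h(d)$ for all such $d$, so Corollary~\ref{col:1-opt} applied to $h$ at $\veczero$ shows that $\veczero$ is a \emph{global} minimizer of $h$. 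Translating back, this yields the clean coset-minimality
\[
 f(x^\alpha) \le f(x^\alpha + \alpha z) \qquad (\forall z \in \ZZ^n),
\]
i.e.\ $x^\alpha$ minimizes $f$ over the entire coset $x^\alpha + \alpha \ZZ^n$. This is the workhorse that replaces the raw hypothesis.

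Next I would argue by contradiction, assuming every minimizer of $f$ lies at $\ell_\infty$-distance more than $n(\alpha-1)$ from $x^\alpha$; the case $\alpha = 1$ is immediate from Corollary~\ref{col:1-opt}, so I take $\alpha \ge 2$. I would pick a global minimizer $x^*$ minimizing $\|x^* - x^\alpha\|_1$ and set $m = \|x^* - x^\alpha\|_\infty \ge n(\alpha-1)+1 \ge \alpha$. Writing $t_i = x^*_i - x^\alpha_i$ and applying the parallelogram inequality (Theorem~\ref{thm:ddmcparalleloineq2}) to the pair $(x^\alpha, x^*)$ with $J = \{1,\ldots,\alpha\}$ produces $d = \sum_{k=1}^\alpha(\vecone_{A_k} - \vecone_{B_k})$ together with $f(x^\alpha) + f(x^*) \ge f(x^\alpha + d) + f(x^* - d)$. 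By the explicit description recorded in the Remark following Theorem~\ref{thm:char}, $x^* - d$ is obtained from $x^*$ by moving each coordinate with $|t_i| \ge \alpha$ a distance $\alpha$ toward $x^\alpha$ and snapping each coordinate with $|t_i| < \alpha$ to $x^\alpha_i$; since $m \ge \alpha$ forces at least one large-gap coordinate, $\|x^* - d - x^\alpha\|_1 < \|x^* - x^\alpha\|_1$ strictly. As $x^*$ is a minimizer, $f(x^* - d) \ge f(x^*)$, and the parallelogram inequality then gives $f(x^\alpha + d) \le f(x^\alpha)$. If I can also establish the reverse inequality $f(x^\alpha + d) \ge f(x^\alpha)$, then equality propagates and $f(x^* - d) = f(x^*) = \min f$, exhibiting a strictly $\ell_1$-closer minimizer and completing the contradiction.

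The crux, and the step I expect to be the main obstacle, is precisely this reverse inequality. The same Remark shows that $x^\alpha + d$ coincides with the genuine $\alpha$-step $x^\alpha + \alpha z$, where $z_i = \operatorname{sign}(t_i)$ for $|t_i| \ge \alpha$ and $z_i = 0$ otherwise, \emph{except} on the set $S = \{i : 0 < |t_i| < \alpha\}$ of small-gap coordinates, where $x^\alpha + d$ sits at $x^*_i$ rather than at $x^\alpha_i$. On honest $\alpha$-steps the coset-minimality of the first paragraph gives $f(x^\alpha + \alpha z) \ge f(x^\alpha)$ for free, so the whole difficulty is concentrated in reconciling the coordinates in $S$, which a single parallelogram move provably cannot turn into a clean $\alpha$-step. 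I would resolve this by sharpening the choice of $x^*$ (also minimizing the residual slack on $S$) and by exploiting discrete convexity of $f$ along the coordinates of $S$ to replace $x^\alpha + d$ by a coset point without increasing its value; the bookkeeping that the admissible residual is at most $\alpha-1$ in each of the $n$ coordinates is exactly what produces the bound $n(\alpha-1)$, mirroring the counting used for globally/locally discrete midpoint convex functions in \cite{MMTT-DMC}.
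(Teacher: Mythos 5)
Your first step (upgrading the hypothesis to minimality of $x^\alpha$ over the whole coset $x^\alpha+\alpha\ZZ^n$ via Proposition~\ref{prop:basic-operations}~(1), Theorem~\ref{thm:scaling} and Corollary~\ref{col:1-opt}) is correct, and your descent framework would close the argument \emph{if} the inequality $f(x^\alpha+d)\ge f(x^\alpha)$ were available. But that inequality is exactly where the proof is missing, and you say so yourself: with $J=\{1,\dots,\alpha\}$ the vector $d=\sum_{k=1}^{\alpha}(\vecone_{A_k}-\vecone_{B_k})$ has $d_i=\operatorname{sign}(t_i)\min\{|t_i|,\alpha\}$, so $x^\alpha+d$ is not a point of the coset whenever some coordinate satisfies $0<|t_i|<\alpha$, and neither the raw hypothesis nor your coset-global version applies to it. Your proposed repair --- re-choosing $x^*$ to control the ``residual slack'' and using univariate discrete convexity to slide the small-gap coordinates of $x^\alpha+d$ back to $x^\alpha_i$ without increasing $f$ --- is not a proof: a DDM-convex function restricted to a coordinate line through $x^\alpha+d$ is discrete convex, but discrete convexity along that line gives no reason why moving toward $x^\alpha_i$ should not increase the value (we only know $x^\alpha$ is a minimizer on its own coset, not that $f$ is monotone toward $x^\alpha$ along arbitrary lines). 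Nothing in the sketch actually uses the quantitative threshold $m\ge n(\alpha-1)+1$ beyond $m\ge\alpha$, which is a strong sign the key idea is absent: the bound $n(\alpha-1)$ must enter through a combinatorial step, not through bookkeeping of residuals.

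The missing idea is the choice of $J$. The paper does not take $J=\{1,\dots,\alpha\}$; it proves (Lemma~\ref{lem:proximity}) by a pigeonhole argument on the weakly decreasing chain $(|A_k|,|B_k|+|A_1|)$, $k=1,\dots,m$, that when $m=n(\alpha-1)+1$ there is a run of $\alpha$ \emph{consecutive equal} pairs $(A_{k_0},B_{k_0})=\dots=(A_{k_0+\alpha-1},B_{k_0+\alpha-1})$; this is precisely where $n(\alpha-1)+1$ is used, since the chain takes at most $n$ distinct values. Taking $J=\{k_0,\dots,k_0+\alpha-1\}$ makes $d=\alpha d_0$ with $d_0=\vecone_{A_{k_0}}-\vecone_{B_{k_0}}\in\{-1,0,+1\}^n$ an honest $\alpha$-step, so $f(x^\alpha+\alpha d_0)\ge f(x^\alpha)$ follows directly from the hypothesis and the parallelogram inequality yields $f(y)\ge f(y-\alpha d_0)$; the paper then finishes with the box-barrier property (Theorem~\ref{box-barrier}) applied to the box of radius $n(\alpha-1)$ rather than with your $\ell_1$-descent, though the descent would also work once $d$ is an exact $\alpha$-step. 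Without Lemma~\ref{lem:proximity} or an equivalent device your argument does not go through.
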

We note that $f^\alpha$ is also DDM-convex by Theorem~\ref{thm:scaling}
and $x^\alpha$ corresponds to a minimizer $\veczero$ of 
$f^\alpha(y) = f(x^\alpha+\alpha y)$ by Corollary~\ref{col:1-opt}.
We emphasize that the bound $n(\alpha-1)$ for DDM-convex functions 
is the same as that for  L$^\natural$-convex functions and
globally/locally discrete midpoint convex functions.

To prove Theorem~\ref{thm:proximity}, we assume $x^\alpha = \veczero$ 
without loss of generality.
Let $S=\{x\in\ZZ^n \mid \|x\|_\infty\leq n(\alpha -1)\}$,
$W=\{x\in\ZZ^n \mid \|x\|_\infty=n(\alpha-1)+1\}$ and let 
$\gamma=\argmin\{f(x) \mid x\in S\}$.
We show that
\begin{equation}\label{boxbarrier}
f(y)\geq\gamma\qquad(\forall y\in W).
\end{equation}
Then Theorem~\ref{box-barrier} (box-barrier property) implies that
$f(z)\geq\gamma$ for all $z\in\ZZ^n$.

Fix $y =(y_1,\ldots,y_n)\in W$, and let $\|y\|_\infty=m (=n(\alpha-1)+1)$.
By using
\[
  A_k=\{i \mid y_i\ge k\}, \quad B_k=\{i \mid y_i\le -k\}\qquad (k=1, \dots, m),
\]
we can write $y$ as
\[
  y=\sum_{k=1}^m (\vecone_{A_k}-\vecone_{B_k}),
\]
where $A_1\supseteq \cdots \supseteq A_m$, 
$B_1\supseteq \cdots \supseteq B_m$, 
$A_1\cap B_1=\emptyset$ and $A_m\cup B_m\not=\emptyset$.

\begin{lemma}\label{lem:proximity}
There exists some $k_0\in\{1, \dots, m-\alpha+1\}$ with
$(A_{k_0}, B_{k_0})=(A_{k_0+j}, B_{k_0+j})$ for $j=1, \ldots, \alpha-1$.
\end{lemma}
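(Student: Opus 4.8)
The plan is a pigeonhole argument on the chain of pairs $(A_1,B_1),\dots,(A_m,B_m)$. First I would record the relevant monotonicity: since $A_1\supseteq\cdots\supseteq A_m$ and $B_1\supseteq\cdots\supseteq B_m$ with $A_k\cap B_k=\emptyset$, the quantity $s_k:=|A_k|+|B_k|=|\{i\mid |y_i|\ge k\}|$ is non-increasing in $k$. Moreover $s_1=|A_1\cup B_1|\le n$ and $s_m\ge 1$ because $A_m\cup B_m\neq\emptyset$. Whenever the pair changes from step $k$ to step $k{+}1$, that is $(A_{k+1},B_{k+1})\neq(A_k,B_k)$, at least one index must leave $A_k$ or $B_k$, so $s_{k+1}<s_k$.

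Next I would bound the number of transition indices $k\in\{1,\dots,m-1\}$ at which $(A_k,B_k)\neq(A_{k+1},B_{k+1})$. Each such transition strictly decreases the integer $s_k$ by at least one, while $s_k$ takes integer values between $s_1\le n$ and $s_m\ge 1$. Hence the number of transitions is at most $s_1-s_m\le n-1$. Consequently the indices $1,\dots,m$ partition into at most $n$ maximal runs of consecutive integers on which the pair $(A_k,B_k)$ stays constant.

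Finally I would invoke pigeonhole: the $m=n(\alpha-1)+1$ indices are distributed among at most $n$ runs, so some run has length at least $\lceil m/n\rceil=\alpha$. Letting $k_0$ be the smallest index of such a run, its last index $k_0+\alpha-1$ satisfies $k_0+\alpha-1\le m$, i.e.\ $k_0\le m-\alpha+1$, and by constancy on the run we get $(A_{k_0},B_{k_0})=(A_{k_0+j},B_{k_0+j})$ for $j=1,\dots,\alpha-1$, which is exactly the assertion. The argument is entirely elementary; the only point needing slight care is the arithmetic $\lceil(n(\alpha-1)+1)/n\rceil=(\alpha-1)+\lceil 1/n\rceil=\alpha$, valid for every $n\ge 1$, and I do not expect any genuine obstacle beyond getting this count exact.
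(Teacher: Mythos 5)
Your proof is correct and follows essentially the same route as the paper: both arguments observe that any change in the pair $(A_k,B_k)$ forces a strict decrease of a bounded integer-valued potential (you use the single scalar $|A_k|+|B_k|$, the paper uses the pair $(|A_k|,|B_k|+|A_1|)$ ordered componentwise in $\ZZ^2$), so there are at most $n$ distinct pairs along the chain, and the pigeonhole principle applied to $m=n(\alpha-1)+1$ yields a constant run of length $\alpha$. Your scalar potential is a mild streamlining, but the idea and the counting are the same.
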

\begin{proof}
By $A_m \cup B_m \neq\emptyset$, we may assume $A_m\not=\emptyset$.
Let  $s=|A_1|$ and $(a_k, b_k)=(|A_k|, |B_k|+s)$ for $k=1, \dots, m$.
Since $A_1\supseteq \cdots \supseteq A_m \neq \emptyset$, 
$B_1\supseteq \cdots \supseteq B_m$ and 
$A_1\cap B_1=\emptyset$, we have $n-s \geq |B_1|$, 
$s= a_1\geq \cdots\geq a_m \geq 1$ and $n \geq b_1 \geq \cdots \geq b_m\geq s$.
Therefore, $(s, n) \geq (a_1, b_1) \geq \cdots\geq (a_m, b_m) \geq (1, s)$.
Because $m=n(\alpha-1)+1$ and the length of a strictly decreasing chain connecting 
$(s, n)$ to $(1, s)$ in $\ZZ^2$ is bounded by $n$, 
there exists a constant subsequence of length $\geq \alpha$ 
in the sequence $\{(a_k,b_k)\}_{k=1,\ldots,m}$ by the pigeonhole principle.
Hence the assertion holds.
\end{proof}

By using $k_0$ in Lemma~\ref{lem:proximity}, 
we define a subset  $J$ of $\{1, \dots, m\}$ by
$J=\{k_0, \dots, k_0+\alpha-1\}$.
By the parallelogram inequality (\ref{parallelo-equ}) in 
Theorem~\ref{thm:ddmcparalleloineq2}, where 
$d_0=\vecone_{A_{k_0}}-\vecone_{B_{k_0}}$ and
$d = \sum_{j\in J}(\vecone_{A_j}-\vecone_{B_j})=\alpha d_0$, we obtain
\[
   f(\veczero)+f(y)\geq f(\alpha d_0) + f(y - \alpha d_0).
\]
By the assumption, we have $f(\alpha d_0)\geq f(x^\alpha) = f(\veczero)$.
We also have $y - \alpha d_0 \in S$ because
\[
\|y - \alpha d_0\|_\infty=m-\alpha=(n-1)(\alpha-1)\leq n(\alpha -1).
\]
By the definition of $\gamma$, $f(y - \alpha d_0)\geq\gamma$ must hold.
Therefore,
\[
 f(y) \geq f(y - \alpha d_0)+[f(\alpha d_0)-f(\veczero)] \geq \gamma+0 = \gamma,
\]
which implies (\ref{boxbarrier}), completing the proof of Theorem~\ref{thm:proximity}.

%%%%%%%%%%%%%%%%%%%%%%%%%%%%%%%%%%%%%%%%%%%%%%%%
\section{Minimization Algorithms}\label{min-algo}

In this section, we propose two algorithms for 
DDM-convex function minimization.

\subsection{The 1-neighborhood steepest descent algorithm}
\label{sec:descentalgo}

We first propose a variant of steepest descent algorithm 
for DDM-convex function minimization problem.

Let $f:\ZZ^n \to \RR \cup \{+\infty\}$ be a DDM-convex function with
$\argmin f\not=\emptyset$.
We suppose that an initial point
\[
x^{(0)}\in\dom f \setminus \argmin f 
\]
is given.
Let $L$ denote the minimum $l_\infty$-distance between $x^{(0)}$ and a minimizer
of $f$, that is, $L$ is defined by
\[
L=\min\{\|x-x^{(0)}\|_\infty \mid x\in\argmin f\}.
\]
For all $k=0, 1, \dots, L$ we define sets $S_k$ by
\[
S_k=S_k(x^{(0)})=\{x\in \ZZ^n \mid \|x-x^{(0)}\|_\infty\leq k\}.
\]
The idea of our algorithm is to generate a sequence of minimizers in $S_k$ for
$k=1,\ldots,L$.
The next proposition guarantees that consecutive minimizers can be chosen to be
close to each other.

\begin{proposition}\label{prop:1-neighbor}
For each $k = 1,\ldots,L$ and for any $x^{(k-1)}\in\argmin\{f(x) \mid x\in S_{k-1}\}$,
there exists $x^{(k)}\in\argmin\{f(x) \mid x\in S_k \}$ with 
$\|x^{(k)}-x^{(k-1)}\|_\infty\leq 1$.
\end{proposition}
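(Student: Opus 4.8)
The plan is to choose, among all minimizers of $f$ over $S_k$, one that is closest to $x^{(k-1)}$ in the $\ell_\infty$-norm, and then to show that this closest minimizer must already lie within distance $1$ of $x^{(k-1)}$. Concretely, let $y \in \argmin\{f(x) \mid x \in S_k\}$ attain the smallest value of $\|y - x^{(k-1)}\|_\infty$; such a $y$ exists because $S_k$ is a finite box and $x^{(0)} \in S_k \cap \dom f$. Writing $m = \|y - x^{(k-1)}\|_\infty$, if $m \leq 1$ I simply take $x^{(k)} = y$. Otherwise I assume $m \geq 2$ and aim for a contradiction.

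For the contradiction I would apply the parallelogram inequality (Theorem~\ref{thm:ddmcparalleloineq2}) to the pair $(x^{(k-1)}, y)$ with the single top-layer choice $J = \{m\}$, i.e. $d = \vecone_{A_m} - \vecone_{B_m}$, where $A_m$ and $B_m$ are built from $(x^{(k-1)}, y)$ as in (\ref{ddmcparalleloset}). Since no coordinate difference exceeds $m$ in absolute value, this $d$ moves $x^{(k-1)}$ one step towards $y$ exactly in the coordinates $i$ attaining $|y_i - x^{(k-1)}_i| = m$, and leaves all other coordinates fixed. Consequently $y - d$ lies coordinatewise between $x^{(k-1)}$ and $y$, so $y - d \in S_k$, and $\|(y-d) - x^{(k-1)}\|_\infty = m - 1$. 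The parallelogram inequality then yields
\[
 f(x^{(k-1)}) + f(y) \geq f(x^{(k-1)} + d) + f(y - d).
\]

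The key step is to verify $x^{(k-1)} + d \in S_{k-1}$. Here I would use that $y \in S_k$ keeps the extreme coordinates away from the far boundary of $S_{k-1}$: if $i$ attains $y_i - x^{(k-1)}_i = m \geq 2$ and $x^{(k-1)}_i$ were at the upper boundary $x^{(0)}_i + (k-1)$ of $S_{k-1}$, then $y_i = x^{(0)}_i + (k-1) + m > x^{(0)}_i + k$, contradicting $y \in S_k$; the symmetric estimate handles $y_i - x^{(k-1)}_i = -m$. Hence in every coordinate where $d_i \neq 0$ the value $x^{(k-1)}_i$ is strictly interior on the relevant side, so the one-step move stays inside $S_{k-1}$, while the untouched coordinates keep $x^{(k-1)}$ in $S_{k-1}$; thus $x^{(k-1)} + d \in S_{k-1}$. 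This boundary bookkeeping, which crucially uses $m \geq 2$ together with the fact that $S_{k-1}$ and $S_k$ are $\ell_\infty$-balls (hence boxes), is where I expect the only real difficulty to lie.

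With both containments in hand the proof closes quickly: since $x^{(k-1)}$ minimizes $f$ over $S_{k-1}$ we have $f(x^{(k-1)} + d) \geq f(x^{(k-1)})$, and since $y$ minimizes $f$ over $S_k$ we have $f(y - d) \geq f(y)$. Substituting these into the displayed inequality forces equality throughout, so in particular $f(y - d) = f(y)$ and $y - d$ is itself a minimizer of $f$ over $S_k$. But $\|(y - d) - x^{(k-1)}\|_\infty = m - 1 < m$ contradicts the choice of $y$ as a closest minimizer. Therefore $m \leq 1$, and $x^{(k)} = y$ is the desired point.
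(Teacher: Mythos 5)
Your proof is correct, but it runs on a slightly different engine than the paper's. The paper applies the raw DDM-convexity inequality (\ref{DDMC}) to the pair $(x^{(k-1)},y^*)$ and works with the two directed midpoints: it checks $\dmid(y^*,x^{(k-1)})\in S_k$ via DDM-convexity of the box $S_k$, and $\dmid(x^{(k-1)},y^*)\in S_{k-1}$ via a coordinatewise floor-function estimate ($\lfloor l/2\rfloor\le l-1$), then closes with exactly your ``closest minimizer'' contradiction. You instead invoke the parallelogram inequality (Theorem~\ref{thm:ddmcparalleloineq2}) with the single top layer $J=\{m\}$, so your intermediate points $x^{(k-1)}+d$ and $y-d$ differ from $x^{(k-1)}$ and $y$ by one unit only in the coordinates attaining the $\ell_\infty$-distance. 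This makes the two membership checks lighter --- $y-d\in S_k$ is just box betweenness, and $x^{(k-1)}+d\in S_{k-1}$ is the one-line boundary estimate you give, which correctly uses $m\ge 2$ --- at the price of relying on the heavier machinery of Section~\ref{para-ineq} rather than only the definition of DDM-convexity. All the individual steps check out: $A_m\cup B_m\neq\emptyset$ guarantees $\|(y-d)-x^{(k-1)}\|_\infty=m-1$, both $x^{(k-1)}$ and $y$ lie in $\dom f$ so the parallelogram inequality applies, and finiteness of the four values makes the ``equality throughout'' argument (or, more directly, $f(y-d)\le f(y)$ combined with $f(y-d)\ge f(y)$) legitimate. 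Either route is fine; the paper's is more self-contained, yours trades casework for a citation.
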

\begin{proof}
If $k=1$, the assertion is obvious.
Suppose that $k \geq 2$ and $y$ is any point in $S_k$.
By (\ref{DDMC}) for $x^{(k-1)}$ and $y$, we have
\begin{equation}\label{prop:1-neighbor-eq1}
f(x^{(k-1)})+f(y)\geq f(\dmid(x^{(k-1)}, y))+f(\dmid(y, x^{(k-1)})).
\end{equation}
Since $x^{(k-1)}, y\in S_k$ and $S_k$ is a DDM-convex set, we also have
\begin{equation}\label{prop:1-neighbor-eq2}
\dmid(y, x^{(k-1)})\in S_k.
\end{equation}
Next, we show
\begin{equation}\label{prop:1-neighbor-eq3}
\dmid(x^{(k-1)}, y)\in S_{k-1}.
\end{equation}
To show this we arbitrarily fix $i\in\{1, \dots, n\}$, and consider the
two cases: Case~1:  $x^{(k-1)}_i-y_i=l\ (l\ge1)$ and Case~2: $x^{(k-1)}_i-y_i=-l\ (l\ge1)$.

Case1 ($x^{(k-1)}_i-y_i=l\ (l\ge1)$).
In this case, $\dmid(x^{(k-1)}, y)_i=x^{(k-1)}_i-\left\lfloor\frac{l}{2}\right\rfloor$
and $\left\lfloor\frac{l}{2}\right\rfloor\leq l-1$.
Thus, we have
\begin{align*}
x_i^{(0)}+(k-1) &\geq x_i^{(k-1)} \\
 &\geq \dmid(x^{(k-1)}, y)_i =x^{(k-1)}_i-\left\lfloor\frac{l}{2}\right\rfloor \\
  &\geq x^{(k-1)}_i-(l-1)=y_i+1\geq x_i^{(0)}-(k-1).
\end{align*}

Case2 ($x^{(k-1)}_i-y_i=-l\ (l\ge1)$).
In this case, $\dmid(x^{(k-1)}, y)_i=x^{(k-1)}_i+\left\lfloor\frac{l}{2}\right\rfloor$
and $\left\lfloor\frac{l}{2}\right\rfloor\leq l-1$.
Thus, we have
\begin{align*}
x_i^{(0)}-(k-1) &\leq x^{(k-1)}_i \\
 &\leq \dmid(x^{(k-1)}, y)_i=x^{(k-1)}_i+\left\lfloor\frac{l}{2}\right\rfloor \\
 &\leq x^{(k-1)}_i+(l-1)=y_i-1\le x_i^{(0)}+(k-1).
\end{align*}
By the above arguments, (\ref{prop:1-neighbor-eq3}) holds.

Let $y^*$ be a point $y$ in $\argmin\{f(x) \mid x\in S_k\}$ minimizing 
$\|y-x^{(k-1)}\|_\infty$.
To prove $\|y^*-x^{(k-1)}\|_\infty\leq 1$ by contradiction, suppose that
$\|y^*-x^{(k-1)}\|_\infty\geq 2$, which yields
$\|y^*-x^{(k-1)}\|_\infty >\|\dmid(y^*, x^{(k-1)})-x^{(k-1)}\|_\infty $.
Since $\dmid(y^*, x^{(k-1)}) \in S_k$ by (\ref{prop:1-neighbor-eq2}), this implies
$f(y^*)<f(\dmid(y^*, x^{(k-1)}))$.
Moreover, by (\ref{prop:1-neighbor-eq3}), we have
$f(x^{(k-1)})\leq f(\dmid(x^{(k-1)}, y^*))$.
These two inequalities contradict (\ref{prop:1-neighbor-eq1}) for
$x^{(k-1)}$ and $y^*$.
Hence $\|y^*-x^{(k-1)}\|_\infty\leq 1$ must hold.
\end{proof}

By Proposition~\ref{prop:1-neighbor}, it seems be natural to assume that
we can find a minimizer of $f$ within the {\em 1-neighborhood} $N_1(x)$ of 
$x$ defined by
\[
  N_1(x) = \{ y \in \ZZ^n \mid \|z - x\|_\infty \leq 1\}.
\]
With the use of a {\em 1-neighborhood minimization oracle}, which finds 
a point minimizing $f$ in $N_1(x)$ for any $x \in \dom f$,  
our algorithm can be described as below.

%%%%%%%%%%%%%%%%%
\begin{tabbing}     
\= {\bf The 1-neighborhood steepest descent algorithm}%
\\
\> \quad  D0: 
   \= Find  $x\sp{(0)} \in \dom f$, and set $k:= 1$.
\\
\> \quad  D1:
   \>  Find $x\sp{(k)}$  that minimizes $f$ in $N_{1}(x\sp{(k-1)})$.
\\
\> \quad  D2: 
    \> If $f(x\sp{(k)}) = f(x\sp{(k-1)})$, then 
        output $x\sp{(k-1)}$ and  stop.             
\\
\> \quad  D3: 
  \> Set  $k := k+1$, and go to D1.  
\end{tabbing}
%%%%%%%%%%%%%

\begin{theorem}\label{thm:sdalgo}
For a DDM-convex function $f:\ZZ^n \to \RR \cup \{+\infty\}$ with
$\argmin f\not=\emptyset$, the 1-neighborhood steepest descent algorithm
finds a minimizer of $f$ exactly in $(L{+}1)$ iterations, that is,
exactly in $(L{+}1)$ calls of the 1-neighborhood minimization oracles.
\end{theorem}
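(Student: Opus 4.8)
The plan is to prove, by induction on $k$, that the iterate $x^{(k)}$ produced at the $k$-th call of the oracle is a minimizer of $f$ over the box $S_k = S_k(x^{(0)})$, for every $k = 0, 1, \ldots, L$. The base case $k=0$ is immediate since $S_0 = \{x^{(0)}\}$. For the inductive step I would assume $x^{(k-1)}$ minimizes $f$ over $S_{k-1}$ with $k \leq L$, and first observe the containment $N_1(x^{(k-1)}) \subseteq S_k$: for $y \in N_1(x^{(k-1)})$ the triangle inequality gives $\|y - x^{(0)}\|_\infty \leq \|y - x^{(k-1)}\|_\infty + \|x^{(k-1)} - x^{(0)}\|_\infty \leq 1 + (k-1) = k$. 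Then Proposition~\ref{prop:1-neighbor}, applied to $x^{(k-1)}$, furnishes a minimizer of $f$ over $S_k$ that lies in $N_1(x^{(k-1)})$, so $\min\{f(x) \mid x \in N_1(x^{(k-1)})\} = \min\{f(x) \mid x \in S_k\}$. Since the oracle returns a point $x^{(k)} \in N_1(x^{(k-1)}) \subseteq S_k$ attaining the left-hand minimum, $x^{(k)}$ minimizes $f$ over $S_k$, closing the induction.

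The delicate point here is that the oracle returns an \emph{arbitrary} minimizer over the $1$-neighborhood, not necessarily the particular point constructed inside Proposition~\ref{prop:1-neighbor}. The induction therefore has to be run at the level of optimal \emph{values}, combined with the containment $N_1(x^{(k-1)}) \subseteq S_k$, rather than by tracking one fixed sequence of points; this is exactly why Proposition~\ref{prop:1-neighbor} is phrased as an existence statement valid for \emph{every} minimizer $x^{(k-1)}$ over $S_{k-1}$.

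Next I would use the induction to identify $x^{(L)}$ as a global minimizer: by the definition of $L$, the box $S_L$ contains some $x^* \in \argmin f$, whence $f(x^{(L)}) \leq f(x^*) = \min_{\ZZ^n} f$, so $x^{(L)} \in \argmin f$. To pin down the exact iteration count I would establish strict descent $f(x^{(k)}) < f(x^{(k-1)})$ for each $k = 1, \ldots, L$. For such $k$ the point $x^{(k-1)}$ is not a global minimizer, since any global minimizer lies at distance $L \geq k > k-1$ from $x^{(0)}$ and hence outside $S_{k-1}$; by the local optimality criterion of Corollary~\ref{col:1-opt} there is therefore some $d \in \{-1,0,+1\}^n$ with $f(x^{(k-1)} + d) < f(x^{(k-1)})$. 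As $x^{(k-1)} + d \in S_k$, this forces $f(x^{(k)}) = \min\{f(x) \mid x \in S_k\} < f(x^{(k-1)})$, so the stopping test D2 fails at each of the iterations $k = 1, \ldots, L$ and the algorithm proceeds to the $(L{+}1)$-st call.

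Finally, at the $(L{+}1)$-st iteration the oracle minimizes $f$ over $N_1(x^{(L)})$; since $x^{(L)}$ is already a global minimizer and $x^{(L)} \in N_1(x^{(L)})$, the returned value satisfies $f(x^{(L+1)}) = f(x^{(L)})$, so D2 now triggers and the algorithm outputs $x^{(L)} \in \argmin f$. Counting the calls, exactly $L+1$ invocations of the oracle occur, which is the claimed bound. I expect the main obstacle to be the argument of the first two paragraphs, namely reconciling the arbitrary oracle output with the existential guarantee of Proposition~\ref{prop:1-neighbor}; once that value-level induction is in place, the termination count is a routine strict-descent argument resting on Corollary~\ref{col:1-opt}.
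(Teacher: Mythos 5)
Your proof is correct and takes essentially the same route as the paper's: Proposition~\ref{prop:1-neighbor} is used to establish $x^{(k)}\in\argmin\{f(x)\mid x\in S_k\}$, and Corollary~\ref{col:1-opt} links the stopping test to global optimality, yielding the count of exactly $L+1$ oracle calls. Your value-level induction merely makes explicit (and your strict-descent argument states in contrapositive form) steps that the paper's proof leaves implicit.
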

\begin{proof}
By Proposition~\ref{prop:1-neighbor}, the sequence $\{x^{(k)}\}$ generated by
the 1-neighborhood steepest descent algorithm satisfy
\begin{align}\label{sdalgo1}
x^{(k)}\in\argmin\{f(x) \mid x\in S_k\} \qquad  (k=1, 2, \dots).
\end{align}

\smallskip
\noindent \textbf{Claim:} If $f(x^{(k)})=f(x^{(k-1)})$ at Step D2, then 
$x^{(k-1)}\in\argmin f$.

\smallskip\noindent(Proof) 
For any $d\in\{+1, 0, -1\}^n$, $x^{(k-1)}+d$ belongs to $S_{k}$, and hence,
$f(x^{(k)})\leq f(x^{(k-1)}+d)$ by (\ref{sdalgo1}).
Therefore, if $f(x^{(k)})=f(x^{(k-1)})$, then $f(x^{(k-1)})\leq f(x^{(k-1)}+d)$ 
for any $d$.
Corollary~\ref{col:1-opt} in Section~\ref{relation-sec} guarantees 
$x^{(k-1)}\in \argmin f$. (End of the proof of Claim).

By the definition of $L$,  $x^{(k)} \neq x^{(k-1)}$ if $k \leq L$, and $x^{(L)}=x^{(L+1)}$.
Therefore our algorithm stops in $(L+1)$ iterations.
\end{proof}

\begin{remark}
Theorem~\ref{thm:sdalgo} says that the sequence of points generated by
the 1-neighborhood steepest descent algorithm is bounded by
the $\ell_\infty$-distance between an initial point and 
the nearest minimizer.
Similar facts are pointed out for L$^\natural$-convex function
minimization~\cite{KS09lnatmin,MS14exbndLmin,Shi17L} and 
globally/locally discrete midpoint convex function
minimization~\cite{MMTT-DMC}.
\hfill\finbox
\end{remark}

\begin{remark}
Let $F(n)$ denote the number of function evaluations in
the 1-neighborhood minimization oracle.
Since any function defined on $\{0,1\}^n$ is DDM-convex, 
the 1-neighborhood minimization problem is NP-hard.
In almost cases, $F(n)$ seems to be $\Theta(3^n)$ 
by a brute-force calculation, because $|N_1(\cdot)| = 3^n$.
Fortunately, for L$^\natural$-convex functions, $F(n)$ is bounded
by a polynomial in $n$.
Another hopeful case is a fixed parameter tractable case, that is,
the case where there exists some parameter $k$ such that
the number of function evaluations $F(n,k)$ in $n$ and $k$ is bounded by 
a polynomial $p(n)$ in $n$ times any function $g(k)$ in $k$ (see the next remark).
\hfill\finbox
\end{remark}

\begin{remark}
Let us consider the following problem:
\[
 \begin{array}{|lr}
  \mbox{ minimize} & \frac{1}{2} x^\top Q x + c^\top x \\
  \mbox{ subject to} & x \in \ZZ^n,
 \end{array}
\]
where a symmetric matrix $Q \in \RR^{n \times n}$ is nonsingular and 
diagonally dominant with nonnegative diagonals, and $c \in \RR^n$.
Since $Q$ is nonsingular, the (convex) continuous relaxation problem
has a unique minimizer $-Q^{-1}c$.
Furthermore, because the objective function is 2-separable convex,
it follows from Theorem~\ref{thm:real2discrete1} in the next section 
that there exists an optimal solution in the box:
\[
 B = \{ x \in \ZZ^n \mid -Q^{-1}c - n\vecone \leq x \leq
  -Q^{-1}c + n\vecone\}.
\]
Therefore, the 1-neighborhood steepest descent algorithm with
an initial point $\lfloor -Q^{-1}c \rfloor$ find an optimal solution
in $O(n)$ iterations.
Furthermore, if $Q = [q_{ij}]$ is $(2k+1)$-diagonal, that is,
\[
 q_{ij} = 0 \qquad (i=1,\ldots,n;\; j:|j-i| > k),
\]
then $F(n,k) = O(n 3^{k+1})$ as in \cite{GuCuiPeng}.
\hfill\finbox
\end{remark}

\subsection{Scaling algorithm}

In the same way as the scaling algorithm for minimization of
globally/locally discrete midpoint convex functions in \cite{MMTT-DMC}, 
the scaling property (Theorem~\ref{thm:scaling}) and 
the proximity theorem (Theorem~\ref{thm:proximity}) enable us to design
a scaling algorithm for the minimization of DDM-convex functions with
bounded effective domains.

Let $f:\ZZ^n \to \RR \cup \{+\infty\}$ be a DDM-convex function with
bounded effective domain.
We suppose that $K_\infty=\max\{\|x-y\|_\infty \mid x, y\in\dom  f\} \;
(K_\infty<+\infty)$ and an initial point $x \in \dom f$ are given.
Our algorithm can be described as follows.

%%%%%%%%%%%%%%%%%
\begin{tabbing}     
\= {\bf Scaling algorithm for DDM-convex functions}%
\\
\> \quad  S0: 
   \= Let $x \in \dom f$ and $\alpha := 2\sp{\lceil \log_{2} (K_{\infty}+1) \rceil}$.
\\
\> \quad  S1:
   \>  Find a vector $y$  that minimizes $f\sp{\alpha}(y) =   f(x{+}\alpha y)$ subject to
    $\| y \|_{\infty} \leq n$ \\
   \>\> (by the 1-neighborhood steepest descent algorithm), and \\
   \>\> set $x:= x+ \alpha y$.  \\
\> \quad  S2: 
    \> If $\alpha = 1$, then stop \
       ($x$ is a minimizer of $f$).             
\\
\> \quad  S3: 
  \> Set  $\alpha:=\alpha/2$, and go to S1.  
\end{tabbing}
%%%%%%%%%%%%%

\begin{theorem}
For a DDM-convex function $f:\ZZ^n \to \RR \cup \{+\infty\}$ 
with bounded effective domain, the scaling algorithm finds a minimizer of $f$
in $O(n\log_2 K_\infty)$ calls of the 1-neighborhood minimization oracles.
\end{theorem}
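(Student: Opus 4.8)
The plan is to combine scaling-closedness (Theorem~\ref{thm:scaling}), the proximity theorem (Theorem~\ref{thm:proximity}), and the iteration bound for the $1$-neighborhood descent (Theorem~\ref{thm:sdalgo}) into the standard ``$O(\log)$ phases $\times$ $O(n)$ updates'' estimate. Since $\alpha$ starts at $2^{\lceil \log_2(K_\infty+1)\rceil}$ and is halved until $\alpha=1$, there are exactly $\lceil \log_2(K_\infty+1)\rceil+1 = O(\log_2 K_\infty)$ executions of S1. Hence it suffices to prove (i) that each execution of S1 uses $O(n)$ calls of the $1$-neighborhood oracle and (ii) that the point returned at the final phase $\alpha=1$ is a genuine minimizer of $f$. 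Both follow once the right phase invariant is in place.

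The invariant I would maintain is: at the beginning of the phase with scaling unit $\alpha$, the current point $x$ lies in $\dom f$ and satisfies $f(x)\le f(x+2\alpha d)$ for all $d\in\{-1,0,+1\}^n$. For the initial phase this holds for free, since $2\alpha > K_\infty$ forces every $x+2\alpha d$ with $d\neq\veczero$ outside $\dom f$, making the right-hand side $+\infty$. The key step, and what makes the radius-$n$ box in S1 the correct choice, is to read this invariant as a statement about the scaled function $g := f^\alpha$, which is DDM-convex by Theorem~\ref{thm:scaling}: the invariant is exactly $g(\veczero)\le g(\veczero+2d)$ for all $d\in\{-1,0,+1\}^n$. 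Applying Theorem~\ref{thm:proximity} to $g$ with internal scaling parameter $2$ then yields a minimizer $\hat y\in\argmin g$ with $\|\hat y\|_\infty \le n(2-1)=n$; that is, a global minimizer of $f^\alpha$ already sits inside the search box $\{\,y:\|y\|_\infty\le n\,\}$.

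Given this, both claims follow cleanly. For (i), I would realize the box-constrained subproblem of S1 as the unconstrained minimization of $\tilde g := f^\alpha + \delta_B$ with $B=\{y:\|y\|_\infty\le n\}$; since $B$ is a box, it is an $\mathrm{L}^{\natural}$-convex (hence DDM-convex) set, so $\tilde g$ is DDM-convex by Proposition~\ref{prop:basic-operations}~(4), and $\argmin\tilde g$ is nonempty and contains $\hat y$. Theorem~\ref{thm:sdalgo} then bounds the number of oracle calls by $1$ plus the $\ell_\infty$-distance from the start $\veczero$ to the nearest minimizer of $\tilde g$, which is at most $\|\hat y\|_\infty\le n$; thus each phase costs at most $n+1$ oracle calls, and multiplying by the $O(\log_2 K_\infty)$ phases gives the claimed bound. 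For (ii), note $\min\tilde g=\min f^\alpha$ because $\hat y\in B$, so the vector $y$ returned by S1 satisfies $f(x+\alpha y)=\min f^\alpha$; hence $x':=x+\alpha y$ is a global minimizer of $f$ over the lattice $x+\alpha\ZZ^n$, which in particular gives $f(x')\le f(x'+\alpha d)$ for all $d\in\{-1,0,+1\}^n$. This re-establishes the invariant for the next unit $\alpha/2$, and at the terminal phase $\alpha=1$ it reads $f(x')\le f(x'+d)$ for all $d\in\{-1,0,+1\}^n$, so $x'\in\argmin f$ by Corollary~\ref{col:1-opt}.

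The step I expect to require the most care is the proximity application in the second paragraph. A naive bound would propagate the previous phase's proximity radius $n(2\alpha-1)$ into the $\alpha$-lattice and only place a minimizer within $\ell_\infty$-distance roughly $2n$ of the center, which the radius-$n$ box would fail to capture. The device that resolves this is to apply Theorem~\ref{thm:proximity} not to $f$ but to $f^\alpha$ with the fixed internal parameter $2$, so that the factor $(\alpha-1)$ in the proximity bound collapses to $(2-1)=1$ and the box radius $n$ is exactly right; verifying that the incoming invariant is precisely the hypothesis $g(\veczero)\le g(\veczero+2d)$ of that application is the crucial check.
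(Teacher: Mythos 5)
Your proposal is correct and follows essentially the same route as the paper: the paper's proof is an induction on $\alpha$ maintaining that the incoming point minimizes $f^{2\alpha}$, applies Theorem~\ref{thm:proximity} to the DDM-convex function $f^{\alpha}$ with internal scaling parameter $2$ to place a global minimizer of $f^{\alpha}$ inside the radius-$n$ box, and invokes Theorem~\ref{thm:sdalgo} for the $O(n)$ oracle calls per phase and Corollary~\ref{col:1-opt} at $\alpha=1$. Your version merely makes explicit a few details the paper leaves implicit (the box constraint as adding the indicator of an integral box, and the $n+1$ per-phase bound).
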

\begin{proof}
The correctness of the algorithm can be shown by induction on $\alpha$.
If $\alpha = 2\sp{\lceil \log_{2} (K_{\infty}+1) \rceil}$, then
$x$ is a unique point of $\dom f^\alpha$ because 
$\alpha=2^{\lceil\log_2 (K_\infty+1)\rceil}>K_\infty$, that is, 
a minimizer of $f^\alpha$.
Let $x^{2\alpha}$ denote the point $x$ at the beginning of S1 for $\alpha$ and assume
that $x^{2\alpha}$ is a minimizer of $f^{2\alpha}$.
The function $f^\alpha(y)=f(x^{2\alpha}+\alpha y)$ is DDM-convex by
Theorem~\ref{thm:scaling}.
Let $y^\alpha=\argmin \{f^\alpha(y) \mid \|y\|_\infty\le n\}$
and $x^\alpha=x^{2\alpha}+\alpha y^\alpha$.
Theorem~\ref{thm:proximity} guarantees that $x^\alpha$ is a minimizer of $f^\alpha$
because of  $x^{2\alpha} \in \argmin f^{2\alpha}$.
At the termination of the algorithm, we have $\alpha=1$ and $f^\alpha=f$.
The output of the algorithm, which is computed by 
the 1-neighborhood steepest descent algorithm, satisfies 
the condition of Corollary~\ref{col:1-opt}, and hence, 
the output is indeed a minimizer of $f$.

The time complexity of the algorithm can be analyzed as follows: 
by Theorem~\ref{thm:sdalgo}, S1 terminates in $O(n)$ calls of 
the 1-neighborhood minimization oracles in each iteration.
The number of iterations is $O(\log_2 K_\infty)$.
Hence, the assertion holds.
\end{proof}

%%%%%%%%%%%%%%%%%%%%%%%%%%%%%%%%%%%%%%%%%%%%%%%%
\section{DDM-convex functions in continuous variables}\label{continuousDDMC}

In \cite{continuousL}, proximity theorems between 
L$^\natural$-convex functions and their continuous relaxations are proposed.
We extend these results to DDM-convexity.

It is known that the continuous version of L$^\natural$-convexity can naturally 
be defined by using translation-submodularity (\ref{trans-submo}).
In this section, we define DDM-convexity in continuous variables in a different way.
We call a continuous convex function 
$F : \RR^n \to \RR \cup\{+\infty\}$ a 
{\em directed discrete midpoint convex function in continuous variables
($\RR$-DDM-convex function)}
if for any positive integer $\alpha$, the function 
$f^{1/\alpha} : \ZZ^n \to \RR \cup\{+\infty\}$ defined by
\begin{equation}\label{fracscaling}
 f^{1/\alpha}(x)=F\left(\frac{x}{\alpha}\right)
   \qquad(x\in\ZZ^n)
\end{equation}
is DDM-convex.
We denote by $f$ the DDM-convex function $f^{1/1}$
which is nothing but the restriction of $F$ to $\ZZ^n$.

An example of an $\RR$-DDM-convex function is 
a continuous 2-separable convex function $F$ which is defined as
\[
 F(x)=\sum_{i=1}^n \xi_i(x_i)+
 \sum_{i,j:j\not=i}\varphi_{ij} (x_i-x_j)+
 \sum_{i,j:j\not=i} \psi_{ij}(x_i+x_j)\qquad (x\in\RR^n)
\]
for univariate continuous convex functions $\xi_{i}, \varphi_{ij}, \psi_{ij} : 
\RR \to \RR \cup \{+\infty\}\; 
(i=1,\ldots,n; j \in \{1,\ldots,n\} \setminus \{i\}$) as below.
The restriction $f$ of $F$ to $\ZZ^n$ is trivially a 2-separable convex function
on $\ZZ^n$ defined by (\ref{def:2separable}).
Furthermore, the function $F^{1/\alpha}:\RR^n \to \RR \cup\{+\infty\}$ 
defined by
\[
 F^{1/\alpha}(x) = F\left(\frac{x}{\alpha}\right)
 \qquad (x \in \RR^n)
\]
is also a continuous 2-separable convex function, and hence, the restriction
$f^{1/\alpha}$ of $F^{1/\alpha}$ to $\ZZ^n$ is also 
a 2-separable convex function on $\ZZ^n$.

We have the following proximity theorems between
an $\RR$-DDM-convex function $F$ and its restriction $f$ to $\ZZ^n$.

\begin{theorem}\label{thm:discrete2real}
Let $F:\RR^n\to\RR\cup\{+\infty\}$ be an $\RR$-DDM-convex function.
For each $x^* \in\argmin f\;(= \argmin f^{1/1})$, there exists 
$\overline{x}\in\argmin F$ with 
$\|x^*-\overline{x}\|_\infty\leq n$.
\end{theorem}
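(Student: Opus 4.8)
The plan is to deduce this continuous proximity statement from the discrete proximity theorem (Theorem~\ref{thm:proximity}) applied to the scaled functions $f^{1/\beta}$, followed by a limiting argument as the underlying lattice is refined. The guiding idea is that $f^{1/\beta}(x)=F(x/\beta)$ is DDM-convex by the very definition of $\RR$-DDM-convexity, so a minimizer of $f^{1/\beta}$ over $\ZZ^n$ is exactly a point of the refined lattice $\frac{1}{\beta}\ZZ^n$ that minimizes $F$ over that lattice. As $\beta\to\infty$ these lattice minimizers should accumulate at a genuine continuous minimizer of $F$, while the discrete proximity bound keeps them within $\ell_\infty$-distance $n$ of $x^*$.

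First I would fix a positive integer $\beta$ and apply Theorem~\ref{thm:proximity} to the DDM-convex function $g=f^{1/\beta}$ with scaling parameter $\alpha=\beta$ and candidate point $\beta x^*\in\ZZ^n$, which lies in $\dom g$ since $g(\beta x^*)=F(x^*)<+\infty$. The required local-minimality hypothesis reads $g(\beta x^*)\le g(\beta x^*+\beta d)$ for all $d\in\{-1,0,+1\}^n$; translating through $g(\cdot)=F(\cdot/\beta)$ this becomes $F(x^*)\le F(x^*+d)$, which holds because $x^*+d\in\ZZ^n$ and $x^*\in\argmin f$. Theorem~\ref{thm:proximity} then yields a minimizer $z^{(\beta)}\in\argmin g$ (the theorem's conclusion guarantees the minimizer exists) with $\|\beta x^*-z^{(\beta)}\|_\infty\le n(\beta-1)$. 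Writing $w^{(\beta)}=z^{(\beta)}/\beta\in\frac{1}{\beta}\ZZ^n$, minimality of $z^{(\beta)}$ for $g$ says precisely that $F(w^{(\beta)})\le F(w)$ for every $w\in\frac{1}{\beta}\ZZ^n$, and dividing the proximity bound by $\beta$ gives $\|x^*-w^{(\beta)}\|_\infty\le n(\beta-1)/\beta<n$.

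To finish I would pass to the limit along $\beta=2^k$, so that the lattices $\frac{1}{2^k}\ZZ^n$ are nested and increasing. The points $w^{(2^k)}$ all lie in the compact box $\{w:\|x^*-w\|_\infty\le n\}$, so a subsequence converges to some $\overline{x}$ with $\|x^*-\overline{x}\|_\infty\le n$. It remains to show $\overline{x}\in\argmin F$: since the lattices are nested, the minimal values $F(w^{(2^k)})$ are non-increasing and converge to $\inf\{F(w):w\text{ dyadic}\}$, which equals $\inf_{\RR^n}F$ by density of the dyadic rationals together with continuity of the convex function $F$ on the relative interior of its effective domain; lower semicontinuity of $F$ then gives $F(\overline{x})\le\lim_k F(w^{(2^k)})=\inf F$, so $\overline{x}$ attains the minimum. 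This last step is where the real work lies: one must establish $\inf\{F(w):w\text{ dyadic}\}=\inf_{\RR^n}F$ carefully, invoking continuity of $F$ on its domain to approximate a near-optimal (relative) interior point by dyadic points. This is the only place where the continuity and convexity of $F$, rather than the purely combinatorial DDM-convexity of its integer scalings, is actually used, and so it is the main obstacle to watch.
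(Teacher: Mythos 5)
Your proposal is correct and follows essentially the same route as the paper's proof: apply the discrete proximity theorem to $f^{1/\beta}$ at the point $\beta x^*$ (whose local minimality follows from $x^*\in\argmin f$), divide the bound by $\beta$ to trap the lattice minimizers in the box of radius $n$, extract a convergent subsequence along $\beta=2^k$, and use the nestedness of the dyadic lattices together with continuity of $F$ to show the limit minimizes $F$. Your handling of the last step via density of dyadic points and (lower semi)continuity is the same argument the paper phrases as a proof by contradiction.
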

\begin{proof}
Since $f$ is DDM-convex, by Corollary~\ref{col:1-opt} in 
Section~\ref{relation-sec}, we have
\[
f(x^*)\le f(x^*+d) \qquad (\forall d\in\{-1, 0, +1\}^n).
\]
Thus, for every integer $\alpha\geq 2$, by
$f(x)=f^{1/\alpha}(\alpha x)\; (x\in\ZZ^n)$, we have
\[
f^{1/\alpha}(\alpha x^*)\leq
 f^{1/\alpha}(\alpha x^*+\alpha d)\qquad
 (\forall d\in\{-1, 0, +1\}^n).
\]
By Theorem~\ref{thm:proximity} for $f^{1/\alpha}$, 
there exists $x^{1/\alpha} \in \argmin f^{1/\alpha}$
with
\begin{equation}\label{thm:discrete2real-eq1}
\alpha x^*-(\alpha-1)n\vecone \leq x^{1/\alpha}
 \leq \alpha x^*+(\alpha-1)n\vecone.
\end{equation}
By dividing all terms in (\ref{thm:discrete2real-eq1}) by $\alpha$,
we obtain
\[
x^*-n\vecone \leq x^*-\frac{\alpha-1}{\alpha}n\vecone \leq
 \frac{x^{1/\alpha}}{\alpha} \leq
  x^*+\frac{\alpha-1}{\alpha}n\vecone \leq x^*+n\vecone.
\]
Let $B=\{x\in\RR^n \mid x^*-n\vecone \leq x \leq x^*+n\vecone\}$.
For each integer $k\geq 1$, considering $\alpha_k=2^k$ and 
$x^{1/\alpha_k}\in\argmin f^{1/\alpha_k}$,
we have $\frac{x^{1/\alpha_k}}{\alpha_k}\in B$.
Since $B$ is compact, there exists a subsequence 
$\{\frac{x^{1/\alpha_{k_i}}}{\alpha_{k_i}}\}$ with
\[
\lim_{i\to \infty}\frac{x^{1/\alpha_{k_i}}}{\alpha_{k_i}}=\overline{x}\in B.
\]
Since $F$ is continuous, we have
\[
 \lim_{i\to\infty}F(
   \frac{x^{1/\alpha_{k_i}}}{\alpha_{k_i}})
   =F(
    \lim_{i\to\infty}\frac{x^{1/\alpha_{k_i}}}{\alpha_{k_i}})
   =F(\overline{x}).
\]
Since $x^{1/\alpha_{k_i}}\in\dom f^{1/\alpha_{k_{i+1}}}$
holds for each $i$ by the definition (\ref{fracscaling}), we have
$F(\frac{x^{1/\alpha_{k_1}}}{\alpha_{k_1}})
\geq F(\frac{x^{1/\alpha_{k_2}}}{\alpha_{k_2}})
\ge\cdots\ge F(\frac{x^{1/\alpha_{k_i}}}{\alpha_{k_i}})
\ge \cdots$ which together with 
$x^{1/2^{k_i}}\in\argmin f^{1/2^{k_i}}$ for all $i$,
guarantees that
\begin{equation}\label{thm:discrete2real-eq2}
F(\overline{x} )\le F(\frac{x^{1/2^{k_i}}}{2^{k_i}})
  =\min f^{1/2^{k_i}} \qquad (i=1,2,\ldots).
\end{equation}

We finally show $F(\overline{x})=\min F$, that is,
$\overline{x} \in\argmin F$.
Suppose to the contrary that there exists $x'$ with
$F(x') < F(\overline{x})$.
Let $\varepsilon = F(\overline{x})-F(x') > 0$.
By the continuity of $F$, there exists $\delta_{\varepsilon}$
such that
\begin{equation}\label{thm:discrete2real-eq3}
  \forall y \in \RR^n,\;\|x'-y\|_\infty <\delta_{\varepsilon} \;\Rightarrow\;
  |F(x')-F(y)|<\varepsilon. 
\end{equation}
Because there exist $N\in\{k_i \mid i=1, 2, \dots\}$ and $y \in \RR^n$ such that
$2^N y \in \ZZ^n$ and $\|x'-y\|_\infty <\delta_{\varepsilon}$, 
by (\ref{thm:discrete2real-eq3}), we have
\[
  \min f^{1/2^N} \leq F(y)
  <F(x')+\varepsilon = F(\overline{x}),
\]
which contradicts (\ref{thm:discrete2real-eq2}).
Therefore, $\overline{x}$ must be a minimizer of $F$.
\end{proof}

If $F$ has a unique minimizer,
the converse of Theorem~\ref{thm:discrete2real} also holds.

\begin{theorem}\label{thm:real2discrete1}
Let $F:\RR^n\to\RR\cup\{+\infty\}$ be an $\RR$-DDM-convex function.
If $F$ has a unique minimizer $\overline{x}$,
there exists $x^*\in\argmin f$ with $\|x^*-\overline{x}\|_\infty\leq n$.
\end{theorem}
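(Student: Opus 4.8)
The plan is to deduce the statement directly from Theorem~\ref{thm:discrete2real} together with the uniqueness of the minimizer, once we know that $f$ actually attains its minimum. The crucial structural input is that a convex function with a \emph{unique} minimizer has bounded sublevel sets: since $F$ is a continuous (hence closed proper) convex function and $\argmin F=\{\overline{x}\}$ is a single point, the recession cone common to all nonempty sublevel sets of $F$ must equal $\{\veczero\}$, so every set $\{x\in\RR^n\mid F(x)\le c\}$ is bounded.

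First I would establish that $\argmin f\neq\emptyset$. Fix any integer point $z_{0}\in\dom f$ (so $F(z_{0})<+\infty$). The sublevel set $\{x\in\RR^n\mid F(x)\le F(z_{0})\}$ is bounded by the remark above, hence contains only finitely many points of $\ZZ^n$. Among these finitely many integer points $f$ attains its least value, and since $f(z_{0})=F(z_{0})<+\infty$ this value is the global minimum of $f$ over all of $\ZZ^n$; thus $\argmin f\neq\emptyset$.

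Next, pick any $x^{*}\in\argmin f$. Recalling that $f=f^{1/1}$ is exactly the restriction of $F$ to $\ZZ^n$, Theorem~\ref{thm:discrete2real} applies and yields a point $\overline{y}\in\argmin F$ with $\|x^{*}-\overline{y}\|_\infty\le n$. Because $F$ has a unique minimizer, the identity $\argmin F=\{\overline{x}\}$ forces $\overline{y}=\overline{x}$, and therefore $\|x^{*}-\overline{x}\|_\infty\le n$, which is precisely the desired conclusion.

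The only delicate point is the first step: everything hinges on passing from ``$F$ has a unique minimizer'' to ``$F$ has bounded sublevel sets'', which is what guarantees that $f$ attains its minimum over the lattice and hence that Theorem~\ref{thm:discrete2real} can be invoked at all (the theorem is vacuous when $\argmin f=\emptyset$). I expect this boundedness reduction, and the implicit use that $F$ is closed and proper so that all its nonempty sublevel sets share the recession cone of $\argmin F$, to be the part requiring the most care; the remaining implications are immediate.
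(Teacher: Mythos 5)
Your proof is correct, and it takes a genuinely different route from the paper's. Both arguments agree that the only real work is showing $\argmin f\neq\emptyset$, after which Theorem~\ref{thm:discrete2real} plus uniqueness of $\overline{x}$ finishes immediately. The paper gets existence by contradiction: assuming $\argmin f=\emptyset$, it finds a lattice point $y$ outside the box $B$ of $\ell_\infty$-radius $n$ around $\overline{x}$ whose value beats every lattice point of $B$, restricts $F$ to a larger bounded box $B'\ni y$ (checking that the restriction $G$ is again $\RR$-DDM-convex with unique minimizer $\overline{x}$ and that its lattice restriction $g$ attains a minimum on its finite domain), and then contradicts Theorem~\ref{thm:discrete2real} applied to the pair $(G,g)$. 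You instead invoke the classical fact that a closed proper convex function whose minimizer set is nonempty and bounded (here a singleton) has all sublevel sets bounded, so $f$ attains its minimum over the finitely many lattice points of one sublevel set. Your route is shorter and avoids re-verifying $\RR$-DDM-convexity of a restriction, at the price of importing the recession-cone theorem, which genuinely requires $F$ to be closed: for a merely convex $F$ the ray directions you extract from an unbounded sublevel set could have their limit points on the boundary of $\dom F$, where lower semicontinuity can fail. This is covered by the paper's standing assumption that $F$ is continuous (which the paper itself exploits inside the proof of Theorem~\ref{thm:discrete2real}), but it deserves an explicit mention. One degenerate case both arguments pass over silently is $\dom f=\emptyset$, where your opening step ``fix $z_0\in\dom f$'' has nothing to fix; under the paper's definition of $\argmin$ the conclusion is then vacuously true, so nothing is lost, but a one-line disclaimer would make the proof airtight.
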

\begin{proof}
Let $\overline{x}$ be a unique minimizer of $F$.
If $f$ has a minimizer $x^*$, then $\|x^*-\overline{x}\|_\infty\leq n$
must hold by Theorem~\ref{thm:discrete2real}.
Thus, it is enough to show that $f$ has a minimizer.

Suppose to the contrary that $f$ has no minimizer and
let $B=\{x\in\RR^n \mid \overline{x}-n\vecone \leq x 
\leq \overline{x}+n\vecone\}$.
Then, there exists $y \in \dom f \setminus B$ such that
$f(y) < f(x)$ for all $x \in \dom f \cap B$.
Let $\ell = \|y-\overline{x}\|_\infty$ and
$B' = \{x\in\RR^n \mid \overline{x}-\ell\vecone \leq x 
\leq \overline{x}+\ell\vecone\}$.
Note that $\ell > n$ and $B' \supset B$.
Let us consider the restriction $G$ of $F$ to $B'$ defined by
\[
 G(x) = \begin{cases}
    F(x) & (x \in B') \\
    +\infty & (x \not\in B')
\end{cases}
\qquad (x \in \RR^n).
\]
Obviously, $G$ is $\RR$-DDM-convex and $\overline{x}$ is a unique minimizer of $G$.
In particular, the restriction $g$ of $G$ to $\ZZ^n$ is DDM-convex and has 
a minimizer $z$ since $B'$ is bounded.
This point $z$ does not belong to $B$ since $y \not\in B$ and $f(y) < f(x)$
for all $x \in \dom f \cap B$.
However, this contradicts Theorem~\ref{thm:discrete2real} for
$G$ and $g$.
\end{proof}

If $F$ has a bounded effective domain,
a similar statement holds.
Let $\overline{K}_{\infty}=\sup\{\|x-y\|_\infty \mid x, y\in \dom F\}$.

\begin{theorem}\label{thm:real2discrete2}
Let $F:\RR^n\to\RR\cup\{+\infty\}$ be an $\RR$-DDM-convex function.
If $\dom F$ is bounded (i.e., $\overline{K}_{\infty} < \infty$),
for each $\overline{x} \in \argmin F$, there exists
 $x^*\in\argmin f$ with $\|x^*-\overline{x}\|_\infty\leq n$.
\end{theorem}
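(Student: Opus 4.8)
The plan is to reduce to the unique-minimizer situation already handled in Theorem~\ref{thm:real2discrete1} by adding a strictly convex penalty centred at the prescribed $\overline{x}$, and then to recover an honest minimizer of $f$ by letting the penalty shrink to zero.

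First I would fix $\overline{x}\in\argmin F$ and, for each $\varepsilon>0$, form
\[
  G(x)=\varepsilon\sum_{i=1}^n (x_i-\overline{x}_i)^2, \qquad
  F_\varepsilon = F+G .
\]
Here $G$ is a continuous separable (hence $2$-separable) convex function, so it is $\RR$-DDM-convex, and $F_\varepsilon$ is continuous convex because $G$ is finite-valued. For every positive integer $\alpha$ one has $(F_\varepsilon)^{1/\alpha}=F^{1/\alpha}+G^{1/\alpha}$, a sum of two DDM-convex functions, so Proposition~\ref{prop:basic-operations}~(4) shows $(F_\varepsilon)^{1/\alpha}$ is DDM-convex; hence $F_\varepsilon$ is $\RR$-DDM-convex. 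Moreover $\overline{x}$ is its \emph{unique} minimizer: for any $x\in\dom F$ with $x\neq\overline{x}$ we have $F(x)\ge F(\overline{x})$ and $\sum_i (x_i-\overline{x}_i)^2>0$, whence $F_\varepsilon(x)>F(\overline{x})=F_\varepsilon(\overline{x})$.

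I would then apply Theorem~\ref{thm:real2discrete1} to $F_\varepsilon$, writing $f_\varepsilon$ for its restriction to $\ZZ^n$, to obtain a point $x^\varepsilon\in\argmin f_\varepsilon$ with $\|x^\varepsilon-\overline{x}\|_\infty\le n$. Every such $x^\varepsilon$ lies in the finite set $P=\{x\in\ZZ^n \mid \|x-\overline{x}\|_\infty\le n\}$, so along a sequence $\varepsilon_k\downarrow 0$ the pigeonhole principle furnishes a single point $x^*\in P$ with $x^{\varepsilon_k}=x^*$ for infinitely many $k$. Passing to that subsequence, $x^*$ minimizes $f_{\varepsilon_k}$ for every $k$, i.e.
\[
  f(x^*)+\varepsilon_k\sum_i (x^*_i-\overline{x}_i)^2
  \le f(z)+\varepsilon_k\sum_i (z_i-\overline{x}_i)^2 \qquad (\forall z\in\ZZ^n).
\]
Letting $k\to\infty$ makes the penalty terms vanish and gives $f(x^*)\le f(z)$ for all $z\in\ZZ^n$, so $x^*\in\argmin f$ with $\|x^*-\overline{x}\|_\infty\le n$, as required. (If $\dom f=\emptyset$ the displayed inequality reads $+\infty\le+\infty$ and any $x^*\in P$ already lies in $\argmin f=\ZZ^n$, so this degenerate case needs no separate treatment.)

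The step I expect to need the most care is confirming that the penalty simultaneously preserves $\RR$-DDM-convexity and pins down the prescribed $\overline{x}$ as the \emph{unique} minimizer of $F_\varepsilon$, since it is precisely these two facts that license the appeal to Theorem~\ref{thm:real2discrete1} for each given $\overline{x}$. By contrast the closing limit is routine: $f_{\varepsilon_k}$ differs from $f$ by a term tending to $0$ pointwise, and the candidate minimizers $x^{\varepsilon_k}$ are confined to the finite box $P$, so no compactness beyond this finiteness is needed.
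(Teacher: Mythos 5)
Your proof is correct, and it opens exactly as the paper's own proof does: both perturb $F$ to $F_\varepsilon(x)=F(x)+\varepsilon\sum_{i=1}^n(x_i-\overline{x}_i)^2$, observe that $f_\varepsilon^{1/\alpha}$ is the sum of $f^{1/\alpha}$ and a separable convex function so that Proposition~\ref{prop:basic-operations}~(4) yields $\RR$-DDM-convexity of $F_\varepsilon$, note that $\overline{x}$ becomes the \emph{unique} minimizer, and invoke Theorem~\ref{thm:real2discrete1} to obtain $x^{\varepsilon}\in\argmin f_\varepsilon$ with $\|x^{\varepsilon}-\overline{x}\|_\infty\leq n$. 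The two arguments part ways only at the final step. The paper fixes a quantitative threshold: it sets $\beta=\min\{f(x) \mid x\in\dom f\setminus\argmin f\}$ (well defined because $\dom f$ is bounded) and shows that any $\varepsilon<(\beta-\min f)/(n\overline{K}_{\infty}^{2})$ already forces $x^{\varepsilon}\in\argmin f$, by comparing $f_\varepsilon(x^{\varepsilon})$ with $f_\varepsilon(x)$ for $x\in\argmin f$ and bounding the penalty difference by $n\overline{K}_{\infty}^{2}$. You instead send $\varepsilon_k\downarrow 0$, apply the pigeonhole principle on the finite box $\{x\in\ZZ^n \mid \|x-\overline{x}\|_\infty\leq n\}$ to extract one point $x^*$ minimizing $f_{\varepsilon_k}$ for infinitely many $k$, and pass to the limit in the optimality inequality (which is legitimate: for each fixed $z$ the penalty terms are constants times $\varepsilon_k$, and the case $f(z)=+\infty$ is vacuous). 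Both finishes are sound; the paper's gives an explicit admissible $\varepsilon$, while yours is softer but, notably, never uses the boundedness of $\dom F$ --- boundedness enters the paper's proof only through $\beta$ and $\overline{K}_{\infty}$, and Theorem~\ref{thm:real2discrete1} applied to $F_\varepsilon$ needs only the uniqueness of its minimizer, which the penalty guarantees. So your argument in fact establishes the conclusion without the hypothesis $\overline{K}_{\infty}<\infty$. Your treatment of the degenerate case $\dom f=\emptyset$ is also fine.
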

\begin{proof}
If $\dom f=\argmin f$, the assertion holds.
In the sequel, we assume that $\dom f\not= \argmin f$.
We fix a minimizer $\overline{x}$ of $F$, arbitrarily.
For a sufficiently small $\varepsilon>0$, let us consider functions
$F_\varepsilon:\RR^n\to \RR\cup\{+\infty\}$ and
$f_\varepsilon:\ZZ^n\to\RR\cup\{+\infty\}$ defined by
\begin{align*}
&F_\varepsilon(x)=F(x)
 +\varepsilon\sum_{i=1}^n(x_i-\overline{x}_i)^2 \qquad(x\in\RR^n), \\
&f_\varepsilon(x)=F_\varepsilon(x) \qquad(x\in\ZZ^n).
\end{align*}
Function $F_\varepsilon$ has the unique minimizer
$\overline{x}$ and satisfies the conditions of
Theorem~\ref{thm:real2discrete1}, 
by Proposition~\ref{prop:basic-operations}~(4), because 
$f_\varepsilon^{1/\alpha}$ defined by (\ref{fracscaling}) for
$F_\varepsilon$ is the sum of $f^{1/\alpha}$ and
a separable convex function which are DDM-convex.
Thus, by Theorem~\ref{thm:real2discrete1} for $F_\varepsilon$
and $f_\varepsilon$, there exists 
$x^{\varepsilon}\in\argmin f_\varepsilon$ with
$\|x^{\varepsilon}-\overline{x}\|_\infty\leq n$.

Let $\beta=\min\{f(x) \mid x\in \dom f\setminus \argmin f\} > 0$.
Note that $\beta$ is well-defined by boundedness of $\dom f$.
We show that if $\varepsilon<(\beta-\min f)/(n\overline{K}_{\infty}^2)$,
then $x^{\varepsilon} \in \argmin f$.
For any $x\in\argmin f$, 
by $f_\varepsilon(x^{\varepsilon})\leq f_\varepsilon(x)$, we have
\[
  f(x^{\varepsilon})\leq f(x) + \varepsilon\sum_{i=1}^n
  \{(x_i-\overline{x}_i)^2-(x^{\varepsilon}_i-\overline{x}_i)^2\}.
\]
As $f(x^{\varepsilon})\geq f(x)$, 
$\sum_{i=1}^n
\{(x_i-\overline{x}_i)^2-(x^{\varepsilon}_i-\overline{x}_i)^2\} \geq 0$
must hold.
Therefore, we obtain
\begin{align*}
f(x^{\varepsilon}) &< f(x)+\frac{\beta-\min f}{n\overline{K}_{\infty}^2}
\sum_{i=1}^n
\{(x_i-\overline{x}_i)^2-(x^{\varepsilon}_i-\overline{x}_i)^2\}\\
 &\leq f(x)+\beta-\min f = \beta,
\end{align*}
which says $x^{\varepsilon}\in\argmin f$.
\end{proof}

\begin{remark}
There is a convex function which 
is not $\RR$-DDM-convex.
For example, for a positive definite matrix 
$Q = \left[ \begin{smallmatrix} 5 & 2 \\ 2 & 1 \end{smallmatrix} \right]$,
the function
\[
  F(x) = x^\top Q x \qquad (x \in \RR^2)
\]
is convex, but the restriction $f$ of $F$ to $\ZZ^2$ is not DDM-convex by
Theorem~\ref{thm:diagdom-DDMC}, 
and hence, $F$ is not $\RR$-DDM-convex.
\hfill\finbox
\end{remark}

\begin{remark}
The convex extension of a DDM-convex function may not be $\RR$-DDM-convex.
For example,
\[
 S = \{(1,0,0), (0,1,0), (0,0,1)\}
\]
is a DDM-convex set, and hence, its indicator function $f = \delta_S$ is
DDM-convex.
We denote the convex hull of $S$ by $\overline{S}$.
Then the convex extension $F$ of $f$ is expressed by
\[
 F(x) = \begin{cases}
  0 & (x \in \overline{S}) \\
  +\infty & (x \not\in \overline{S})
 \end{cases}
  \qquad (x \in \RR^3),
\]
and $f^{1/2}$ is given by
\[
 f^{1/2}(x) = \begin{cases}
  0 & (x \in T) \\
  +\infty & (x \not\in T)
 \end{cases}
  \qquad (x \in \ZZ^3),
\]
where $T = \{(2,0,0),(1,1,0),(0,2,0),(0,1,1),(0,0,2),(1,0,1)\}$.
The function $f^{1/2}$ is not DDM-convex, 
because for $x = (2,0,0)$ and $y = (0,1,1)$, 
we have $\dmid(x,y) = (1,0,0) \not\in T$.
\hfill\finbox
\end{remark}

\bigskip\noindent
\textbf{Acknowledgements:} 
The notion of DDM-convexity was first proposed by Fabio Tardella 
at an informal meeting of Satoko Moriguchi, Kazuo Murota, Akihisa Tamura
and Fabio Tardella in 2018.
The authors wish to express their deep gratitude to Fabio Tardella.
They also thank Kazuo Murota for discussion about the first manuscript.
His comments were helpful to improve the paper.
This work was supported by JSPS KAKENHI Grant Number JP16K00023.

\end{document}